\documentclass[a4paper,12pt]{article}

\usepackage{amsmath,amsfonts,amssymb}
\usepackage{amsthm}
\usepackage{abstract}
\usepackage{color,multicol}
\usepackage{appendix}
\usepackage{enumerate}
\usepackage[shortlabels]{enumitem}

\newtheorem{lemma}{Lemma}[section]
\newtheorem{theorem}[lemma]{Theorem}

\newtheorem{remark}[lemma]{Remark}

\parindent=0pt
\usepackage{tocloft}

\setlength\cftparskip{-2pt}
\setlength\cftbeforesecskip{36pt}
\setlength\cftaftertoctitleskip{36pt}

\begin{document}
\begin{titlepage}
\begin{center}
\vspace*{1in}
{\Large \textbf{Shamir-Duduchava factorization of elliptic symbols}}
\vspace{10mm}
\par
{\textit{Dedicated to Roland Duduchava on the occasion of his 70th birthday.}}
\par
\vspace{1.5in}
{Tony Hill}
\par
\vfill
\par
\vspace{0.5in}
Department of Natural and Mathematical Sciences
\par
\vspace{0in}
King's College London
\par
\vspace{0.5in}
May 2015
\end{center}
\end{titlepage}
\renewcommand{\abstractnamefont}{\normalfont\large\bfseries}
\renewcommand{\abstracttextfont}{\normalfont\normalsize}
\begin{abstract}
This paper considers the factorization of elliptic symbols which can be represented by matrix-valued functions. Our starting point is a \textit{Fundamental Factorization Theorem},  due to Budjanu and Gohberg \cite{BG1}. We critically examine the work of Shamir \cite{Sh}, together with some corrections and improvements as proposed by Duduchava \cite{Du}.  As an integral part of this work, we give a new and detailed proof that certain sub-algebras of the Wiener algebra on the real line satisfy a sufficient condition for right standard factorization. Moreover, assuming only the Fundamental Factorization Theorem, we provide a complete proof of an important result from Shargorodsky \cite{Shar}, on the factorization of an elliptic homogeneous matrix-valued function, useful in the context of the inversion of elliptic systems of multidimensional singular integral operators in a half-space.
\end{abstract}
\newpage
%
%
%
\newpage
\tableofcontents
\newpage
\section{Introduction}
This paper considers the factorization of elliptic symbols which can be represented by matrix-valued functions. Our starting point is a \textit{Fundamental Factorization Theorem},  due to Budjanu and Gohberg \cite{BG1}. We critically examine the work of Shamir \cite{Sh}, together with some corrections and improvements as proposed by Duduchava \cite{Du}. We shall call the combined efforts of these two latter authors the \textit{Shamir-Duduchava} factorization method.  \\

One important application of the Shamir-Duduchava factorization method has been given by Shargorodsky \cite{Shar}. Our primary goal is to provide, in a single place, a complete proof of Shargorodsky's result on the factorization of a matrix-valued elliptic symbol, assuming only the Fundamental Factorization Theorem. As an integral part of this work, we will give a new and detailed proof that certain sub-algebras of the Wiener algebra on the real line satisfy a sufficient condition for right standard factorization.  

\section{Background}
Let $\Gamma$ denote a simple closed smooth contour dividing the complex plane into two regions $D_+$ and $D_-$, where for a bounded contour we identify $D_+$ with the domain contained within $\Gamma$. We shall be especially interested in the case where $\Gamma = \dot{\mathbb{R}}$, the one point compactification of the real line.  In this situation, of course, $D_\pm$ are simply the upper and lower half-planes respectively. We let $G_\pm$ denote the union $D_\pm \cup \Gamma$. 

\subsection{Factorization}
Suppose we are given a nonsingular matrix-valued function $A(\zeta) = \big ( a_{jk}(\zeta) )^N_{j,k=1}$, then we define a $\textit{right standard factorization}$ or simply the $\textit{factorization}$ as a representation of the form
\begin{equation} \label{eq:rightfact}
A(\zeta) = A_-(\zeta) D(\zeta) A_+(\zeta) \qquad (\zeta \in \Gamma),
\end{equation}
where $D(\zeta)$ is strictly diagonal with non-zero elements $d_{jj} = ( (\zeta -\lambda^+) / (\zeta -\lambda^-) )^{ \kappa_j }$ for $j=1, \dots, N$. The exponents $\kappa_1 \geq \kappa_2 \geq \dots \geq \kappa_N$ are integers and $\lambda^\pm$ are certain fixed points chosen in $D_\pm$ respectively. (In passing, we note that if $\Gamma = \dot{\mathbb{R}}$, it is customary to take $\lambda^\pm = \pm i$.) $A_\pm(\zeta)$ are square $N \times N$ matrices  that are analytic in $D_\pm$ and continuous in $G_\pm$. Moreover,  the determinant of $A_+ (A_-)$ is nonzero on $G_+ (G_-)$. \\

As one would expect, interchanging the matrices $A_-(\zeta)$ and $A_+(\zeta)$ in $\eqref{eq:rightfact}$ gives rise to a $\textit{left standard factorization}$. In either a right or a left factorization, the integers $\kappa_j = \kappa_j (A)$ are uniquely determined (see \cite {GK}) by the matrix $A(\zeta)$. Further, if the matrix $A(\zeta)$ admits a factorization for a pair of points $\lambda^\pm$, then it admits the a factorization of the same type for any pair of points $\mu^\pm \in D_\pm$, in that the $\textit{right}$ or $\textit{left indices}$, denoted by $\{ \kappa_j(A), j=1,\dots, N\}$, are independent of the points $\lambda^\pm$. 

\subsection{Banach algebras of continuous functions} \label{BActsfuns}
We let $\mathcal{U}(\Gamma)$ denote a Banach algebra of continuous functions on $\Gamma$ which includes the set of all rational functions $R(\Gamma)$ not having any poles on $\Gamma$. Further we insist that $\mathcal{U}(\Gamma)$ is $\textit{inverse closed}$ in the sense that if $a(\zeta) \in \mathcal{U}(\Gamma)$ and $a(\zeta)$ does not vanish anywhere on $\Gamma$, then $a^{-1}(\zeta) \in \Gamma$. Of course, $\mathcal{U}(\Gamma) \subset C(\Gamma)$, where $C(\Gamma)$ is the Banach algebra of all continuous functions on $\Gamma$, with the usual supremum norm. \\

Consider the region $G_+$. We let $R^+(\Gamma)$ denote the set of all rational functions not having any poles in this domain and $C^+(\Gamma)$ denote the closure of $R^+(\Gamma)$ in $C(\Gamma)$ with respect to the norm of $C(\Gamma)$. It is easy to see that  $C^+(\Gamma)$ is a subalgebra of $C(\Gamma)$ consisting of those functions that have analytic continuation to $D_+$ and which are continuous on $G_+$. We can now define $\mathcal{U}^+(\Gamma) = \mathcal{U}(\Gamma) \cap C^+(\Gamma)$. Again, it is straightforward to show that $\mathcal{U}^+(\Gamma)$ is a subalgebra of $\mathcal{U}(\Gamma)$. (Similar definitions of $C^-(\Gamma)$ and $\mathcal{U}^-(\Gamma)$ follow by considering the region $G_-$.)

\subsection{Splitting algebras}
It turns out that the ability to factorize a given matrix is intimately linked to the ability to express $\mathcal{U}(\Gamma)$ as a direct sum of two subalgebras - one containing analytic functions defined on $D_+$ and the other analytic functions on $D_-$.  To ensure uniqueness of this partition we let ${\mathring{\mathcal{U}}}^- (\Gamma)$ denote the subalgebra of $\mathcal{U}^-(\Gamma)$ consisting of all functions that vanish at the chosen point $\lambda^- \in D_-$. We now say that a Banach algebra $\mathcal{U}(\Gamma) \, \textit{splits}$ if we can write
\begin{equation*} 
\mathcal{U}(\Gamma) = \mathcal{U}^+(\Gamma) \oplus {\mathring{\mathcal{U}}}^-(\Gamma).
\end{equation*}

The prototypical example of a splitting algebra is the Wiener algebra, $W({\mathbb{T}})$, of all functions defined on $\mathbb{T}$, the unit circle $|\zeta| = 1$, of the form
\begin{equation*} 
a(\zeta) = \displaystyle \sum^\infty_{j = -\infty} a_j \zeta^j \quad \bigg (\sum^\infty_{j = -\infty} |a_j| < \infty \bigg )
\end{equation*}
with the norm $\| a(\zeta) \| = \sum^\infty_{j = -\infty} |a_j|$. The Banach algebras $W^\pm({\mathbb{T}})$ have a simple characterisation. For example, $W^+({\mathbb{T}})$ consists of all functions in $W(\mathbb{T})$ that can be expanded as an absolutely converging series in nonnegative powers of $\zeta$. However, the algebra $C({\mathbb{T}})$ does not split. For more details see \cite{BG1}. 

\subsection{R-algebras}
We say that a Banach algebra $\mathcal{U}(\Gamma)$ of complex-valued functions continuous on $\Gamma$ is an \textit{R-algebra} if the set of all rational functions $R(\Gamma)$ with poles not lying on $\Gamma$ is contained in $\mathcal{U}(\Gamma)$ and this set is dense, with respect to the norm of $\mathcal{U}(\Gamma)$. In passing, we note that any R-algebra of continuous functions is inverse closed. (See, for example Chapter 2, Section 3, p.\,44  \cite{CG}.) Following Theorem 5.1, p.\,20  \cite{BG2}, we have:
\begin{theorem} \label{thm:ralg}
\textbf{(Fundamental Factorization Theorem.)} Let $\mathcal{U}(\Gamma)$ be an arbitrary splitting $R$-algebra. Then every nonsingular matrix-valued function $A(\zeta) \in \mathcal{U}_{N \times N} (\Gamma)$ admits a right standard factorization with factors $A_\pm(\zeta)$ in the subalgebras $\mathcal{U}^\pm_{N \times N}(\Gamma)$.
\end{theorem}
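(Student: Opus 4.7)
The plan is to proceed in two main stages: first prove the scalar ($N=1$) version by direct use of the splitting decomposition, then bootstrap to matrices via a combination of rational approximation and stability of factorization under small perturbations.

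For the scalar case, let $a \in \mathcal{U}(\Gamma)$ be nonvanishing; inverse-closedness supplies $a^{-1} \in \mathcal{U}(\Gamma)$, and the winding number $\kappa$ of $a$ around the origin is well defined. Setting $r(\zeta) = ((\zeta-\lambda^+)/(\zeta-\lambda^-))^\kappa \in \mathcal{U}(\Gamma)$, the quotient $a/r$ has index zero and thus a continuous logarithm on $\Gamma$. The nontrivial observation is that $\log(a/r)$ actually lies in $\mathcal{U}(\Gamma)$: this follows from R-algebra density (approximate $a/r$ uniformly by a rational function of index zero whose logarithm is patently rational, hence in $\mathcal{U}$) combined with inverse-closedness and a continuity argument for the logarithm. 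Splitting $\log(a/r) = b_+ + b_-$ with $b_+ \in \mathcal{U}^+(\Gamma)$ and $b_- \in \mathring{\mathcal{U}}^-(\Gamma)$, one exponentiates; the images $e^{b_\pm}$ remain in $\mathcal{U}^\pm(\Gamma)$ because these are Banach subalgebras closed under absolutely convergent power series. This delivers $a = e^{b_-} \, r \, e^{b_+}$, a right standard factorization with single index $\kappa$.

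For the matrix case, first observe that every nonsingular rational matrix $R$ admits a right standard factorization $R = R_- D R_+$: by separating the poles and zeros lying in $D_+$ from those in $D_-$ and performing an elementary row/column reduction over the appropriate rings of rational functions, one peels off $R_-$ on the left and $R_+$ on the right, leaving a diagonal with the prescribed monomial entries. For general $A \in \mathcal{U}_{N \times N}(\Gamma)$, use R-algebra density (applied entrywise) to pick a rational $R$ with $\|A R^{-1} - I\| < 1$ and factor $R = R_- D R_+$. The heart of the matter is then a perturbation lemma: every $B \in \mathcal{U}_{N \times N}(\Gamma)$ sufficiently close to $I$ admits a canonical factorization $B = B_- B_+$ with all indices zero. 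I would prove this by writing $B_\pm = I + F_\pm$ with $F_\pm \in \mathcal{U}^\pm_{N \times N}(\Gamma)$ and solving the nonlinear equation $B - I = F_- + F_+ + F_- F_+$ by alternating application of the splitting projectors $P_\pm$, with contraction guaranteed once $\|B - I\|$ is small.

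The main obstacle is the final combination: to go from $A = B_- B_+ R_- D R_+$ to the prescribed form $A_- D A_+$ one must absorb the mixed middle factor $B_+ R_-$ into the outer halves, and matrix noncommutativity prevents sliding it through $D$ freely. The clean resolution, as carried out by Budjanu-Gohberg in \cite{BG1}, is operator-theoretic: define the Toeplitz operator $T_A f = P_+(A f)$ acting on $\mathcal{U}^+_N(\Gamma)$, show it is Fredholm with Fredholm index stable under small perturbations and equal to $-\sum \kappa_j$ on rational matrices, and conclude existence of factorization for general $A$ together with identification of the indices by density. This reformulation is precisely where the R-algebra property (supplying density and inverse-closedness) and the splitting property (supplying bounded projectors $P_\pm$) are simultaneously essential, and is the step I would expect to consume most of the technical effort.
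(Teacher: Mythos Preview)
The paper does not contain a proof of this theorem. It is explicitly taken as the \emph{starting point} of the whole article: the statement is quoted from Budjanu--Gohberg (reference \cite{BG2}, Theorem~5.1) and thereafter used as a black box in the proof of Theorem~\ref{ESLemma1.9}. So there is nothing in the paper to compare your proposal against; you have attempted to prove precisely what the author assumes.

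That said, a brief comment on the sketch itself. Your overall architecture---scalar case via logarithm plus splitting, then matrix case via rational approximation, a perturbation/canonical-factorization lemma near the identity, and finally a Toeplitz/Fredholm argument to handle the noncommutative obstruction---is indeed in the spirit of the Budjanu--Gohberg treatment you cite at the end. One step, however, is not right as written: in the scalar case you assert that the logarithm of a rational function of index zero is ``patently rational, hence in $\mathcal{U}$''. That is false; $\log\bigl((\zeta-a)/(\zeta-b)\bigr)$ is almost never rational. The correct route to $\log(a/r)\in\mathcal{U}(\Gamma)$ is not via rationality of the logarithm but via the functional calculus in the Banach algebra $\mathcal{U}(\Gamma)$: since $a/r$ is invertible and has index zero, it lies in the connected component of the identity in the group of invertibles, and the holomorphic functional calculus (or a direct power-series/integral argument) then produces a logarithm inside $\mathcal{U}(\Gamma)$. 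With that correction your scalar outline is sound; the matrix part is, as you yourself note, only a roadmap whose substantive content is the operator-theoretic argument you defer to \cite{BG1}.
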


\subsection{Wiener algebras on the real line}
Let $L_1(\mathbb{R})$ denote the usual convolution algebra of Lebesgue integrable functions on the real line. For any $g \in L_1(\mathbb{R})$ we define the \textit{Fourier Transform} of $g$ as the function $\mathcal{F} g$, or $\widehat{g}$, given by
\begin{equation*}
(\mathcal{F} g)(t) = \widehat{g}(t) := \dfrac{1}{\sqrt{2 \pi}} \int^\infty_{-\infty} g(x) e^{ixt} dx.
\end{equation*} 

We let $C^\infty_0(\mathbb{R})$ denote the algebra of continuous functions $f$ on $\mathbb{R}$ which vanish at $\pm \infty$. It is well known, (see, for example, Chapter 9, Theorem 9.6, p.\,182 \cite{Ru}), that if $g \in L_1(\mathbb{R})$, then \begin{equation} \label{eq:hatnorm}
\widehat{g} \in C^\infty_0(\mathbb{R}); \quad \| \widehat{g} \|_\infty \leq \| g \|_1. \\
\end{equation}


The \textit{Wiener algebra} $W(\mathbb{R})$ is the set of all functions of the form $f= \widehat{g} + c$ where $g \in L_1(\mathbb{R})$ and $c$ is a constant. The norm on $W(\mathbb{R})$ is given by
\begin{equation*}
\| f \|_{W(\mathbb{R})} = \|g\|_1+ |c|
\end{equation*}
Supose $f_1 = \widehat{g}_1 + c_1, f_2 = \widehat{g}_2 + c_2 \in W(\mathbb{R})$. Then since $\widehat{g}_1\widehat{g}_2 = \widehat{g_1 * g_2}$ (see, for example,  Chapter 9, Theorem 9.2, p.\,179 \cite{Ru}), it is straightforward to show that $W(\mathbb{R})$ is a Banach algebra. \\

We will also consider certain subalgebras of the Wiener algebra $W(\mathbb{R})$. For $r=0,1,2 ,\dots$ we define $W^r(\mathbb{R})$ to be the set of functions $f$ such that
\begin{equation*}
(1-it)^k D^k f(t) \in W(\mathbb{R}) \quad (k = 0, 1, \dots, r),
\end{equation*}
where $D^k$ is the $k$th order derivative. (Of course, $W^0(\mathbb{R})$ is simply $W(\mathbb{R})$.) We shall show that $W^r(\mathbb{R})$ is a Banach algebra and, moreover, is a splitting $R$-algebra. \\

\subsection{Homogeneity, differentiability and ellipticity}
Suppose $\xi = (\xi_1, \dots \xi_n) \in \mathbb{R}^n$ for some integer $n \geq 2$. It will be convenient to write $\xi = (\xi^\prime, \xi_n)$ where $\xi^\prime \in \mathbb{R}^{n-1}$. We assume that $\mathbb{R}^n$ has the usual Euclidean norm, and we let $\mathbb{S}^{n-1}$ denote the set $\{  \xi \in \mathbb{R}^n \, | \, \xi_1^2 + \dots + \xi_n^2 = 1 \}$. \\

We further suppose that $A_0(\xi^\prime, \xi_n)$ is an $N \times N$ matrix-valued function defined on $\mathbb{R}^n$, which is homogeneous of degree $0$. In addition, we will assume the elements of the matrix $A_0(\xi^\prime, \xi_n)$ belong to  $C^{r+2} (\mathbb{S}^{n-1})$, for some non-negative integer $r$, where $C^r (\mathbb{S}^{n-1})$ denotes the set of $r$ times continuously differentiable functions on the domain $\mathbb{S}^{n-1}$. Finally, we assume that $A_0(\xi^\prime, \xi_n)$ is elliptic, in that
\begin{equation*}
\inf_{\xi \in \mathbb{S}^{n-1}} | \operatorname{det} A_0(\xi)| > 0. \\
\end{equation*}

\subsection{The matrices $E_{\pm}$ and $E$}
We will be particularly  interested in the behaviour of $A_0(\xi^\prime, \xi_n)$ as $\xi_n \rightarrow \pm \infty$. \\

Our approach is effectively to fix $\xi^\prime$, and thereby consider factorization in the one-dimensional (scalar) variable $\xi_n$.  Since $A_0(\xi^\prime, \xi_n)$ is homogeneous of degree zero, 
\begin{equation*}
\lim_{\xi_n \rightarrow \pm \infty} A_0(\xi^\prime, \xi_n) =  A_0(0, \dots, 0, \pm 1), 
\end{equation*}
for fixed $\xi^\prime$. We define
\begin{equation} \label{Epmdefinition}
E_\pm := A_0(0, \dots, 0, \pm 1) \quad \text{and} \quad E:= E^{-1}_+ \, E_-.
\end{equation}

\subsection{The matrices $B_{\pm}$}
It is a standard result that any $E \in \mathbb{C}_{N \times N}$ can be expressed in \textit{Jordan Canonical Form}
\begin{equation*}
h_1 E h^{-1}_1 = J := \text{diag }[J_1, \dots, J_l],
\end{equation*}
where the \textit{Jordan block}  $J_k = J_k (\lambda_k)$ is a matrix of order $m_k$ with eigenvalue $\lambda_k$ on every diagonal entry, $1$ on the super-diagonal and $0$ elsewhere. The matrix $h_1$ is invertible and
\begin{equation*}
m_1 + \dots + m_l = N.
\end{equation*}
The Jordan matrix, $J$, is unique up to the ordering of the blocks $J_k, \, k=1, \dots, l$.  \\

Let $B^m(z)$ to be the $m \times m$ matrix $(b_{jk}(z))^m_{j,k=1}$ given by
\[
 b_{jk}(z) :=
   \begin{cases}
     0  & j < k\\
     1  & j = k\\
     z^{j-k}/(j-k)! & j > k.
  \end{cases}
\]
We now define
\begin{equation} \label{defnB1}
K := \text{diag } \big [ K_1, \dots, K_l \big ],
\end{equation}
where $K_k := \lambda_kB^{m_k}(1)$.  By construction, $K$ is a lower triangular matrix whose block structure \textit{and} diagonal elements are identical to those of $J$. \\

A routine inspection of the equation
\begin{equation*}
K_k u = \lambda_k u
\end{equation*}
shows that the eigenspace associated with the eigenvalue $\lambda_k$ has dimension one. Therefore, see p.\,191 \cite{Dett}, the matrix $K_k$ is similar to the Jordan block $J_k(\lambda_k)$ for $k=1, \dots, l$. Thus $K$ is similar to $J$, and we have 
\begin{equation*}
J = h_2 \, K \, h_2^{-1},
\end{equation*}
for some nonsingular matrix $h_2$. Hence we can write
\begin{equation} \label{EB1}
E = h \, K \, h^{-1}, \quad \text{where} \quad h:= h^{-1}_1 h_2. \\
\end{equation} 

For any $z_1, z_2 \in \mathbb{C}$ and positive integer $m$, it is easy to show that the matrix-valued functions $B^m(z)$ satisfy
\begin{equation} \label{Bab}
B^m(z_1 + z_2) = B^m(z_1 ) B^m(z_2); \quad B^m(0) = I.
\end{equation}
In particular, taking $z_2 = -z_1$ gives
\begin{equation} \label{B-a} 
B^m(- z_1) = [B^m(z_1)]^{-1}. \\
\end{equation}

In the analysis that follows we will be using the logarithm function on the complex plane. Unless specifically stated to the contrary, we will always take the principal branch of the logarithm, Log $z$,  defined by 
\begin{equation*}
\text{Log } z = \log |z| + i \arg z, \quad -\pi < \arg z \leq \pi,
\end{equation*}
for any non-zero $z \in \mathbb{C}$. In other words, we assume that the discontinuity in $\arg z$ occurs across the negative real axis. \\

For any $t \in \mathbb{R}$, we now define the complex-valued functions
\begin{equation} \label{alphapm}
\alpha_\pm(t) := (2 \pi i)^{-1} \log(t \pm i).
\end{equation}

Then
\begin{equation} 
\lim_{t \rightarrow +\infty} [ \alpha_+(t) - \alpha_-(t)] = 0; \quad \lim_{t \rightarrow -\infty} [ \alpha_+(t) - \alpha_-(t)]  = 1.
\end{equation}

Corresponding to the block decomposition in \eqref{defnB1}, we set
\begin{equation} \label{Bpmdef}
B_\pm(t) = \text{diag }\big [ B^{m_1}((2 \pi i)^{-1} \log(t \pm i)), \dots, B^{m_l}((2 \pi i)^{-1} \log(t \pm i)) \big ]. \\
\end{equation}
We note, in passing, that in the special case that $l = N$, then $B_\pm(t) = I$.\\

Following \cite{Sh}, we now give a simple test for membership of $W^r(\mathbb{R})$ for continuously differentiable functions. \\

\begin{lemma} \label{Wrtest}
Let $r=0,1,2, \dots $ and suppose the function $b(t) \in C^{r+1} (\mathbb{R})$ has the property that, for some $\delta > 0$,
\begin{equation*}
D^k b(t) = O ( |t|^{-k-\delta}), \quad k = 0,1, \dots, (r+1)
\end{equation*}
then $b(t) \in W^r(\mathbb{R})$.
\end{lemma}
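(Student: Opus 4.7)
The plan is to fix $k \in \{0, 1, \ldots, r\}$, set $f_k(t) := (1-it)^k D^k b(t)$, and show $f_k \in W(\mathbb{R})$ for each such $k$, which is exactly what $b \in W^r(\mathbb{R})$ requires. A routine application of the product rule together with the hypothesis $D^j b(t) = O(|t|^{-j-\delta})$ for $j=k$ and $j=k+1$ shows $f_k \in C^1(\mathbb{R})$, $|f_k(t)| = O(|t|^{-\delta})$ and $|f_k'(t)| = O(|t|^{-1-\delta})$. In particular $f_k$ vanishes at $\pm\infty$ and $f_k' \in L_1(\mathbb{R}) \cap L_2(\mathbb{R})$; these are the only properties of $f_k$ and $f_k'$ that the rest of the argument will use.

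The next step is to construct the candidate $L_1$-preimage of $f_k$ by setting $h := \mathcal{F}^{-1}(f_k')$ and $g_k(x) := h(x)/(ix)$ for $x \neq 0$. By \eqref{eq:hatnorm} and Plancherel, $h$ is bounded, continuous, vanishing at $\pm \infty$ and belongs to $L_2(\mathbb{R})$. I would then show $g_k \in L_1(\mathbb{R})$ by treating the regions $|x| \geq 1$ and $|x| \leq 1$ separately. On the exterior region, Cauchy-Schwarz gives
\begin{equation*}
\int_{|x| \geq 1} |g_k(x)|\,dx \,\leq\, \Bigl(\int_{|x| \geq 1} x^{-2}\,dx\Bigr)^{1/2} \|h\|_2 \,\leq\, \sqrt{2}\,\|f_k'\|_2 \,<\, \infty.
\end{equation*}

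For the interior region I would first observe that $h(0) = (2\pi)^{-1/2}\int f_k'(t)\,dt = 0$ (since $f_k(\pm\infty) = 0$), and then write $h(x) = (2\pi)^{-1/2}\int f_k'(t)(e^{-ixt}-1)\,dt$. Combining this with the elementary bound $|e^{-ixt}-1| \leq 2|xt|^\alpha$, valid for any $\alpha \in [0,1]$, yields $|h(x)| \leq C_\alpha |x|^\alpha \int |t|^\alpha |f_k'(t)|\,dt$, and the last integral is finite for every $\alpha \in (0,\min(\delta,1))$ because $|t|^\alpha|f_k'(t)| = O(|t|^{\alpha - 1 - \delta})$. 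Choosing such an $\alpha$ gives $|g_k(x)| \leq C_\alpha |x|^{\alpha - 1}$ on $|x| \leq 1$, which is integrable at the origin. Putting the two estimates together places $g_k$ in $L_1(\mathbb{R})$.

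To identify $\widehat{g_k}$ with $f_k$ I would use the distributional relation $ix\,\mathcal{F}^{-1}(f_k)(x) = \mathcal{F}^{-1}(f_k')(x) = h(x)$, whose general tempered-distribution solution is $\mathcal{F}^{-1}(f_k) = g_k + c\,\delta_0$ for some constant $c$. Fourier-transforming gives $f_k = \widehat{g_k} + c/\sqrt{2\pi}$; since $\widehat{g_k}$ vanishes at $\pm\infty$ by Riemann-Lebesgue while $f_k$ does too, the constant must be $0$, and therefore $f_k = \widehat{g_k} \in W(\mathbb{R})$. The main obstacle I anticipate is the interior-region estimate: for small $\delta$ the function $f_k$ itself is not in $L_2(\mathbb{R})$, so the clean Plancherel-plus-weighted-Cauchy-Schwarz proof of $g_k \in L_1$ fails, and one has to extract Hölder regularity of $h$ at the origin from the interplay between $h(0) = 0$ and the decay of $f_k'$ at infinity in order to handle the potential singularity of $g_k$ there.
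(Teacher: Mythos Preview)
Your proof is correct and follows the same route as the paper's: reduce to each $f_k := (1-it)^k D^k b$, use $f_k' \in L_2(\mathbb{R})$ together with Plancherel and Cauchy--Schwarz to control the inverse Fourier transform away from the origin, and treat the origin separately. The only difference is that the paper obtains the near-origin bound $\widehat{g}_k(\xi) = O(|\xi|^{\delta-1})$ by citing Theorem~127 of Titchmarsh, whereas you derive the equivalent H\"older estimate on $h$ at $0$ directly from $h(0)=0$ and the inequality $|e^{-ixt}-1| \leq 2|xt|^\alpha$, which makes your argument self-contained.
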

\begin{proof}
We follow the approach given in \cite{Sh}. For $0 \leq k \leq r$, we define
\begin{equation*}
g_k(t) = (1-it)^k b^{(k)}(t).
\end{equation*}
Our goal is to show that $g_k(t) \in W(\mathbb{R})$. \\

Differentiating with respect to $t$, 
\begin{equation*}
g^\prime_k(t) = -ik(1-it)^{k-1} b^{(k)}(t) + (1-it)^k b^{(k+1)}(t)
\end{equation*}
Then, by hypothesis, $g_k$ and $g^\prime_k$ are continuous. Moreover as $|t| \rightarrow \infty$, we have $g_k(t) = O (|t|^{-\delta})$ and $g^\prime_k(t) = O (|t|^{-1-\delta})$. Hence $g^\prime_k(t) \in  L^2(\mathbb{R})$. \\

On applying the Fourier transform ($\mathcal{F}_{t \rightarrow \xi}$) to the function $g^\prime_k(t)$, we obtain $\xi \widehat{g}_k(\xi) \in L^2(\mathbb{R})$. But, using the Cauchy-Schwarz inequality, 

\begin{equation*}
\int_{| \xi | \geq \epsilon} |\widehat{g}_k(\xi)| \, d\xi = \int_{| \xi | \geq \epsilon} \dfrac{1}{| \xi|} | \xi \widehat{g}_k(\xi)| \, d\xi \leq \bigg ( \int_{| \xi | \geq \epsilon} \dfrac{1}{| \xi|^2} \, d\xi\bigg )^{\frac{1}{2}} \|\xi \widehat{g_k} \|_{L^2} < \infty.
\end{equation*}


Hence, $\widehat{g}_k(\xi)$ is absolutely integrable everywhere outside a neighbourhood $(-\epsilon, \epsilon)$ of zero. On the other hand, for small $|\xi|$, from  Theorem 127, p.\,173 \cite{Ti}, $\widehat{g}_k(\xi) = O( |\xi|^{\delta - 1})$ and hence $\widehat{g}_k(\xi)$ is absolutely integrable inside $(-\epsilon, \epsilon)$. \\

Thus,  $\widehat{g}_k(\xi) \in L^1(\mathbb{R})$. We now define a new function $h_k(x) = \widehat{g}_k(-x)$. Then, by construction, $h_k(x) \in L^1(\mathbb{R})$ and taking the Fourier transform ($\mathcal{F}_{x \rightarrow t}$) of $h_k(x)$ we obtain
\begin{align*}
\widehat{h}_k(t) &= \dfrac{1}{\sqrt{2 \pi}} \int^\infty_{-\infty} \widehat{g}_k(-x) e^{ixt} dx \\
       &= \dfrac{1}{\sqrt{2 \pi}} \int^\infty_{-\infty} \widehat{g}_k(x) e^{-ixt} dx \\
       & = g_k(t)			
\end{align*}
Now $\widehat{h}_k(t) \in W(\mathbb{R})$, and hence, $g_k(t) \in W(\mathbb{R})$. This completes the proof of the lemma.
\end{proof}

\newpage
\subsection{Key theorem from Shamir}
The next theorem, see Appendix pp.\,122-123 \cite{Sh}, considers some properties of a certain matrix-valued function derived from an elliptic homogeneous matrix-valued function of degree zero. Together with Theorem \ref{thm:ralg}, it will provide the starting point for proving our second result.
\begin{theorem} \label{MainResultCase3}
Suppose that $A_0(\xi^\prime, \xi_n) \in C_{N \times N}^{r+3}(\mathbb{S}^{n-1})$ is a matrix-valued function which is homogeneous of degree $0$ and elliptic. Suppose that the Jordan form of $A_0^{-1}(0, \dots, 0,1) A_0(0, \dots, 0, -1)$ has blocks $J_k(\lambda_k)$ of size $m_k$ for $k=1, \dots, l$. Let the matrix $c := A^{-1}_0(0, \dots, 0, 1)$, and the constant invertible matrix $h$ be as in equation \eqref{EB1}.  Then, for fixed $\xi' \not =0$, 
\begin{align*}
\lim_{\xi_n \to +\infty} h^{-1} c A_0(\xi', \xi_n) h & = I; \\
\lim_{\xi_n \to -\infty} h^{-1} c A_0(\xi', \xi_n) h & = \operatorname{diag} \,[\lambda_1B^{m_1}(1), \dots, \lambda_lB^{m_l}(1)]. 
\end{align*}

Further let $\zeta = (\zeta_1, \dots \zeta_N)$, where 
\begin{equation} \label{zetaqdefn}
\zeta_q = - (\log \lambda_j)/(2 \pi i)  \, \text{ for } \sum^{j-1}_{k=1} m_k < q \leq \sum^{j}_{k=1} m_k, \quad q=1, \dots, N,
\end{equation}
and define $(\xi_n \pm i)^\zeta := \operatorname{diag} \, [(\xi_n \pm i)^{\zeta_1}, \dots, (\xi_n \pm i)^{\zeta_N}]$. \\

Then, for fixed $\xi^\prime \not =0$,
\begin{equation*}
A^*_0(\xi', \xi_n) := (\xi_n - i)^{-\zeta} B_-(\xi_n) h^{-1} c A_0(\xi^\prime, \xi_n) h B_+^{-1}(\xi_n) (\xi_n + i)^{\zeta} \in W^{r+2}_{N \times N}(\mathbb {R}),
\end{equation*}
and
\begin{equation} \label{A*0omegat}
\lim_{\xi_n \to \pm \infty} A^*_0(\xi', \xi_n)  = I. \\
\end{equation}
\end{theorem}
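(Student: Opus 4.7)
The plan is to tackle the theorem in three stages: first, verify the two boundary limits of $h^{-1}cA_0(\xi',\xi_n)h$; second, recast $A_0^*$ in a compact matrix-exponential form from which the limits in \eqref{A*0omegat} drop out; third, invoke Lemma \ref{Wrtest} to obtain membership in $W^{r+2}_{N\times N}(\mathbb{R})$. The first stage is immediate: homogeneity of degree $0$ gives $\lim_{\xi_n\to\pm\infty}A_0(\xi',\xi_n)=A_0(0,\dots,0,\pm 1)=E_\pm$ for fixed $\xi'\neq 0$, and since $c=E_+^{-1}$ and $h^{-1}Eh=K$ by \eqref{EB1}, the two products equal $I$ and $K=\operatorname{diag}[\lambda_jB^{m_j}(1)]$ respectively.

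For the second stage, the decisive observation is that within the $j$-th block,
\[
(\xi_n\pm i)^{-\mu_j}B^{m_j}\bigl((2\pi i)^{-1}\log(\xi_n\pm i)\bigr)=\exp\!\bigl(\log(\xi_n\pm i)\cdot M_j\bigr),\qquad M_j:=\tfrac{1}{2\pi i}(\log\lambda_j\,I+N_{m_j}),
\]
where $\mu_j=-(\log\lambda_j)/(2\pi i)$ is the common value of $\zeta_q$ on block $j$ and $N_{m_j}$ is the sub-diagonal nilpotent with $B^{m_j}(z)=e^{zN_{m_j}}$. Setting $M:=\operatorname{diag}[M_1,\dots,M_l]$ one verifies that $K=e^{2\pi iM}$ and
\[
A_0^*(\xi',\xi_n)=(\xi_n-i)^{M}\,M_0(\xi_n)\,(\xi_n+i)^{-M},\qquad M_0(\xi_n):=h^{-1}cA_0(\xi',\xi_n)h,
\]
with matrix powers defined via the exponential. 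Writing $\eta(\xi_n):=\log(\xi_n-i)-\log(\xi_n+i)$ (principal branch), so $\eta\to 0$ at $+\infty$ and $\eta\to-2\pi i$ at $-\infty$, one has $(\xi_n-i)^{M}(\xi_n+i)^{-M}=e^{M\eta(\xi_n)}$. At $+\infty$, $M_0\to I$ and $e^{M\eta}\to I$, giving $A_0^*\to I$; at $-\infty$, $M_0\to K$ commutes with $M$ and may be pulled through, yielding $Ke^{M\eta}\to Ke^{-2\pi iM}=KK^{-1}=I$.

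The main work is the third stage. Lemma \ref{Wrtest} applied componentwise to $A_0^*-I$ requires, for $k=0,\dots,r+3$, an estimate $D^k_{\xi_n}(A_0^*-I)=O(|\xi_n|^{-k-\delta})$ for some $\delta>0$. Homogeneity of degree $0$ and $A_0\in C^{r+3}(\mathbb{S}^{n-1})$ yield $D^k_{\xi_n}[M_0(\xi_n)-M_\infty]=O(|\xi_n|^{-k-1})$ on the appropriate half-line, where $M_\infty:=I$ near $+\infty$ and $M_\infty:=K$ near $-\infty$. Splitting
\[
A_0^*-I=(\xi_n-i)^{M}[M_0-M_\infty](\xi_n+i)^{-M}+\bigl[(\xi_n-i)^{M}M_\infty(\xi_n+i)^{-M}-I\bigr],
\]
the second bracket reduces, via commutation of $M_\infty$ with $M$, to $e^{M\eta}-I$ or $K(e^{M\eta}-e^{-2\pi iM})$, each $O(|\xi_n|^{-1})$ with derivatives of $\eta$ contributing additional factors of $|\xi_n|^{-1}$.

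The most delicate point, and the main obstacle, is the off-diagonal behaviour of the first bracket. Since $M_j$ has the single eigenvalue $(\log\lambda_j)/(2\pi i)$ on top of a nilpotent perturbation, $\|(\xi_n\pm i)^{\pm M_j}\|\lesssim|\xi_n|^{\pm\arg\lambda_j/(2\pi)}$ up to polynomial log factors, so the $(i,j)$ block is bounded by $|\xi_n|^{(\arg\lambda_i-\arg\lambda_j)/(2\pi)}\cdot\|[M_0-M_\infty]_{ij}\|$ (times logs). Our use of the principal branch forces $\arg\lambda_j\in(-\pi,\pi]$, so the exponent is strictly less than $1$; setting $\delta:=1-\max_{i,j}(\arg\lambda_i-\arg\lambda_j)/(2\pi)>0$ and combining with the $O(|\xi_n|^{-k-1})$ decay of $[M_0-M_\infty]_{ij}$ and its derivatives gives $D^k(A_0^*-I)_{ij}=O(|\xi_n|^{-k-\delta})$ for all $k\leq r+3$. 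Lemma \ref{Wrtest} then yields $A_0^*-I\in W^{r+2}_{N\times N}(\mathbb{R})$, and the theorem follows.
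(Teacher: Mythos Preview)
Your argument is correct and follows the same underlying strategy as the paper's Appendix~\ref{AppShamir}: split $A_0^*-I$ into the ``principal'' piece involving $M_0-M_\infty$ and a ``pure-conjugation'' piece, control the first via the decay $D^k(M_0-M_\infty)=O(|\xi_n|^{-k-1})$ from homogeneity together with the exponent bound coming from $-\tfrac12\le\operatorname{Re}\zeta_q<\tfrac12$, control the second as a function of $\eta(\xi_n)$, and then invoke Lemma~\ref{Wrtest}. The content, including your $\delta=1-\max_{i,j}(\arg\lambda_i-\arg\lambda_j)/(2\pi)$, matches the paper's $\delta_0$ in \eqref{delta0}.

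The packaging differs. The paper works in two stages---first the diagonalisable case (Lemma~\ref{Case2}), then the general Jordan case---and manipulates the blocks $B^{m_j}(\alpha_\pm)$ directly, computing the second summand via the factorisation $(\xi_n\pm i)^\alpha=(\xi_n\pm i0)^\alpha(1\pm i\xi_n^{-1})^\alpha$. You instead observe that on each block $(\xi_n\pm i)^{-\zeta}B_\pm$ collapses to a single matrix exponential $(\xi_n\pm i)^{M_j}$, which lets you treat the general case at once and read off the limits from $K=e^{2\pi iM}$. This is a cleaner bookkeeping device and avoids the two-step reduction; what it buys is brevity, at the cost of hiding the explicit scalar computation that the paper displays.

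Two small points to tighten. First, your stage-2 limit ``$M_0\to I$ and $e^{M\eta}\to I$, giving $A_0^*\to I$'' is not self-contained: since $(\xi_n-i)^M$ is unbounded, the conclusion actually relies on the splitting in stage~3 (where $(\xi_n-i)^M[M_0-I](\xi_n+i)^{-M}\to 0$ is established). You might flag that stage~2 is heuristic and defer the rigorous limit to the $k=0$ case of stage~3. Second, the polynomial $\log$ factors from the nilpotent part of $M_j$ need to be absorbed explicitly: replace your final $\delta$ by any $\delta'\in(0,\delta)$, as the paper does when it passes from $\delta_0$ to an arbitrary $\delta'<\delta_0$ at the end of Appendix~\ref{AppShamir}.
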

\begin{proof}
A detailed proof of this theorem is given in Appendix \ref{AppShamir}.\\
\end{proof}

\begin{remark}
Note that in \eqref{zetaqdefn}, the definition of $\zeta_q, \, q=1, \dots, N$ includes a multiplicative factor of (-1) not given in \cite{Sh}.
\end{remark}

\begin{remark} \label{delta0defn}
Since we are assuming that for every non-zero $z \in \mathbb{C}$ we have $-\pi < \arg z \leq \pi$, it follows immediately that 
\begin{equation*}
- 1/2  \leq \operatorname{Re } \zeta_j < 1/2,  \quad j = 1, \dots , N.
\end{equation*}
and hence
\begin{equation} \label{delta0}
\delta_0: = \min_{1 \leq j,k \leq N} ( 1 -  \operatorname{Re } \zeta_k +  \operatorname{Re } \zeta_j) >  0. \\
\end{equation}
\end{remark}

\newpage
\section{Statement of results}
\begin{theorem} \label{WrSplitRalg}
For $r=0,1,2, \dots, W^r(\mathbb{R})$ is a splitting R-algebra. 
\end{theorem}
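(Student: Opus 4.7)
The proof has four ingredients: $W^r(\mathbb{R})$ is (a) a Banach algebra, (b) contains $R(\dot{\mathbb{R}})$, (c) has $R(\dot{\mathbb{R}})$ dense in it, and (d) splits. For (a) I would take the norm
\begin{equation*}
\|f\|_{W^r}:=\sum_{k=0}^r \tfrac{1}{k!}\bigl\|(1-it)^k D^k f\bigr\|_{W(\mathbb{R})};
\end{equation*}
completeness follows from that of $W(\mathbb{R})$ together with the fact that uniform limits of derivatives give derivatives of limits, and submultiplicativity from Leibniz $(1-it)^k D^k(fg)=\sum_j \binom{k}{j}(1-it)^j D^j f\cdot (1-it)^{k-j}D^{k-j}g$ combined with the Cauchy-product bound, which telescopes to $\|fg\|_{W^r}\le \|f\|_{W^r}\|g\|_{W^r}$. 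For (b), use partial fractions to reduce every $r(t)\in R(\dot{\mathbb{R}})$ to terms $(t-z)^{-m}$ with $z\notin\mathbb{R}$, whose derivatives decay polynomially of any order; Lemma \ref{Wrtest} then places them in $W^r(\mathbb{R})$.

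\textbf{The main obstacle will be (c).} First note that $(1-it)^{-1}\in W^+(\mathbb{R})\subset W(\mathbb{R})$, so multiplying the hypothesis $(1-it)^k D^k f\in W$ by $(1-it)^{-k}$ gives $D^k f\in W$ for every $k\le r$. Writing $f=\hat{g}+c$ with $g\in L^1$, Fourier inversion then yields $x^k g\in L^1$ for $k\le r$, and, using the identity $(1-it)^k\hat{h}=\widehat{(1+\partial_x)^k h}$ (a consequence of $-it\hat{h}=\widehat{h'}$), the condition on $f$ unpacks into the weighted Sobolev control $(1+\partial_x)^k[(ix)^k g]\in L^1(\mathbb{R})$ for $k\le r$. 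My plan is then to approximate $g$ in this weighted topology by functions whose Fourier transforms are rational: first mollify and truncate $g$ to reduce to $g_n\in C_c^\infty(\mathbb{R})$ (whose Fourier transforms are Schwartz), then approximate each $\hat{g}_n$ by finite partial-fraction sums $\sum c_j(t-z_j)^{-m_j}$ with $z_j$ on a fine lattice in the open half-planes, the coefficients chosen so that also the $(1-it)^k D^k$ converge in $W$. Proving this final weighted-norm convergence, rather than mere $W$-norm convergence, will be the hardest step.

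\textbf{For (d), the splitting,} I would mimic the classical Wiener-algebra decomposition at the Fourier level. Given $f=\hat{g}+c\in W^r$, set $g_+:=g\mathbf{1}_{[0,\infty)}$, $g_-:=g\mathbf{1}_{(-\infty,0)}$, $d:=\hat{g}_-(-i)$, and $f_+:=\hat{g}_++c+d$, $f_-:=\hat{g}_--d$, so that $f_-(-i)=0$. The analytic extensions of $\hat{g}_\pm$ to the upper/lower half-planes are classical. To verify $f_\pm\in W^r$ one computes
\begin{equation*}
(1-it)^k D^k f_\pm = \widehat{(1+\partial_x)^k[(ix)^k g_\pm]},
\end{equation*}
and observes that the distributional boundary contributions at $x=0$ from differentiating $\mathbf{1}_{[0,\infty)}$ are weighted by $[(ix)^k g]^{(i)}(0)$ for $0\le i\le k-1$, all of which vanish because $(ix)^k g$ has a zero of order $k$ at the origin (by a Taylor-plus-integrability argument using $g\in L^1$). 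Hence $(1+\partial_x)^k[(ix)^k g_+]=[(1+\partial_x)^k((ix)^k g)]\,\mathbf{1}_{[0,\infty)}\in L^1(\mathbb{R})$, and similarly for $g_-$, so $f_\pm\in W^r_\pm(\mathbb{R})$; uniqueness of the splitting is inherited from $W(\mathbb{R})$ via Liouville, since any element of $W^+\cap\mathring{W}^-$ continues to a bounded entire function vanishing at $-i$, hence is identically zero.
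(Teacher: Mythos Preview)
Your treatment of (a) and (b) is essentially the paper's. Your splitting argument (d) is genuinely different: the paper proves $W^r$ splits by an induction on $r$ built from the operator identities $D\Pi^\pm=\Pi^\pm D$ and $[tI,\Pi^\pm]f\in\mathbb{C}$, whereas you work directly on the Fourier side. Your idea is sound, but the justification ``by a Taylor-plus-integrability argument using $g\in L^1$'' undersells what is required. From $g\in L^1$ alone one cannot conclude that $(ix)^k g$ is $C^{k-1}$ at the origin with vanishing jets; you need the full $W^r$ hypothesis. Concretely, the identities $\|x^k g\|_{L_1}=\|D^k f\|_W$ and $\|D^j(x^k g)\|_{L_1}=\|t^j D^k f\|_W$ (valid for $0\le j\le k\le r$) give $D^j(x^k g)\in L^1$ for all such $j,k$. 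Hence $h:=(ix)^k g$ has $h^{(j)}\in L^1$ for $j\le k$, so $h^{(j)}$ is absolutely continuous for $j<k$; then the integrability of $h^{(j)}(x)/x$ near $0$ (which reduces to $x^{l}D^m g\in L^1$ with $m\le l$) forces $h^{(j)}(0)=0$. With this repair your direct computation goes through, and it is arguably cleaner than the paper's commutator induction.

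The genuine gap is (c). You correctly flag it as the hardest step and then do not prove it: ``approximate $\hat g_n$ by finite partial-fraction sums with $z_j$ on a fine lattice, the coefficients chosen so that also the $(1-it)^k D^k$ converge in $W$'' is a statement of hope rather than an argument, and it is not clear how to select such coefficients so as to control all $r+1$ weighted norms simultaneously. The paper's route is quite different and fully constructive. After reducing (as you do) to $h\in C_c^\infty(\mathbb{R})$, and additionally arranging that $h$ vanishes in a neighbourhood of $0$, one treats $\theta^+h$ and $\theta^-h$ separately. For $x\ge 0$ set $y=e^{-x}$ and $\psi(y):=h(-\log y)/y$; then $\psi\in C^\infty[0,1]$, and a single Bernstein polynomial $B_M\psi$ approximates $\psi$ \emph{together with all its derivatives up to order $r$} uniformly on $[0,1]$. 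Pulled back, the approximant is $\theta^+(x)\,e^{-x}\sum_{k=0}^{M}b_k e^{-kx}$, whose Fourier transform is rational with poles at $-i,-2i,\dots,-(M{+}1)i$, and the simultaneous derivative bounds combine with the weight $x^k e^{-x}$ to give convergence in $\|\cdot\|_{W^r}$. This exponential substitution plus Bernstein approximation is the missing idea in your plan.
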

Our second result considers the factorization of an elliptic matrix-valued function of degree $\mu$, and it confirms the isotropic case of Lemma 1.9, p.\,60 \cite{Shar}.
\begin{theorem} \label{ESLemma1.9}
Let $r := [n/2] +1$. Suppose that $A \in C^{r+3}_{N \times N}(\mathbb{R}^n)$ is a matrix-valued function which is homogeneous of degree $\mu$ and elliptic. Then, for fixed $\omega \in \mathbb{S}_{n-2}$, 
\begin{equation*}
A_\omega(\xi) = A(|\xi^\prime| \omega_1, \dots, |\xi^\prime| \omega_{n-1}, \xi_n) \end{equation*}
admits the factorization
\begin{equation*}
A_\omega(\xi) = (\xi_n - i |\xi^\prime|)^{\mu/2} A^-_\omega(\xi) \,  D(\omega, \xi) \,A^+_\omega(\xi) (\xi_n + i |\xi^\prime|)^{\mu/2},
\end{equation*}
where $(A^-_\omega(\xi))^{\pm 1}$ and $(A^+_\omega(\xi))^{\pm 1}$are homogeneous matrix-valued functions of order $0$ that, for fixed $\xi' \not = 0$, satisfy estimates of the form
\begin{equation} \label{DqApmestimate}
\sum_{0 \leq q \leq r} \operatorname{ess} \sup_{{\xi_n} \in \mathbb{R}} | \xi^q_n D^q_{\xi_n}  (A^{\pm}_\omega(\xi', \xi_n))_{j,k}| < + \infty, \quad 1 \leq j,k \leq N. 
\end{equation}
Further, they have analytic extensions, with respect to $\xi_n$, in the lower half plane and the upper half plane respectively. \\

$D(\omega, \xi)$ is a lower triangular matrix with elements
\begin{equation*}
\bigg (\dfrac{\xi_n - i|\xi^\prime|}{\xi_n + i|\xi^\prime|} \bigg )^{{\kappa_k(\omega)} + \zeta_k}
\end{equation*}
on its diagonal.  Its off-diagonal terms are homogeneous of degree $0$, and they satisfy an estimate of the form \eqref{DqApmestimate}. The integer
\begin{equation*}
\kappa(\omega) := \sum^N_{k=1} \kappa_k(\omega) =\dfrac{1}{2 \pi} \Delta \arg \det \big [(|\xi'|^2+\xi_n^2)^{-\mu/2} \, A_\omega(\xi', \xi_n)  \big ] \big |^{+\infty}_{\xi_n= - \infty} - \sum^N_{k=1} \operatorname{Re} \zeta_k
\end{equation*}
depends continuously on $\omega \in \mathbb{S}^{n-2}$. The partial sums $\sum^M_{k=1} \kappa_j(\omega), 1 \leq  M < N$, are upper semicontinuous;
\begin{equation*}
\zeta_k = - \dfrac{\log \lambda_j}{ 2 \pi i} \text{  for  } \sum^{j-1}_{\nu=1} m_\nu < k \leq 
\sum^{j-1}_{\nu=1} m_\nu, \quad k=1, \dots, N,
\end{equation*}
$\lambda_j$ are the eigenvalues of the matrix $A^{-1}(0, \dots ,0,+1)A(0, \dots, 0, -1)$ to which there correspond Jordan blocks of dimension $m_j$.
\end{theorem}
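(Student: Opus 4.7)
The plan is to reduce to the scalar Shamir setup and then apply the Fundamental Factorization Theorem in $W^{r+2}_{N\times N}(\mathbb{R})$. First, factor $(|\xi'|^2+\xi_n^2)^{\mu/2} = (\xi_n - i|\xi'|)^{\mu/2}(\xi_n + i|\xi'|)^{\mu/2}$, so that $A_0 := (|\xi'|^2+\xi_n^2)^{-\mu/2} A$ is elliptic, of class $C^{r+3}$, and homogeneous of degree zero. For fixed $\omega \in \mathbb{S}^{n-2}$, degree-zero homogeneity gives $A_0(|\xi'|\omega, \xi_n) = A_0(\omega, \tau)$ with $\tau := \xi_n/|\xi'|$, so the problem reduces to factorizing the single-variable matrix $t \mapsto A_0(\omega, t)$.

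Theorem \ref{MainResultCase3} then places
\[
A_0^*(\omega, t) := (t-i)^{-\zeta} B_-(t) h^{-1} c \, A_0(\omega, t) \, h B_+^{-1}(t)(t+i)^{\zeta}
\]
in $W^{r+2}_{N\times N}(\mathbb{R})$ with $A_0^* \to I$ at $\pm\infty$, and Theorem \ref{WrSplitRalg} combined with Theorem \ref{thm:ralg} delivers a right standard factorization $A_0^* = A_-^* D^* A_+^*$ with $A_\pm^* \in W^{r+2,\pm}_{N\times N}(\mathbb{R})$ and $D^* = \operatorname{diag}[((t-i)/(t+i))^{\kappa_k}]$. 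Using the freedom $(A_-^*, A_+^*) \mapsto (A_-^* C^{-1}, C A_+^*)$, I normalize so that $A_\pm^* \to I$ at $\pm\infty$, and set
\begin{align*}
A_\omega^-(\xi) &:= c^{-1} h \, B_-^{-1}(\tau)(\tau-i)^\zeta A_-^*(\tau)(\tau-i)^{-\zeta} B_-(\tau), \\
D(\omega, \xi) &:= B_-^{-1}(\tau)(\tau-i)^\zeta D^*(\tau)(\tau+i)^{-\zeta} B_+(\tau), \\
A_\omega^+(\xi) &:= B_+^{-1}(\tau)(\tau+i)^\zeta A_+^*(\tau)(\tau+i)^{-\zeta} B_+(\tau) h^{-1}.
\end{align*}
A telescoping check yields $A_0(|\xi'|\omega, \xi_n) = A_\omega^- D A_\omega^+$, and the stated factorization of $A_\omega$ follows on restoring the $(\xi_n \pm i|\xi'|)^{\mu/2}$ factors. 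Since $B_\mp^{\pm 1}$, $(\tau \mp i)^{\pm\zeta}$, and $A_\mp^*$ all extend analytically to the lower/upper half-plane, $A_\omega^\pm$ have the claimed half-plane analyticity; the matrix $D(\omega, \xi)$ is lower triangular (unitriangular $B_\pm$ sandwiching diagonal middle factors) with diagonal $((\xi_n - i|\xi'|)/(\xi_n + i|\xi'|))^{\kappa_k + \zeta_k}$, and every factor depends only on $\tau$, giving homogeneity of degree zero automatically. The index formula follows by taking determinants: $\det A_\pm^*$ contribute no winding (they are nonvanishing in the respective half-plane and tend to $1$ at $\infty$), the $(t\pm i)^{\pm\zeta}$ factors subtract $\sum_k \operatorname{Re}\zeta_k$ from the argument of $\det A_0^*$ over $\mathbb{R}$, and the remaining winding is $\sum_k \kappa_k(\omega) = \kappa(\omega)$.

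The main obstacle is verifying the estimate \eqref{DqApmestimate} for $A_\omega^\pm$ and for the off-diagonal entries of $D$. The $(j,k)$ entry of a conjugation $(\tau \mp i)^\zeta X (\tau \mp i)^{-\zeta}$ carries a multiplier $(\tau \mp i)^{\zeta_j - \zeta_k}$ whose real part lies in $(-1,1)$ by Remark \ref{delta0defn}, giving at most $|\tau|^{1-\delta_0}$ growth. Since $A_\pm^* \in W^{r+2}_{N\times N}(\mathbb{R})$, one has $D^q(A_\pm^*)_{j,k} = O(|t|^{-q})$ for $0\leq q\leq r+2$, and two of the $r+2$ available derivatives are spent via Leibniz on the $(\tau \mp i)^{\pm\zeta}$ factors (whose $q$th derivative behaves like $|\tau|^{\operatorname{Re}\zeta_j - q}$), leaving $r$ good derivatives to deliver $|\xi_n|^q D^q_{\xi_n}(\cdot) = O(1)$; contributions from $B_\pm$, which are polynomials in $\log(\tau\pm i)$, are subdominant. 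Invertibility and the same bound for $(A_\omega^\pm)^{-1}$ follow from invertibility of each factor and the inverse-closure of $W^{r+2}$. Finally, continuity of $\kappa(\omega)$ is homotopy invariance of the winding number of $\det A(\omega,\cdot)$, and upper semicontinuity of $\sum_{k=1}^M \kappa_k(\omega)$ is the classical instability property of partial indices under small perturbations in an R-algebra (see \cite{GK}).
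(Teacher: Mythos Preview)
Your overall architecture is exactly the paper's: reduce to degree zero, apply Theorem~\ref{MainResultCase3} to land in $W^{r+2}_{N\times N}(\mathbb{R})$, factor there via Theorems~\ref{WrSplitRalg} and~\ref{thm:ralg}, and then unwind the $B_\pm$ and $(t\pm i)^{\pm\zeta}$ conjugations. Your factors $A^\pm_\omega$ coincide (up to your inverse convention for $A^*_-$) with the paper's $A^\pm_2 h^{-1}$ and $c^{-1}h(A^-_2)^{-1}$, and your $D(\omega,\xi)$ is the paper's $d(\omega,t)$.

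The genuine gap is in your verification of estimate~\eqref{DqApmestimate}. Your Leibniz argument only yields
\[
\big|\,t^q D^q_t\big[(t\pm i)^{\zeta_j-\zeta_k}(A^*_\pm)_{j,k}\big]\,\big| \;=\; O\big(|t|^{\,\operatorname{Re}(\zeta_j-\zeta_k)}\big),
\]
since each Leibniz term $D^p(t\pm i)^{\zeta_j-\zeta_k}\cdot D^{q-p}(A^*_\pm)_{j,k}$ is $O(|t|^{\operatorname{Re}(\zeta_j-\zeta_k)-q})$; multiplying by $|t|^q$ leaves the same growth regardless of how many of the $r+2$ derivatives you ``spend''. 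When $\operatorname{Re}(\zeta_j-\zeta_k)>0$ this is unbounded, and membership of $(A^*_\pm)_{j,k}$ in $W^{r+2}(\mathbb{R})$ together with $(A^*_\pm)_{j,k}(\pm\infty)=0$ does \emph{not} by itself supply any compensating decay: a function in $W^{r+2}(\mathbb{R})$ can approach its limit at infinity arbitrarily slowly. The subsequent $B_\pm^{-1}(\cdot)B_\pm$ conjugation only introduces logarithmic factors and cannot repair this.

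What the paper actually does (Appendix~\ref{AppMatrixFactors}, following Duduchava) is prove the needed extra decay of the off-diagonal entries of $A^*_\pm$. From the factorization one writes
\[
A^*_+ - A^*_- \;=\; \Big[\operatorname{diag}\Big(\tfrac{t+i}{t-i}\Big)^{\kappa(\omega)}-I\Big]A^*_- \;+\; \operatorname{diag}\Big(\tfrac{t+i}{t-i}\Big)^{\kappa(\omega)}A^*_-\,(A^*_0-I),
\]
and then uses the \emph{entrywise} decay $D^q_t(A^*_0-I)_{s,k}=O(|t|^{-q-\operatorname{Re}\zeta_s+\operatorname{Re}\zeta_k+\epsilon-1})$ obtained in the proof of Theorem~\ref{MainResultCase3} (not merely $A^*_0\to I$). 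This gives $D^q_t(A^*_+-A^*_-)_{j,k}=O(|t|^{-\nu-q})$ with $0<\nu<1$, and a H\"older-space estimate for $S_{\mathbb{R}}$ (transferred to the circle, Lemmas~\ref{PhikHolder}--\ref{tkDkSR}) then shows the same decay survives under $\Pi^\pm$, yielding $D^q_t(A^*_\pm)_{j,k}=O(|t|^{-\operatorname{Re}(\zeta_j-\zeta_k)-\sigma-q})$ for some $\sigma>0$. Only with this in hand do the conjugated factors $A^\pm_1$, and hence $A^\pm_2$, satisfy~\eqref{DqApmestimate} (indeed lie in $W^r_{N\times N}(\mathbb{R})$; see Lemma~\ref{Apm2inWr}). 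Your proposal omits this step, and without it the estimate is not established.
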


\newpage
\section{Proof of the first result}
The objective of this section is to prove Theorem \ref{WrSplitRalg}. Let $\theta^\pm$ denote the characteristic functions of $\mathbb{R}^\pm$ respectively. 

\begin{lemma} \label{w0ralg}
The Wiener algebra $W(\mathbb{R})$ is an R-algebra.
\end{lemma}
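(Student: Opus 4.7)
The plan is to verify three properties in sequence: (i) every function in $W(\mathbb{R})$ is continuous on the one-point compactification $\dot{\mathbb{R}}$; (ii) every rational function with no poles on $\dot{\mathbb{R}}$ lies in $W(\mathbb{R})$; and (iii) the set $R(\dot{\mathbb{R}})$ is dense in $W(\mathbb{R})$. Property (i) is immediate from \eqref{eq:hatnorm}: any $f = \widehat{g} + c \in W(\mathbb{R})$ is continuous on $\mathbb{R}$, and since $\widehat{g}(t) \to 0$ as $|t| \to \infty$ the function $f$ extends continuously to $\dot{\mathbb{R}}$ with value $c$ at infinity.

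For (ii) I would invoke partial fractions: every $r \in R(\dot{\mathbb{R}})$ has the form $r(t) = r_\infty + \sum_{k,j} c_{k,j}(t-a_k)^{-j}$, a finite sum with $a_k \in \mathbb{C} \setminus \mathbb{R}$. Because $W(\mathbb{R})$ is a Banach algebra, it suffices to show that $(t-a)^{-1} \in W(\mathbb{R})$ for any $a \notin \mathbb{R}$. A direct computation of the Fourier integral gives, for $\operatorname{Im} a > 0$, the identity $(t-a)^{-1} = \widehat{g_a}(t)$ with $g_a(x) := i\sqrt{2\pi}\,\theta^-(x)\, e^{-iax} \in L^1(\mathbb{R})$; the case $\operatorname{Im} a < 0$ is handled analogously, with the support moved to $\mathbb{R}^+$ and the sign of the prefactor reversed.

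The principal obstacle is (iii). The assignment $\widehat{g} + c \longmapsto (g,c)$ identifies $W(\mathbb{R})$ isometrically with $L^1(\mathbb{R}) \oplus \mathbb{C}$, so given $f = \widehat{g} + c$ and $\varepsilon > 0$, choosing the rational approximant $r$ to satisfy $r(\infty) = c$ reduces the task to approximating $g$ in $L^1(\mathbb{R})$ by a function $h$ whose Fourier transform is a proper rational function with no real poles. By step (ii) and partial fractions, the preimage under $\mathcal{F}$ of such rational functions contains the linear span $\mathcal{S}$ of the one-sided exponentials $\theta^-(x)\, e^{-iax}$ (for $\operatorname{Im} a > 0$) and $\theta^+(x)\, e^{-iax}$ (for $\operatorname{Im} a < 0$), so (iii) reduces to density of $\mathcal{S}$ in $L^1(\mathbb{R})$.

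This last density I would establish by Hahn-Banach duality. Suppose $\varphi \in L^\infty(\mathbb{R})$ annihilates $\mathcal{S}$. The condition $\int_{-\infty}^0 \varphi(x) e^{-iax}\,dx = 0$ for every $a$ with $\operatorname{Im} a > 0$ becomes, after substituting $y = -x$ and writing $s = -ia$ so that $\operatorname{Re} s > 0$, the statement that the Laplace transform of $\varphi(-y)\chi_{(0,\infty)}(y)$ vanishes identically on the open right half-plane; the uniqueness theorem for the Laplace transform then forces $\varphi = 0$ almost everywhere on $(-\infty, 0)$. The condition coming from $\theta^+$ is treated symmetrically to conclude $\varphi = 0$ almost everywhere on $(0, \infty)$, hence $\varphi \equiv 0$ in $L^\infty(\mathbb{R})$, which completes the argument.
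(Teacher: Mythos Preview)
Your proof is correct, but it takes a genuinely different route from the paper for the density step (iii). The paper's argument is \emph{constructive}: it first reduces to approximating $\theta^+ h$ (with $h \in C^\infty_c(\mathbb{R})$) in $L_1$, then makes the change of variable $y = e^{-x}$ to transplant the problem to $[0,1]$, and finally approximates the resulting function by Bernstein polynomials; transforming back yields an explicit exponential sum $\theta^+ \sum_k b_k e^{-(k+1)x}$ whose Fourier transform is a rational function with poles at $-ik$. Your Hahn--Banach/Laplace-uniqueness argument is shorter and more elegant for the base case $r=0$, and it avoids the Bernstein machinery entirely.

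The trade-off is extensibility. The paper's motivation for the constructive proof is that the same Bernstein approximation, with the same change of variable, is later reused to show that $W^r(\mathbb{R})$ is an $R$-algebra for $r \geq 1$ (Lemma~\ref{WrRalgebra}): Bernstein polynomials simultaneously approximate a function \emph{and} its derivatives uniformly, which is precisely what is needed to control the weighted derivative norms $\|(1-it)^k D^k f\|_W$. Your duality argument does not carry this information, so while it settles Lemma~\ref{w0ralg} cleanly, it would need a substantially different idea to handle the $W^r$ case.
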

\begin{proof}
An abbreviated proof of this lemma is given in Chapter 2, Section 4, pp.\,62-63  \cite{CG}. A more detailed proof is included here, both for completeness and to introduce some analysis that will be useful when considering the subalgebras $W^r(\mathbb{R})$ for $r \geq 1$. \\

We begin by showing that $W(\mathbb{R})$ contains all rational functions with poles off $\dot{\mathbb{R}}$. Firstly, we note the identities
\begin{equation*}
(t-z_+)^{-1} = \mathcal{F}_{x \to t} \big (\sqrt{2 \pi} \, i \, \theta^-(x) \, e^{-i z_+ x} \big ),  \quad \text{Im } z_+ > 0,
\end{equation*}
\begin{equation*}
(t-z_-)^{-1} = - \mathcal{F}_{x \to t} \big ( \sqrt{2 \pi} \, i \, \theta^+(x) \, e^{-i z_- x} \big ),  \quad \text{Im } z_- < 0,
\end{equation*}
where the functions $\theta^-(x) e^{-i z_+ x}$ and $ \theta^+(x) e^{-i z_- x} \in L_1(\mathbb{R})$.  Secondly, since all functions in $W(\mathbb{R})$ are bounded at infinity, any rational function in $W(\mathbb{R})$ must be such that the degree of the numerator must be less than or equal to the degree of the denominator. (In particular, non-constant polynomial functions are not included in $W(\mathbb{R})$.) Finally, the fact that $W(\mathbb{R})$ contains all rational functions with poles off $\dot{\mathbb{R}}$ now follows directly, because $W(\mathbb{R})$ is an algebra and we have the usual partial fraction decomposition over $\mathbb{C}$. \\

We now wish to show that rational functions with poles off $\dot{\mathbb{R}}$ are dense in $W(\mathbb{R})$. Suppose $f \in W(\mathbb{R})$ is arbitrary and $r  \in W(\mathbb{R})$ is rational.  By definition, we can write $f(t) = \widehat{g}(t) + c$ and $r(t) = \widehat{s}(t) + d$, where $g,s \in L_1(\mathbb{R})$ and $c,d \in \mathbb{C}$. Let $C^\infty_c(\mathbb{R})$ denote the set of smooth functions with compact support in $\mathbb{R}$. Then,  $C^\infty_c(\mathbb{R})$ is dense in $L_1(\mathbb{R})$ and 
\begin{align*}
\| f - r \|_W & := \| g -s \|_{L_1} + |c - d| \\
& \leq \| g - h \|_{L_1} + \| h - s \|_{L_1} + |c - d | \quad \text{where  } h \in C^\infty_c(\mathbb{R}) \\
& =  \| g - h \|_{L_1} + \| \theta^+ h + \theta^- h - \theta^+ s - \theta^- s \|_{L_1} \quad (\text{taking  } d=c) \\
& \leq  \| g - h \|_{L_1} + \| \theta^+ h  - \theta^+ s  \|_{L_1} +  \| \ \theta^- h -  \theta^- s \|_{L_1}.
\end{align*}
Of course, the approximations to $\theta^+ h$ and $\theta^- h$, by $\theta^+ s$ and $\theta^- s$ respectively, are independent but similar. Hence, to prove that $W(\mathbb{R})$ is an R-algebra, it is enough for us to show that we can approximate $\theta^+(x) h(x)$, where $h \in C^\infty_c(\mathbb{R})$, arbitrarily closely in the $L_1(\mathbb{R})$ norm by a function $\theta^+ (x) s(x)$ such that $\widehat{\theta^+ s}$ is rational and has no poles in the upper half plane. \\

For $x \geq 0$, we let $y=e^{-x}$ and define
\[
 \psi(y) :=
  \begin{cases}
   h(-\log(y))/y & \text{if } y \in (0,1] \\
   0       & \text{if } y=0.
  \end{cases}
\]
Since $h(x)$ has compact support, $\psi(y)$ is identically zero in some interval $[0, \nu)$, where $\nu > 0$. Thus, by construction, $\psi(y) \in C^\infty [0,1]$.\\

Hence, given any $\epsilon > 0$, we can choose a Bernstein polynomial, see \cite{Lo}, $(B_M \psi)(y)$, of degree $M = M(\epsilon)$, such that
\begin{align*}
& \sup_{y \in [0,1]} | \psi(y) - (B_M \psi)(y) | < \epsilon \\
\implies & \sup_{y \in [0,1]} | \psi(y) - \sum^M_{k=0} b_k y^k | < \epsilon \quad \text{for certain } b_k \in \mathbb{C}, k=0,1,2, \dots, M \\
\implies & \sup_{x \in [0,\infty)} | h(x) e^{x} - \sum^M_{k=0} b_k e^{-kx} | < \epsilon. 
\end{align*}
We let $S(x) = \sum^M_{k=0} b_k e^{-kx}$ and observe, therefore, that our proposed approximant to $\theta^+ h(x)$ is $\theta^+ S(x) e^{-x}$. \\

Of course, the Fourier transform of $\theta^+ S(x) e^{-x}$ is a rational function with no poles in the upper half-plane, since for $k=1,2,3,  \dots$ we have
\begin{equation*}
\widehat{\theta^+ e^{-k x}} = \dfrac{i}{\sqrt{2 \pi}} \,  \dfrac{1}{t+i k}.
\end{equation*}

Finally, we take $\theta^+ s(x):= \theta^+ S(x) e^{-x}$ and then
\begin{align*}
\| \theta^+ h - \theta^+ s(x) \|_{L_1} & = \int^\infty_0 | h(x) - S(x) e^{-x} | \, dx \\
& = \int^\infty_0 | h(x)e^x  - S(x)  | \, e^{-x}dx \\
& \leq \epsilon \int^\infty_0 e^{-x}dx \\
& = \epsilon.
\end{align*}
This completes the proof that $W(\mathbb{R})$ is an R-algebra. \\
\end{proof}

\begin{remark} \label{ghatC+}
Suppose now that $f = \widehat{g} \in W(\mathbb{R})$. From the proof of the above lemma, we can show that  $\widehat{\theta^+ g} \in C^+(\dot{\mathbb{R}})$. (See section \ref{BActsfuns}.) Indeed, applying inequality \eqref{eq:hatnorm} we have 
\begin{equation*} 
\| \widehat{\theta^+ g} - \widehat{\theta^+ s(x)} \|_\infty \leq \| \theta^+ g - \theta^+ s(x) \|_{L_1}.
\end{equation*}
Since $\widehat{\theta^+ s(x)} \in R^+(\dot{\mathbb{R}})$, we immediately have $\widehat{\theta^+ g} \in C^+(\dot{\mathbb{R}})$, because $C^+(\dot{\mathbb{R}})$ is the closure of $R^+(\dot{\mathbb{R}})$ with respect to the supremum norm. It follows in an exactly similar way that  $\widehat{\theta^- g} \in C^-(\dot{\mathbb{R}})$. \\
\end{remark}

\begin{lemma} \label{Wsplits}
The Wiener algebra $W(\mathbb{R})$ splits.
\end{lemma}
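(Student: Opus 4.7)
The plan is to decompose any $f \in W(\mathbb{R})$ by splitting the underlying $L_1$-function $g$ according to the sign of its argument, and then absorb the residual values into a harmless constant so that the ``lower'' part vanishes at $\lambda^- = -i$.

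First, given $f = \widehat{g} + c$ with $g \in L_1(\mathbb{R})$, I would write $g = \theta^+ g + \theta^- g$, so $\widehat{g} = \widehat{\theta^+ g} + \widehat{\theta^- g}$. Both $\theta^\pm g$ lie in $L_1(\mathbb{R})$, so their Fourier transforms are immediately elements of $W(\mathbb{R})$. By Remark \ref{ghatC+}, $\widehat{\theta^+ g} \in C^+(\dot{\mathbb{R}})$ and $\widehat{\theta^- g} \in C^-(\dot{\mathbb{R}})$, so in fact $\widehat{\theta^+ g} \in W^+(\mathbb{R})$ and $\widehat{\theta^- g} \in W^-(\mathbb{R})$. (The analytic extensions can also be seen directly from the integral representation, since $e^{ixt}$ decays exponentially in $x > 0$ when $\operatorname{Im} t > 0$, and vice versa.)

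Next, since $\widehat{\theta^- g}$ need not vanish at $\lambda^- = -i$, I would rebalance by a constant. Setting $\gamma := \widehat{\theta^- g}(-i)$, I propose the decomposition
\begin{equation*}
f = \underbrace{\bigl(\widehat{\theta^+ g} + c + \gamma\bigr)}_{=: f_+} + \underbrace{\bigl(\widehat{\theta^- g} - \gamma\bigr)}_{=: f_-^{\circ}}.
\end{equation*}
Here $f_+ \in W^+(\mathbb{R})$ since $W^+$ contains constants, and $f_-^{\circ} \in W^-(\mathbb{R})$ for the same reason; moreover $f_-^{\circ}(-i) = 0$ by construction, so $f_-^{\circ} \in \mathring{W}^-(\mathbb{R})$. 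This gives existence of the splitting $W(\mathbb{R}) = W^+(\mathbb{R}) + \mathring{W}^-(\mathbb{R})$.

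For uniqueness (i.e. that the sum is direct), I would suppose $f_+ + f_-^{\circ} = 0$ with $f_+ \in W^+(\mathbb{R})$ and $f_-^{\circ} \in \mathring{W}^-(\mathbb{R})$. Then $f_+$ and $-f_-^{\circ}$ agree on $\mathbb{R}$ and together define a function that is analytic on both $D_+$ and $D_-$ and continuous on $\dot{\mathbb{R}}$; by Morera's theorem (or direct analytic continuation across $\mathbb{R}$) it is entire, and since both pieces are continuous on the one-point compactification, the resulting entire function is bounded. Liouville's theorem forces it to be constant, and evaluating at $-i$ using $f_-^{\circ}(-i) = 0$ forces that constant to be $0$. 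Hence $f_+ = f_-^{\circ} = 0$, completing the proof.

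The only subtlety worth watching is the bookkeeping around the constant $c$ and the value $\gamma = \widehat{\theta^- g}(-i)$ — without the correction by $\gamma$, the lower part would not lie in the decorated subalgebra $\mathring{W}^-(\mathbb{R})$. Everything else is an immediate consequence of Lemma \ref{w0ralg} together with Remark \ref{ghatC+}.
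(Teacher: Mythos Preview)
Your proof is correct and follows essentially the same construction as the paper: split $g = \theta^+ g + \theta^- g$, invoke Remark \ref{ghatC+} to place $\widehat{\theta^\pm g}$ in $W^\pm(\mathbb{R})$, and adjust by a constant so that the minus-part vanishes at $-i$. Your $\gamma$ is exactly the paper's $-c_-$. You additionally supply a Liouville-type uniqueness argument for the directness of the sum, which the paper simply asserts without proof; this is a welcome addition rather than a deviation.
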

\begin{proof}
An abbreviated proof of this lemma is given in Chapter 2, Section 4, p.\,63 \cite{CG}.  A more detailed proof is included here for completeness. \\

Our method of proof is a direct construction. Suppose $f = \widehat{g} + c \in W(\mathbb{R})$ then, since $g = \theta^+g + \theta^- g$, we have
\begin{align*}
f & = \widehat{\theta^+ g} + \widehat{\theta^- g} + c \\
&= \big ( \widehat{\theta^+ g} + c_+ \big ) + \big ( \widehat{\theta^- g} + c_-\big )
\end{align*}
where $c= c_+ + c_-$, and $c_-$ is chosen such that
\begin{equation*}
(\widehat{\theta^- g})(-i) + c_- =0. \\
\end{equation*}

But since $g \in L_1(\mathbb{R})$, we have $\theta^\pm g \in L_1(\mathbb{R})$. Moreover, from Remark \ref{ghatC+}, we have $\widehat{\theta^\pm g} \in C^\pm(\dot{\mathbb{R}})$ and thus
\begin{equation*}
\widehat{\theta^\pm g} \in W(\mathbb{R}) \cap C^\pm(\dot{\mathbb{R}}).
\end{equation*}
In other words, we have the required decomposition, and thus
\begin{equation*}
W(\mathbb{R}) = W^+(\mathbb{R}) \oplus \mathring{W}^-(\mathbb{R})
\end{equation*}
where $\mathring{W}^-(\mathbb{R}) = \{ h \in W^-(\mathbb{R}) : h(-i)=0 \}$.
This completes the proof that $W(\mathbb{R})$ splits. \\
\end{proof}

\begin{remark} \label{mapWW}
For any $\varphi \in \mathcal{S}(\mathbb{R})$, we now define three integral operators: \begin{equation*}
\Pi^\pm \varphi (t) = \dfrac{(\pm 1)}{2 \pi i} \lim_{\epsilon \to 0} \int^\infty_{-\infty} \dfrac{\varphi (\tau) d\tau}{\tau - (t \pm i \epsilon)} ; \quad S_\mathbb{R}  \varphi (t) = \dfrac{1}{\pi i} \int^\infty_{-\infty} \dfrac{ \varphi(\tau)}{\tau -t} d\tau. \\
\end{equation*}
For more details see \cite{Es} and \cite{Ga}. Each of these operators is bounded on $S(\mathbb{R})$. Moreover, see Chapter II Section 5, pp.\,70-71 \cite{Es}, 
\begin{equation*}
\Pi^\pm \widehat{\varphi} = \widehat{\theta^\pm \varphi} .
\end{equation*}

But since $S(\mathbb{R})$ is dense in $W_0(\mathbb{R}) := \{ f \in W(\mathbb{R}) : f = \widehat{g}, \, \, g \in L_1(\mathbb{R}) \}$, each of the singular integral operators can be extended, by continuity, to a bounded operator on $W_0(\mathbb{R})$. \\

Finally, we have the well-known formulae
\begin{equation*}
\Pi^+ + \Pi^- = I; \quad \Pi^+ = \frac{1}{2}(I + S_{\mathbb{R}}); \quad \Pi^- = \frac{1}{2}(I - S_{\mathbb{R}}). \\
\end{equation*}
\end{remark}

\begin{lemma}
For $r=1,2, 3, \dots, \, W^r(\mathbb{R})$ is a Banach algebra with a norm that is equivalent to the norm
\begin{equation*}
\| f \|_{W^r} = \| f \|_W + \sum^r_{k=1} \| (1-it)^k D^k f(t) \|_W.
\end{equation*}
\end{lemma}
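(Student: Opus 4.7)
The plan is to establish three things in turn: the norm axioms, the Banach algebra (submultiplicativity) property via the Leibniz rule, and completeness via the embedding $W(\mathbb{R}) \hookrightarrow C_0^\infty(\mathbb{R})$. Throughout I shall use the $k=0$ convention $(1-it)^0 D^0 f = f$, so that
\begin{equation*}
\| f \|_{W^r} = \sum_{k=0}^{r} \| (1-it)^k D^k f(t) \|_W.
\end{equation*}
That this is a norm on the vector space $W^r(\mathbb{R})$ follows at once from the facts that each map $f \mapsto (1-it)^k D^k f$ is linear, that $\| \cdot \|_W$ is a norm, and that the $k=0$ term already controls positivity.

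For the algebra property, I would apply the Leibniz rule and split the weight $(1-it)^k$ as $(1-it)^j (1-it)^{k-j}$:
\begin{equation*}
(1-it)^k D^k (fg) = \sum_{j=0}^{k} \binom{k}{j} \big[ (1-it)^j D^j f \big] \cdot \big[ (1-it)^{k-j} D^{k-j} g \big].
\end{equation*}
Because $W(\mathbb{R})$ is a Banach algebra (a consequence of $\widehat{g_1} \widehat{g_2} = \widehat{g_1 \ast g_2}$), each summand lies in $W(\mathbb{R})$, hence so does the left-hand side, proving $fg \in W^r(\mathbb{R})$. Taking $W$-norms and then summing over $k=0,\dots,r$ yields an estimate of the form $\| fg \|_{W^r} \le C_r \| f \|_{W^r} \| g \|_{W^r}$ with an explicit combinatorial constant $C_r$ (essentially $2^r$). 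Submultiplicativity is then achieved by replacing $\| \cdot \|_{W^r}$ with the equivalent norm $C_r \| \cdot \|_{W^r}$; this accounts for the word ``equivalent'' in the statement.

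Completeness is the main technical point. Given a Cauchy sequence $\{f_n\}$ in $\| \cdot \|_{W^r}$, for each $k=0,\dots,r$ the sequence $g_{k,n}:=(1-it)^k D^k f_n$ is Cauchy in $W(\mathbb{R})$ and so converges in $W$ to some $g_k \in W(\mathbb{R})$. By \eqref{eq:hatnorm}, convergence in $\| \cdot \|_W$ implies uniform convergence on $\mathbb{R}$, and since $|1-it|^{-k} \le 1$, the sequence $D^k f_n = g_{k,n}/(1-it)^k$ converges uniformly on $\mathbb{R}$ to $g_k/(1-it)^k$. Setting $f := g_0$, I would then apply the classical theorem (uniform convergence of functions together with uniform convergence of derivatives implies differentiability of the limit with the expected derivative) inductively in $k$ to conclude that $f$ is $r$ times continuously differentiable with $D^k f = g_k/(1-it)^k$. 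Rewriting this as $(1-it)^k D^k f = g_k \in W(\mathbb{R})$ gives $f \in W^r(\mathbb{R})$, and the convergence $f_n \to f$ in $\| \cdot \|_{W^r}$ is immediate from the definition.

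The principal obstacle is this last completeness step: the $W^r$-norm only controls the weighted derivatives in $W$, so one has to check explicitly that the limiting objects $g_k$ really are of the form $(1-it)^k D^k f$ for a single function $f$, rather than an unrelated family. The embedding into $C_0^\infty(\mathbb{R})$ is precisely what bridges this gap, by converting $W$-convergence into uniform convergence so that the classical differentiation theorem becomes applicable.
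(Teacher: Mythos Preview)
Your argument is correct and, on the algebra side, essentially identical to the paper's: both use the Leibniz rule with the splitting $(1-it)^k = (1-it)^j(1-it)^{k-j}$ to get $\|fg\|_{W^r}\le C_r\|f\|_{W^r}\|g\|_{W^r}$, and both then invoke an equivalent renorming to obtain genuine submultiplicativity (the paper cites Rudin's Theorem 10.2 rather than doing the explicit rescaling by $C_r$). The only difference is that the paper dismisses completeness as ``straightforward'' without proof, whereas you supply the details via the embedding $W(\mathbb{R})\hookrightarrow C_0^\infty(\mathbb{R})$ and the classical uniform-limit-of-derivatives theorem; that addition is sound and fills a gap the paper leaves to the reader.
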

\begin{proof}
The proof that $W^r(\mathbb{R})$ is a Banach algebra is straightforward. However, as an illustration, we will prove that given $f_1, f_2 \in W^r(\mathbb{R})$, the product $f_1 f_2 \in W^r(\mathbb{R})$ and $\| f_1 f_2 \|_{W^r} \leq C_r \| f_1 \|_{W^r} \| f_2 \|_{W^r}$, for some constant $C_r$ that only depends on $r$.\\

The existence of a norm $\| \cdot \|'_{W^r}$ equivalent to $\| \cdot \|_{W^r}$ and such that $\| f_1 f_2 \|'_{W^r} \leq  \| f_1 \|'_{W^r} \| f_2 \|'_{W^r}$ is then guaranteed by Theorem 10.2, p.\,246 \cite{RuFA}. \\

Suppose $f_1, f_2 \in W^r(\mathbb{R})$. Then, for any integer $p$ satisfying $1 \leq p \leq r$, 
\begin{equation*}
(1-it)^p D^p_t [f_1(t)f_2(t)] = \sum^p_{k=0} \binom{p}{k} [(1-it)^k D^k f_1] [(1-it)^{p-k} D^{p-k} f_2].
\end{equation*}
We assume that $W(\mathbb{R})$ is a Banach algebra and therefore, $f_1f_2 \in W(\mathbb{R})$ and $ (1-it)^p D^p [f_1(t)f_2(t)] \in W(\mathbb{R})$. Hence, $f_1 f_2 \in W^r(\mathbb{R})$ as required. \\

By definition,  $\| f_1 f_2 \|_{W^r}$
\begin{align*}
&= \| f_1 f_2 \|_W + \sum^r_{k=1} \| (1-it)^k D^k [f_1 f_2] \|_W \\
&=  \| f_1 f_2 \|_W + \sum^r_{k=1} \| \sum^k_{j=0} \binom{k}{j} [(1-it)^j D^j f_1] [(1-it)^{k-j} D^{k-j} f_2] \|_W \\
& \leq  \| f_1\|_W \| f_2 \|_W + \sum^r_{k=1}  \sum^k_{j=0} \binom{k}{j} \|(1-it)^j D^j f_1\|_W \|(1-it)^{k-j} D^{k-j} f_2 \|_W \\
& \leq C_r \| f_1 \|_{W^r} \| f_2 \|_{W^r},
\end{align*}
where the strictly positive constant $C_r$ only depends on the integer $r$. This completes the proof of the lemma. \\
\end{proof}

We now show that $W^r(\mathbb{R})$ splits. To do this, we will need two intermediate lemmas.

\begin{lemma} \label{Commute}
Suppose $f(t), \, Df(t) \in W(\mathbb{R})$ and $\lim_{t \to \pm \infty} f(t) = 0$. Then $\Pi^\pm Df(t) = D \Pi^\pm f(t)$.
\end{lemma}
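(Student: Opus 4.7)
The plan is to work on the Fourier side, where the projections $\Pi^\pm$ act as multiplication by the characteristic functions $\theta^\pm$ (by Remark \ref{mapWW}) and differentiation $D$ corresponds to multiplication by $ix$. Since multiplication by $\theta^\pm$ and multiplication by $ix$ trivially commute as operators on functions of $x$, the identity $\Pi^\pm D = D \Pi^\pm$ should fall out immediately, provided we can justify the Fourier-side manipulations and ensure that all relevant functions are pure Fourier transforms of $L_1$-functions.

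First, I would show that both $f$ and $Df$ belong to the subalgebra $W_0(\mathbb{R})$, so that the formula $\Pi^\pm \widehat{\varphi} = \widehat{\theta^\pm \varphi}$ applies directly. Writing $f = \widehat{g} + c$ with $g \in L_1(\mathbb{R})$, the hypothesis $f(t) \to 0$ at $\pm\infty$, together with \eqref{eq:hatnorm} which gives $\widehat{g} \in C^\infty_0(\mathbb{R})$, forces $c = 0$, so $f = \widehat{g}$. Writing $Df = \widehat{h} + d$ with $h \in L_1(\mathbb{R})$, the continuity of $Df$ and $\widehat{h}(t) \to 0$ show that $Df(t) \to d$ as $|t| \to \infty$; but $f(T) - f(-T) = \int_{-T}^{T} Df(t)\,dt$ tends to $0$ by hypothesis, while the right-hand side behaves like $2dT$ if $d \neq 0$, a contradiction. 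Hence $d = 0$ and $Df = \widehat{h}$.

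Next, I would identify $h$. Differentiating $\widehat{g}$ as a tempered distribution gives $D\widehat{g} = \widehat{ixg}$, so $\widehat{h} = \widehat{ixg}$ as distributions; injectivity of the Fourier transform on tempered distributions then yields $h = ixg$ almost everywhere. In particular, $xg \in L_1(\mathbb{R})$, so $x\theta^\pm g \in L_1(\mathbb{R})$ as well, and classical differentiation under the Fourier integral sign is justified, giving $D\widehat{\theta^\pm g} = \widehat{ix \theta^\pm g}$. Assembling the pieces,
\begin{equation*}
D \Pi^\pm f \;=\; D \widehat{\theta^\pm g} \;=\; \widehat{ix \cdot \theta^\pm g} \;=\; \widehat{\theta^\pm \cdot ixg} \;=\; \Pi^\pm \widehat{ixg} \;=\; \Pi^\pm \widehat{h} \;=\; \Pi^\pm Df,
\end{equation*}
which is the claimed identity.

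The main obstacle is the preliminary bookkeeping: establishing that the Wiener constants in $f$ and in $Df$ both vanish, and then identifying $h = ixg$ so that the integrability of $xg$ (and hence $x\theta^\pm g$) is available. Once these are in place, the conclusion is a one-line Fourier computation resting solely on the commutativity of the multiplication operators $\theta^\pm$ and $ix$.
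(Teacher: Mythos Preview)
Your argument is correct, but it differs from the route taken in the paper. The paper's proof is essentially a two-line reduction: it quotes from Gahov \cite{Ga} the identity $D S_{\mathbb{R}} f = S_{\mathbb{R}} D f$ (valid under exactly the hypotheses $f, Df \in W(\mathbb{R})$ and $f(t)\to 0$ at $\pm\infty$), and then invokes the formulae $\Pi^\pm = \tfrac{1}{2}(I \pm S_{\mathbb{R}})$ from Remark~\ref{mapWW} to conclude. No Fourier-side analysis is carried out at all.

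Your approach, by contrast, is entirely self-contained within the Wiener-algebra framework already set up in the paper: you verify that the constants in the Wiener decompositions of $f$ and $Df$ both vanish, identify the $L_1$ preimage of $Df$ as $ixg$ (thereby establishing the useful auxiliary fact $xg \in L_1(\mathbb{R})$), and then observe that the multiplication operators by $\theta^\pm$ and by $ix$ commute. This buys independence from the external reference and makes transparent exactly where the hypotheses are used, at the cost of some additional bookkeeping. The paper's route is shorter but outsources the analytic content to \cite{Ga}; yours is longer but more explicit, and in fact dovetails naturally with the Fourier-side viewpoint that underlies the splitting argument in Lemma~\ref{WrSplits}.
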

\begin{proof} 
From Chapter I, Section 4.4, p.\,31 \cite{Ga}, we have
\begin{equation*}
D S_{\mathbb{R}} f(t) = S_{\mathbb{R}} D f(t)
\end{equation*}
But, from Remark \ref{mapWW} we have $\Pi^\pm = \frac{1}{2} (I \pm S_{\mathbb{R}})$ respectively, and so
\begin{equation*}
D \Pi^\pm f(t) = \Pi^\pm D f(t). \\
\end{equation*}
\end{proof}

\begin{lemma} \label{tPi}
Suppose $f(t), \, tf(t) \in W(\mathbb{R})$ and $\lim_{t \to \pm \infty} f(t) = 0$. Let $[tI, \Pi^\pm]$ denotes the commutator of $tI$ and $\Pi^\pm$. Then $[tI, \Pi^\pm]f \in \mathbb{C}$.
\end{lemma}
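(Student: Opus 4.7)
My plan is to work on the Fourier side via the identity $\Pi^\pm \widehat{g} = \widehat{\theta^\pm g}$ from Remark~\ref{mapWW}, together with the Fourier duality between multiplication by $t$ and distributional differentiation. The hypotheses yield $f = \widehat{g_0}$ with $g_0 \in L_1(\mathbb{R})$ (since $\lim_{\pm\infty} f = 0$), and $tf = \widehat{g_1} + c_1$ with $g_1 \in L_1(\mathbb{R})$ and constant $c_1 \in \mathbb{C}$. Equating $tf = i\widehat{Dg_0}$, which follows from the standard identity $t\widehat{h}(t) = i\widehat{(Dh)}(t)$ (integration by parts), shows that, as a tempered distribution, $Dg_0 = -ig_1 - ic_1 \sqrt{2\pi}\,\delta_0$; equivalently, $g_0$ is absolutely continuous off the origin with classical derivative $-ig_1 \in L_1$, and possesses one-sided limits $g_0(0^\pm)$ at $0$ satisfying $g_0(0^+)-g_0(0^-) = -ic_1 \sqrt{2\pi}$.

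The main computation then runs as follows. The distributional derivative of $\theta^+ g_0$ is $D(\theta^+ g_0) = g_0(0^+)\,\delta_0 + \theta^+ Dg_0$. Applying the duality once more, and using $\widehat{\delta_0} = 1/\sqrt{2\pi}$, gives
\begin{equation*}
t\,\Pi^+ f \;=\; t\,\widehat{\theta^+ g_0} \;=\; i\,\widehat{D(\theta^+ g_0)} \;=\; \frac{i\,g_0(0^+)}{\sqrt{2\pi}} + \widehat{\theta^+ g_1}.
\end{equation*}
On the other hand, since $\widehat{g_1} \in W_0(\mathbb{R})$ we have $\Pi^+(\widehat{g_1}) = \widehat{\theta^+ g_1}$, while the natural extension of $\Pi^+$ to $W(\mathbb{R})$ via the splitting $W(\mathbb{R}) = W^+(\mathbb{R}) \oplus \mathring{W}^-(\mathbb{R})$ of Lemma~\ref{Wsplits} sends the constant $c_1$ to a constant. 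Subtracting yields
\begin{equation*}
[tI,\,\Pi^+]\,f \;=\; \frac{i\,g_0(0^+)}{\sqrt{2\pi}} - \Pi^+(c_1) \;\in\; \mathbb{C},
\end{equation*}
and an analogous computation based on $D(\theta^- g_0) = -g_0(0^-)\,\delta_0 + \theta^- Dg_0$ handles the $\Pi^-$ case.

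The principal obstacle will be making these distributional identities rigorous at the level of functions in $W(\mathbb{R})$, in particular verifying that $t\widehat{h} = i\widehat{Dh}$ is valid with $h = \theta^+ g_0$, which is not smooth and has a jump at $0$. I would justify this either by approximating $g_0$ in $L_1(\mathbb{R})$ by smooth compactly supported functions and passing to the limit, using $\|\widehat{\varphi}\|_\infty \leq \|\varphi\|_1$ from \eqref{eq:hatnorm} to control convergence on the Fourier side, or by first reducing to the case $c_1 = 0$ via the decomposition $f = \tilde{f} + c_1/(t+i)$: the piece $c_1/(t+i) \in W^+(\mathbb{R})$ is rational and its commutator with $tI$ can be computed by hand, while the remainder $\tilde{f}$ and $t\tilde{f}$ then both lie in $W_0(\mathbb{R})$ and the whole argument proceeds classically without any distributional derivatives appearing.
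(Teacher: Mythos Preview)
Your approach is correct but takes a genuinely different route from the paper's. The paper's proof is a four-line computation using the Cauchy-integral representation: since $\Pi^{\pm}=\tfrac12(I\pm S_{\mathbb R})$ (Remark~\ref{mapWW}), one has $[tI,\Pi^{+}]f=\tfrac12[tI,S_{\mathbb R}]f$, and then the elementary identity $(t-\tau)/(\tau-t)=-1$ collapses the singular integral to
\[
[tI,\Pi^{+}]f=\frac{-1}{\pi i}\int_{-\infty}^{\infty} f(\tau)\,d\tau\in\mathbb C,
\]
with $[tI,\Pi^{-}]=-[tI,\Pi^{+}]$ following from $\Pi^{-}=I-\Pi^{+}$. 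No Fourier analysis, no distributional derivatives, no one-sided limits of $g_0$, and no need to decide how $\Pi^{+}$ acts on constants.

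Your Fourier-side argument recovers the same conclusion and in fact yields a more explicit formula for the constant in terms of $g_0(0^{+})$; the reduction you propose (subtracting $c_1/(t+i)$ to force $c_1=0$) cleanly removes the only genuine technical obstacle, namely the extension of $\Pi^{+}$ to constants. So both proofs are valid, but the paper's is considerably shorter and avoids the distributional detour entirely. If you want to keep your approach, the $c_1=0$ reduction is the right way to make it rigorous; otherwise, the singular-integral computation via $S_{\mathbb R}$ is the more economical choice.
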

\begin{proof}
Suppose $f(t), \, tf(t) \in W(\mathbb{R})$. Then 
\begin{align*}
[tI, \Pi^+]f &= (t I\Pi^+ - \Pi^+ tI)f \\
&= \frac{1}{2} (t(I + S_\mathbb{R}) - (I  + S_\mathbb{R})tI)f \quad \text{(by Remark } \ref{mapWW}) \\
&=  \frac{1}{2} (t S_\mathbb{R} - S_\mathbb{R}t)f \\
&= \frac{t}{\pi i} \int^\infty_{-\infty} \frac{f(\tau) d\tau}{\tau - t} - \frac{1}{\pi i} \int^\infty_{-\infty} \frac{\tau f(\tau) d\tau}{\tau - t} \\
&= \frac{1}{\pi i} \int^\infty_{-\infty} \frac{(t-\tau)f(\tau) d\tau}{\tau - t} \\
&= \frac{(-1)}{\pi i} \int^\infty_{-\infty} f(\tau) d\tau \\
& \in \mathbb{C}
\end{align*}
Finally, we note that $[tI, \Pi^-] = [tI, I - \Pi^+] = [tI, I]-[tI, \Pi^+] = 0 - [tI, \Pi^+]$ and hence  $[tI, \Pi^-]\in \mathbb{C}$. This completes the proof of the lemma. \\
\end{proof}

\begin{lemma} \label{WrSplits}
For $r=0,1,2, \dots$ the algebra $W^r(\mathbb{R})$ splits.
\end{lemma}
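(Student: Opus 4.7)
The plan is to show that the direct sum decomposition of Lemma \ref{Wsplits}, which treats $r = 0$, automatically respects the finer $W^r$ structure. Given $f \in W^r(\mathbb{R})$ with $r \geq 1$, write $f = \widehat{g} + c$ and set $f^\pm := \widehat{\theta^\pm g} + c_\pm$ exactly as in that lemma, with $c_-$ chosen so that $f^-(-i) = 0$. Since $f^\pm \in W^\pm(\mathbb{R})$ is already known and the argument for $f^-$ is symmetric, the task reduces to proving that $(1-it)^k D^k f^+ \in W(\mathbb{R})$ for each $k = 1, \ldots, r$.

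The first move is to write $D^k f^+ = \Pi^+ D^k f$ by applying Lemma \ref{Commute} $k$ times to $f^+ - c_+ = \Pi^+ \widehat{g}$; the hypotheses are satisfied because $D^j f \in W(\mathbb{R})$ (dividing $(1-it)^j D^j f$ by $(1-it)^j \in W$) and $D^j f$ tends to $0$ at $\pm\infty$ (bounded by $|1-it|^{-j}$ times a constant). The second move is to slide the factor $(1-it)^k$ through $\Pi^+$ using the commutator identity $[(1-it),\Pi^+] = -i[tI,\Pi^+]$. Iteration produces
\[
(1-it)^k \Pi^+ D^k f = \Pi^+\bigl[(1-it)^k D^k f\bigr] + \sum_{j=0}^{k-1}(1-it)^j\bigl[(1-it),\Pi^+\bigr]\bigl((1-it)^{k-1-j} D^k f\bigr),
\]
and, after checking that each argument $(1-it)^{k-1-j} D^k f$ and its product with $t$ lie in $W(\mathbb{R})$ and vanish at $\pm\infty$, Lemma \ref{tPi} replaces the $j$th commutator by the constant $\tfrac{1}{\pi}\int (1-it)^{k-1-j} D^k f\,dt$.

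The crux of the proof will then be to show that every one of these integrals vanishes, so that the polynomial error disappears and the right-hand side of the commutator identity is $\Pi^+[(1-it)^k D^k f] \in W^+(\mathbb{R})$. My strategy is repeated integration by parts, using $D_t[(1-it)^n] = -in(1-it)^{n-1}$; after $n$ steps the integral $\int (1-it)^n D^k f\,dt$ collapses, modulo boundary contributions of the shape $\bigl[(1-it)^{n-i} D^{k-1-i} f\bigr]_{-\infty}^{\infty}$, to $i^n n!\,\bigl[D^{k-n-1}f\bigr]_{-\infty}^{\infty}$. Each such boundary difference is zero because every $W(\mathbb{R})$-function has the same limit at $+\infty$ as at $-\infty$ (Riemann-Lebesgue applied to its non-constant part), and in particular $D^j f \to 0$ whenever $j \geq 1$.

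I expect the most delicate point to be a careful verification that the iterated integrals actually converge, since the naive decay bound $(1-it)^n D^k f = O(|t|^{n-k})$ only gives $L_1$-integrability for $n \leq k-2$. The borderline case $n = k - 1$ is saved by the sharper observation, itself established by a short induction using boundedness of $(1-it)^j D^j f$ in $W(\mathbb{R})$ for $j \leq k-1$, that $(1-it)^k D^k f$ must in fact tend to zero at $\pm\infty$, forcing $D^k f = o(|t|^{-k})$. Once that is settled, the same argument applied to $\Pi^-$ places $f^-$ in $\mathring{W}^{r,-}(\mathbb{R})$, yielding the required direct-sum decomposition $W^r(\mathbb{R}) = W^{r,+}(\mathbb{R}) \oplus \mathring{W}^{r,-}(\mathbb{R})$.
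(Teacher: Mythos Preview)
Your approach is essentially the paper's: reduce to the case $\lim_{t\to\pm\infty}f(t)=0$, write $f=\Pi^+f+\Pi^-f$, and use Lemmas~\ref{Commute} and~\ref{tPi} to show $(1-it)^kD^k\Pi^\pm f\in W(\mathbb{R})$. The paper packages this as an induction on $r$, absorbing the commutator contributions into a single constant $c'$, whereas you expand the commutator sum directly; these are two presentations of the same computation.

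There is one genuine slip. Your proposed rescue of the borderline case $n=k-1$ does not work: the decay $D^kf=o(|t|^{-k})$ yields only $(1-it)^{k-1}D^kf=o(|t|^{-1})$, and $o(|t|^{-1})$ is not enough for $L_1$-integrability (consider $1/(|t|\log|t|)$ for large $|t|$). Fortunately you do not need that case at all. In your expansion the $j$th term is $(1-it)^j\,c_j$; for the sum to lie in $W(\mathbb{R})$ you only need $c_j=0$ for $j\geq 1$, since the $j=0$ term is already a constant and constants belong to $W(\mathbb{R})$. The indices $j\geq 1$ correspond to $n=k-1-j\leq k-2$, and there every integrand in your iterated integration by parts is $O(|t|^{n-k})=O(|t|^{-2})$, so the argument is rigorous and the boundary terms vanish exactly as you describe. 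Drop the overclaim that \emph{all} the $c_j$ vanish, keep the surviving constant $c_0$, and the proof is complete --- and in fact your explicit verification that $c_j=0$ for $j\geq 1$ fills in a step that the paper's induction (where the lower-level constant is silently multiplied by $(t+i)$) passes over rather quickly.
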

\begin{proof}
Suppose $f(t) \in W^r(\mathbb{R})$ for some nonnegative integer $r$. Since $f(t) \in W(\mathbb{R})$, it is enough to consider the case where $\lim_{t \to \pm \infty} f(t) = 0$. Moreover, by Remarks  \ref{ghatC+} and \ref{mapWW}, we can write
\begin{equation*} 
f(t) = \Pi^+ f(t) + \Pi^- f(t), \quad \Pi^\pm f \in W(\mathbb{R}) \cap C^\pm(\dot{\mathbb{R}}) .
\end{equation*}

Thus, to complete the proof, we must show that $\Pi^\pm f(t) \in W^r(\mathbb{R})$. That is, we must prove that for $k = 0,1, \dots r$ we have $(1-it)^k D^k \Pi^\pm f(t) = i^{-k}(t+i)^k D^k \Pi^\pm f(t) \in W(\mathbb{R})$. \\

We now proceed by induction on $r$. Our inductive hypothesis is that for any $f \in W^r(\mathbb{R})$, we have $(t+i)^r D^r \Pi^\pm f(t) =  (\Pi^\pm (t+i)^r D^r f(t) + c) \in W(\mathbb{R})$. We have previously proved this result for $r=0$. Suppose that the inductive hypothesis holds for $k=0, \dots , (r-1)$.  \\

From Lemma \ref{Commute}, 
\begin{align*}
(t+i)^r D^r \Pi^\pm f & = t \cdot (t+i)^{r-1} D^r \Pi^\pm f  + i \cdot (t+i)^{r-1} D^r \Pi^\pm f \\
& = t \cdot (t+i)^{r-1} D^{r-1} \Pi^\pm (Df)  + i \cdot (t+i)^{r-1} D^{r-1} \Pi^\pm (Df). 
\end{align*}
But since $Df \in W^{r-1}(\mathbb{R})$, applying the inductive hypothesis gives
\begin{align*}
(t+i)^r D^r \Pi^\pm f & = t \cdot \Pi^\pm (t+i)^{r-1} D^{r-1} (Df)  + i \cdot \Pi^\pm (t+i)^{r-1} D^{r-1}  (Df)  + c \\
 & = t \cdot \Pi^\pm (t+i)^{r-1} D^r f  + i \cdot \Pi^\pm (t+i)^{r-1} D^r f  + c. 
\end{align*}
Hence, using Lemma \ref{tPi} (applied to $(t+i)^{r-1}D^r f$) gives
\begin{align*}
(t+i)^r D^r \Pi^\pm f &=  \Pi^\pm t (t+i)^{r-1} D^r f   + \Pi^\pm i(t+i)^{r-1} D^r f  + c^\prime\\
&=  \Pi^\pm (t+i)^{r} D^{r} f  + c^\prime \\
& \in W(\mathbb{R}).
\end{align*}
This completes the proof by induction.  So, finally, for $k = 0,1, \dots r$, we have $(1-it)^k D^k \Pi^\pm f(t) \in W(\mathbb{R})$ and thus, for $r=0,1,2, \dots$, the algebra $W^r(\mathbb{R})$ splits. \\
\end{proof}

Our final objective in this section is to show that $W^r(\mathbb{R})$ is an R-algebra for $r=1,2, 3, \dots$, noting that in Lemma \ref{w0ralg}, we have proved this result for the special case $W(\mathbb{R})$, corresponding to $r=0$. \\

In Appendix \ref{AppWrApprox} we show that the Fourier transforms of smooth functions with compact support and which are zero in a neighbourhood of $x=0$, are dense in the space $W^r(\mathbb{R})$. Then, proceeding analogously to Lemma \ref{w0ralg}, it is enough for us to show that we can approximate $\widehat{\theta^+ h}$, where $h \in C^\infty_c(\mathbb{R})$ and is zero near $0$, arbitrarily closely in the $W^r(\mathbb{R})$ norm by a function  $\widehat{\theta^+ s}$ that is rational and has no poles in the upper half plane. \\

As previously, for $x \geq 0$, we set $y = e^{-x}$ and define
\[
 \psi(y) =
  \begin{cases}
  h(- \log (y)) /y & \text{if } y \in (0,1] \\
   0       & \text{if } y=0.
  \end{cases}
\]
Since $h(x)$ has compact support, $\psi(y)$ is identically zero in some interval $[0, \nu)$, where $\nu > 0$. Thus, by construction, $\psi(y) \in C^\infty [0,1]$. \\

\begin{remark}
The motivation for choosing the Bernstein polynomial, $(B_M \psi)(y)$, see \cite{Lo}, as the approximant to $\psi(y)$ in Lemma \ref{w0ralg}, is that we can \textit{simultaneously} choose $M = M(\epsilon)$ such that for $1 \leq j \leq r$
\begin{equation*}
\sup_{y \in [0,1]} | \psi(y) - (B_M \psi) (y) | < \epsilon \quad \text{and } \sup_{y \in [0,1]} | D^j_y\psi (y) - D^j_y (B_M \psi)(y) | < \epsilon. \\
\end{equation*} 
\end{remark} 

Given that $y = e^{-x}$, we can consider $\psi(y)$ in terms of $x$, as given by the equation $\psi(y) = e^x h(x)$. The following lemma expresses the derivatives of $\psi(y)$ in terms of the derivatives of $h(x)$.

\begin{lemma} \label{Djpsi}
\begin{equation*}
D^j_y \psi(y) = (-1)^j e^{(j+1)x} (D_x + 1) \dots (D_x+j) h(x) \text{ for } j = 1,2, \dots.
\end{equation*}
\end{lemma}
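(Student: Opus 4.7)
The plan is a straightforward induction on $j$, driven by a single chain-rule identity. Since $y = e^{-x}$ gives $dy/dx = -e^{-x} = -y$, the inverse derivative reads $D_y = -e^{x} D_x$ when both sides act on functions expressed in $x$. The whole lemma is essentially bookkeeping around this fact.

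For the base case $j=1$, starting from $\psi(y) = e^{x} h(x)$, I would compute
\begin{equation*}
D_y \psi = -e^{x} D_x\bigl(e^{x} h(x)\bigr) = -e^{2x}\bigl(h(x) + h'(x)\bigr) = -e^{2x}(D_x+1) h(x),
\end{equation*}
which agrees with the stated formula at $j=1$. For the inductive step, assume the formula holds at level $j$ and apply $D_y = -e^{x} D_x$ to $D^j_y \psi$. The Leibniz rule yields
\begin{equation*}
D_x\bigl[e^{(j+1)x}(D_x+1)\cdots(D_x+j)h\bigr] = (j+1)e^{(j+1)x}(D_x+1)\cdots(D_x+j)h + e^{(j+1)x}D_x(D_x+1)\cdots(D_x+j)h,
\end{equation*}
and multiplying by $-e^{x}\cdot(-1)^j$ produces
\begin{equation*}
(-1)^{j+1} e^{(j+2)x}\bigl[D_x + (j+1)\bigr](D_x+1)\cdots(D_x+j)\,h(x).
\end{equation*}
Because each $(D_x + k)$ is a polynomial in $D_x$, these operators commute pairwise, so the prefixed factor $(D_x+(j+1))$ may be moved to the right end of the product, giving $(D_x+1)\cdots(D_x+(j+1))$ and completing the induction.

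There is no real obstacle here: the only things to watch are the sign coming from $D_y = -e^{x} D_x$ and the extra scalar $(j+1)$ arising when $D_x$ hits the exponential, both of which combine cleanly with the commutativity of the $(D_x+k)$'s to close the recursion. In particular, no smoothness or decay properties of $h$ beyond those already available from $h \in C^\infty_c(\mathbb{R})$ are needed.
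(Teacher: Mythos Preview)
Your proof is correct and essentially identical to the paper's: both argue by induction on $j$, handle the base case by direct computation of $D_y(e^x h)$, and for the inductive step apply $D_y = -e^{x} D_x$ to the hypothesis, use the product rule on $e^{(j+1)x}$ times the operator expression, and then commute the resulting factor $D_x + (j+1)$ past the others. The only cosmetic difference is that the paper writes the chain rule as $D_x[D^m_y\psi] = D^{m+1}_y\psi \cdot (dy/dx)$ and solves for $D^{m+1}_y\psi$, whereas you package this as the operator identity $D_y = -e^{x}D_x$ from the outset.
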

\begin{proof}
Note that, by definition, $y = e^{-x}$ and $\psi(y) = e^x h(x)$. We use proof by induction on $j$. \\

Suppose $j=1$.   Then $D_x \psi(y) = D_y \psi(y) \cdot (dy/dx)$ and hence
\begin{equation*}
D_y \psi(y) = - e^x D_x (e^x h) = - e^x ( e^x h + e^x D_x h ) = (-1)e^{2x}(D_x + 1)h,
\end{equation*}
completing the first step of the inductive proof. \\

Now suppose the result is true for $j=m$. Then, by the inductive hypothesis
\begin{equation*}
D_x [ D^m_y \psi(y) ] = D_x [ (-1)^m e^{(m+1)x} (D_x + 1) \dots (D_x+m) h(x) ].
\end{equation*}
Hence
\begin{align*}
D^{m+1}_y \psi(y)(dy/dx) & = (-1)^m (m+1) e^{(m+1)x} (D_x + 1) \dots (D_x+m) h(x) \\
& \quad + (-1)^m e^{(m+1)x} D_x(D_x + 1) \dots (D_x+m) h(x). 
\end{align*}
Therefore
\begin{align*}
D^{m+1}_y \psi(y) & = (-1)^{m+1} e^{(m+2)x}[m+1+D_x] (D_x + 1) \dots (D_x+m) h(x) \\
& = (-1)^{m+1} (m+1) e^{(m+2)x} (D_x + 1) \dots (D_x+m) (D_x +m+1)h(x),
\end{align*}
proving the result for $j = m+1$. This completes the proof by induction. \\
\end{proof}

Motivated by Lemma \ref{Djpsi}, for $j=0,1,2 \dots$, we now define:
\[
 h_j(x) =
  \begin{cases}
   (D_x+1) \dots (D_x + j) h(x) & \text{if } j > 0 \\
   h(x)       & \text{if } j=0.
  \end{cases}
\]
Hence, we can write 
\begin{equation} \label{Djpsiy}
D^j_y \psi(y) = (-1)^j e^{(j+1)x} h_j(x) \quad j=0,1,2 \dots. \\
\end{equation}

In exactly the same way, given $y = e^{-x}$ and $(B_M\psi)(y) = S(x)$ we define $T(x) = S(x) e^{-x}$. Hence, $(B_M\psi)(y) = e^x T(x)$ and
\begin{equation*}
D^j_y (B_M\psi)(y) = (-1)^j e^{(j+1)x} (D_x + 1) \dots (D_x+j) T(x) \text{ for } j = 1,2, \dots.
\end{equation*}
Analogously, for $j=0,1,2 \dots$ we define:
\[
 T_j(x) =
  \begin{cases}
   (D_x+1) \dots (D_x + j) T(x) & \text{if } j > 0 \\
   T(x)       & \text{if } j=0.
  \end{cases}
\]
Hence, we can similarly write 
\begin{equation} \label{DjBMy}
D^j_y (B_M\psi)(y) = (-1)^j e^{(j+1)x} T_j(x) \quad j=0,1,2 \dots,
\end{equation} 
and we can now express our approximations in terms of the variable $x$.\\

\begin{remark}
Using equations \eqref{Djpsiy} and \eqref{DjBMy}, we can now reformulate the Bernstein polynomial, $(B_M \psi)(y)$, approximations to $\psi(y)$ and its derivatives as
\begin{equation} \label{g0approx}
\sup_{x \in [0,\infty)} | e^x h_0(x) - e^x T_0(x)| < \epsilon.
\end{equation} 
and for $1 \leq j \leq r$
\begin{align}  \label{gjapprox}
\sup_{x \in [0,\infty)} & | e^{(j+1)x} h_j(x) - e^{(j+1)x} T_j(x) | < \epsilon. \\
\nonumber
\end{align}
\end{remark}


\begin{lemma} \label{WrRalgebra}
For $r=1,2,3, \dots \, W^{r}(\mathbb{R})$ is an R-algebra.
\end{lemma}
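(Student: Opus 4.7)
The plan is to parallel the proof of Lemma \ref{w0ralg}, upgrading the single $L_1$-estimate on $\theta^+ h - \theta^+ T$ to simultaneous control of all weighted derivatives needed for the $W^r$-norm. I would first check that $W^r(\mathbb{R})$ contains every rational function with no poles on $\dot{\mathbb{R}}$: for any such $q(t)$, partial fractions reduce $(1-it)^k D^k q(t)$ to another proper rational function with the same pole set, which lies in $W(\mathbb{R})$ by Lemma \ref{w0ralg}. The substance of the lemma is therefore the density assertion.

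By the density result from Appendix \ref{AppWrApprox}, it suffices to approximate $\widehat{\theta^+ h}$, for $h \in C^\infty_c(\mathbb{R})$ vanishing in a neighbourhood of $0$, in the $W^r$-norm by a function $\widehat{\theta^+ s} \in R^+(\dot{\mathbb{R}})$; the $\theta^-$ half is handled symmetrically, and the splitting $\widehat{h} = \widehat{\theta^+ h} + \widehat{\theta^- h}$ is valid in $W^r(\mathbb{R})$ by Lemma \ref{WrSplits}. The candidate approximant $T(x) = e^{-x} S(x)$ is constructed exactly as in Lemma \ref{w0ralg} via the Bernstein polynomial $(B_M \psi)(y) = \sum_{k=0}^M b_k y^k$ of $\psi(y) = h(-\log y)/y$ (with $\psi(0) := 0$); by choosing $M = M(\epsilon, r)$ large enough the simultaneous estimates \eqref{g0approx} and \eqref{gjapprox} hold for all $0 \leq j \leq r$, and the Fourier transform $\widehat{\theta^+ T}$ is a rational function with poles only in the lower half-plane.

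The core task is then the estimate $\|\widehat{\theta^+(h - T)}\|_{W^r} \leq C_r \epsilon$. Inverting the triangular system $h_j = (D+1)\cdots (D+j)h$ (and the same for $T$) expresses each $D^p(h - T)$, $0 \leq p \leq r$, as a fixed integer combination $\sum_{j=0}^p \alpha_{p,j} (h_j - T_j)$, and since $|h_j(x) - T_j(x)| \leq \epsilon\, e^{-(j+1)x} \leq \epsilon\, e^{-x}$ on $x \geq 0$ by \eqref{g0approx} and \eqref{gjapprox}, this yields $|D^p(h - T)(x)| \leq C_r\,\epsilon\, e^{-x}$. A preliminary verification is required that $\theta^+(h - T) \in C^r(\mathbb{R})$, which reduces to proving $D^j T(0) = 0$ for $0 \leq j \leq r$; this holds because for $M$ large enough $D^j(B_M \psi)(1)$ depends only on the values of $\psi$ in a neighbourhood of $y = 1$, where $\psi \equiv 0$, and \eqref{DjBMy} combined with another triangular inversion converts $T_j(0) = 0$ into $D^j T(0) = 0$.

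With these two ingredients in place, the Fourier identities $D^k_t \widehat{g}(t) = \widehat{(ix)^k g}(t)$ and $(1 - it)^k \widehat{g}(t) = \widehat{(1 + D_x)^k g}(t)$ — the latter valid by the $C^r$-regularity and exponential decay of $\theta^+(h - T)$, which eliminate the boundary terms in the iterated integration by parts — reduce each summand $\|(1-it)^k D^k_t\widehat{\theta^+(h - T)}\|_W$ to an $L_1$-bound on an integer combination of terms of the form $x^{k-i}\theta^+ D^{k+j-i}(h - T)$. Each such term is dominated by $C_{r,k}\epsilon\,x^m e^{-x}$ for some $m \leq r$ and is integrable with $L_1$-norm at most $C_r\epsilon$, so summation over $k$ delivers the desired $W^r$-estimate. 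The main obstacle is the joint bookkeeping between the Bernstein estimates (phrased in the variable $y$) and the $W^r$-derivatives (taken in $t$); the bridge is the combinatorial identity of Lemma \ref{Djpsi} linking $(D_x + 1)\cdots (D_x + j)$ with $D^j_y$, together with the observation that the intrinsic $e^{(j+1)x}$ prefactors on the left-hand sides of \eqref{g0approx} and \eqref{gjapprox} collapse, after the triangular inversion, into a uniform exponential decay $e^{-x}$ that is integrable against any polynomial weight $x^m$ with $m \leq r$.
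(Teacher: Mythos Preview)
Your proposal is correct and follows essentially the same route as the paper: Bernstein approximation of $\psi(y)$, the change of variables of Lemma~\ref{Djpsi} linking $D^j_y\psi$ to $(D_x+1)\cdots(D_x+j)h$, and then bounding every weighted $L_1$-piece of the $W^r$-norm by $\epsilon\int_0^\infty x^m e^{-cx}\,dx$. You add two points the paper leaves implicit---the inclusion $R(\dot{\mathbb{R}})\subset W^r(\mathbb{R})$ and the verification that $D^jT(0)=0$ for $M$ large (needed to commute $\theta^+$ past $D^j_x$ when passing from $D^j_x(x^k\theta^+(h-T))$ to $\theta^+ x^k D^j_x(h-T)$)---both of which are valid refinements.
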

\begin{proof}
Our proposed approximant to $\theta^+ h(x)$ is $\theta^+ S(x)e^{-x}$. From Appendix \ref{AppWrApprox}, to show convergence to $\widehat{\theta^+ h}$ in $\| \cdot \|_{W^r}$, it is sufficient to show convergence to $\theta^+ h, x^k (\theta^+ h)$ and $D^j_x(x^k  (\theta^+ h))$ in $\| \cdot \|_{L_1}$ for all $1 \leq j \leq k \leq r$. \\

Of course, one important consequence of the fact that our smooth function $h$ is zero in a neighbourhood of $0$ is that it implies that $\theta^+h$ is also smooth. \\

We have already seen in Lemma \ref{w0ralg} that
\begin{equation*}
\| \theta^+ h(x) - \theta^+ S(x)e^{-x} \|_{L_1} < \epsilon.
\end{equation*}

Similarly, for $1 \leq k \leq r$ we have 
\begin{align*}
\| \theta^+x^k h(x) - \theta^+ x^k S(x) e^{-x} \|_{L_1} & = \int^\infty_0 | x^k h(x) - x^k S(x) e^{-x} | \, dx \\
& = \int^\infty_0 | e^x h(x)  - S(x)  | \, x^k e^{-x} \, dx \\
& = \int^\infty_0 | e^x h_0(x)  - e^x T_0 (x)  | \, x^k e^{-x} \, dx \\
& \leq \epsilon \int^\infty_0 x^k e^{-x}dx \quad \text{by } \eqref{g0approx} \\
& = (k!) \,  \epsilon, \quad \text{since } \int^\infty_0 x^k e^{-x}dx=k!. 
\end{align*}

Suppose that $j  \geq 1$. Clearly, there exist constants $\{ c_l : 0 \leq l \leq j \}$, \textit{that only depend on $j$}, such that
\begin{equation*}
D^j_x h = \sum^j_{l=0} c_l h_l \, ; \quad D^j_x T = \sum^j_{l=0} c_l T_l,
\end{equation*}
where $h_0 = h$, and $h_l = (D_x + 1) \dots (D_x + l)h$ for $l > 0$, and $T_0 = T = S e^{-x}$, and $T_l = (D_x + 1) \dots (D_x + l)T$ for $l > 0$. \\

Hence, for $1 \leq j \leq k \leq r$
\begin{align*}
\| \theta^+ x^k D^j_x h - \theta^+ x^k D^j_x (S e^{-x}) \|_{L_1} & = \| \theta^+ x^k \sum^j_{l=0} c_l (h_l  - T_l )\|_{L_1} \\
& \leq \sum^j_{l=0} |c_l|  \cdot \| \theta^+ x^k (h_l  - T_l )\|_{L_1} \\
& = \sum^j_{l=0} |c_l|  \int^\infty_0 | x^k (h_l(x)  - T_l(x) ) | \, dx \\
& = \sum^j_{l=0} |c_l|  \int^\infty_0 | e^{(l+1)x}h_l(x)  - e^{(l+1)x}T_l(x) | e^{-(l+1)x}x^k \, dx \\
&  \leq \epsilon \sum^j_{l=0} |c_l|  \int^\infty_0 e^{-(l+1)x}x^k \, dx \quad \text{by } \eqref{gjapprox}\\
&  \leq \epsilon \sum^j_{l=0} |c_l| \, \dfrac{k!}{(l+1)^{k+1}}. 
\end{align*}

Therefore, $W^{r}(\mathbb{R})$ is an R-algebra, as required. 
\end{proof}
\newpage
\section{Proof of the second result}
The objective of this section is to prove Theorem \ref{ESLemma1.9}. In determining certain asymptotic estimates for matrices arising during factorization, we follow the approach of Duduchava \cite{Du}. (For full details, see Appendix \ref{AppMatrixFactors}.)

\begin{proof}
We begin by defining 
 \begin{equation} \label{A0DefinitionStart} 
 A_0(\xi',\xi_n) := (|\xi'|^2+|\xi_n|^2)^{-\mu/2} \, A(\xi',\xi_n). 
 \end{equation}
For fixed $\xi' \not =0$, we set
\begin{equation*}
\omega := \dfrac{\xi'}{|\xi'|}; \quad t := \dfrac{\xi_n}{|\xi'|}.
\end{equation*}
From Theorem \ref{MainResultCase3},  for fixed $\omega \in S^{n-2}$,
\begin{equation*}
A^*_0(\omega, t)= (t-i)^{-\zeta} B_-(t)  h^{-1} c A_0(\omega, t) h B^{-1}_+(t) (t+i)^{\zeta} \in W^{r+2}_{N \times N}(\mathbb{R}).
\end{equation*}

Moreover,  from Lemmas \ref{WrSplits} and \ref{WrRalgebra}, $W^{r+2}(\mathbb{R})$ is a splitting R-algebra. Hence, by Theorem \ref{thm:ralg}, the matrix $A^*_0(\omega,t)$ admits a right standard factorization. \\

Therefore, we can write $c A_0(\omega, t)$
\begin{align*}
&= hB^{-1}_-(t) (t-i)^{\zeta}A^*_0(\omega, t)(t+i)^{-\zeta}B_+(t)h^{-1} \\
&= hB^{-1}_-(t) (t-i)^{\zeta} \bigg [ (A^*_-(\omega, t))^{-1} \text{diag} \bigg ( \dfrac{t-i}{t+i} \bigg )^{{\kappa(\omega)}} A^*_+(\omega, t) \bigg ] (t+i)^{-\zeta}B_+(t)h^{-1},
\end{align*}
where the factors $(A^*_\pm)^{\pm 1} \in W^{r+2}_{N \times N}(\mathbb{R})$, and have analytic extensions, with respect to $\xi_n$, to the lower half-plane and the upper half-plane respectively. \\

Moreover, see p.\,37 \cite{GetAl}, since $\lim_{t \to \pm \infty} A^*_0(\omega,t) =I$, there exist factors $A^*_\pm \in W^{r+2}_{N \times N}( \mathbb{R})$ such that
\begin{equation} \label{limA*pm}
\lim_{t \to \pm \infty} A^*_\pm (\omega,t) = I. \\
\end{equation}
We now define
\begin{equation} \label{A1pmdefinition}
A^\pm_1(\omega, t) :=  (t \pm i)^\zeta \, A^*_\pm(\omega, t) \, (t\pm i)^{-\zeta}.
\end{equation}

Hence, as diagonal matrices commute,
\begin{equation*}
c A_0(\omega, t) = hB^{-1}_-(t) (A^-_1(\omega, t))^{-1} \text{diag} \bigg ( \dfrac{t-i}{t+i} \bigg )^{{\kappa(\omega)} + \zeta} A^+_1(\omega, t) B_+(t)h^{-1}. \\
\end{equation*}

From equation \eqref{A1pmdefinition}
\begin{equation*}
(A^\pm_1)_{j,k} = (t+i)^{\zeta_j - \zeta_k} \, (A^*_\pm)_{j,k}.
\end{equation*}
Suppose $j \not = k$. Then, from Lemma \ref{Apm1jnotk},
\begin{equation*}
\lim_{t \to \pm \infty} (A^\pm_1(w,t))_{j,k} = 0 \quad (j \not = k).
\end{equation*}
Given this result for the off-diagonal terms of $A^\pm_1(w,t)$ and equations \eqref{limA*pm} and \eqref{A1pmdefinition}, we have
\begin{equation*}
\lim_{t \to \pm \infty} A^\pm_1(w,t) = I. \\
\end{equation*}

Further, if we set 
\begin{equation*}
A^\pm_0(\omega, t) := A^\pm_1(\omega, t) \,  B_\pm(t) h^{-1},
\end{equation*}
then we can write
\begin{equation*}
c A_0(\omega, t) =  (A^-_0(\omega, t))^{-1} \, \text{diag} \bigg ( \dfrac{t-i}{t+i} \bigg )^{{\kappa(\omega)} + \zeta} A^+_0(\omega, t).
\end{equation*}
Now, by definition,
\begin{align*}
A^\pm_0 &= A^\pm_1 B_\pm h^{-1} \\
&=  \big [ (A^\pm_1 - I) + I \big] B_\pm h^{-1} \\
&=  B_\pm \big [ B^{-1}_\pm(A^\pm_1 - I) B_\pm + I \big]  h^{-1} \\
&= B_\pm A^\pm_2 h^{-1},
\end{align*}
where we now define
\begin{equation} \label{A2pmdefinition}
A^\pm_2(\omega, t) := B^{-1}_\pm(t) \, (A^\pm_1(\omega, t) - I) \, B_\pm(t)  + I.
\end{equation}

\begin{remark}
We have already noted that the factors $(A^*_\pm)^{\pm 1}$ have analytic extensions, with respect to $\xi_n$, to the lower half-plane and the upper half-plane respectively. From definitions \eqref{A1pmdefinition} and \eqref{A2pmdefinition}, it is immediately clear that this property is also shared by the factors $(A^\pm_1)^{\pm 1}$ and $(A^\pm_2)^{\pm 1}$. \\
\end{remark}

\begin{remark}
From Lemma \ref{Apm2inWr}
\begin{equation*}
 [(A^\pm_2)^{\pm 1}]_{j,k} \in W^{r}(\mathbb{R}) \,\, \text{for } 1 \leq j,k \leq N.
\end{equation*}
In particular, each element of the matrices $(A^\pm_2)^{\pm 1}$ satisfies a condition of the form:
\begin{equation*} 
\sum_{0 \leq q \leq r} \operatorname{ess} \sup_{{\xi_n} \in \mathbb{R}} | \xi^q_n D^q_{\xi_n}  (A^\pm_2(\xi', \xi_n))_{j,k}| < + \infty. \\
\end{equation*}
\end{remark}

Finally, we have the required factorization, namely
\begin{align} \label{A0FinalFact}
c A_0(\omega, t) & = h (A^-_2)^{-1} B^{-1}_- \text{diag} \bigg ( \dfrac{t-i}{t+i} \bigg )^{{\kappa(\omega)} + \zeta} B_+A^+_2 h^{-1} \nonumber \\
& = h (A^-_2)^{-1} d(\omega, t) A^+_2 h^{-1},
\end{align}
where
\begin{equation*}
d(\omega, t) := B^{-1}_-(t) \, \text{diag} \bigg ( \dfrac{t-i}{t+i} \bigg )^{{\kappa(\omega)} + \zeta} \, B_+(t). \\
\end{equation*}

\begin{remark} \label{Bdiagcommute}
Note that, by construction, the matrix-valued functions $B_\pm(\xi_n)$ commute with the diagonal matrix $(\xi_n \pm i )^\zeta$. [To see this, choose an arbitrary block $J_k(\lambda_k)$. On this block, $(\xi_n \pm i )^\zeta$ acts like a scalar, since the relevant components of the vector $\zeta$ are all equal to $- (\log \lambda_k)/(2 \pi i)$]. \\
\end{remark}

By Remark \ref{Bdiagcommute}, equation \eqref{Bpmdef} which defines $B_\pm(t)$, together with the properties of the blocks (see equations \eqref{Bab} and \eqref{B-a}), we can write
\begin{align*}
d(\omega, t) & = \text{diag} \bigg (\dfrac{t-i}{t+i} \bigg )^{{\kappa(\omega)} + \zeta} B^{-1}_-(t) B_+(t) \\
& = \text{diag} \bigg (\dfrac{t-i}{t+i} \bigg )^{{\kappa(\omega)} + \zeta} \,\, \text{diag } \bigg [ B^{m_1} \bigg ( \dfrac{1}{2 \pi i} \log \dfrac{t+i}{t-i} \bigg ) , \dots,  B^{m_l} \bigg ( \dfrac{1}{2 \pi i} \log\dfrac{t+i}{t-i} \bigg )  \bigg ].\\
\end{align*}

\begin{remark}
We note that, by definition, 
\begin{equation*}
t = \dfrac{\xi_n}{|\xi'|}  \quad \text{and} \quad \dfrac{t+i}{t-i} = \dfrac{\xi_n + i |\xi'|}{\xi_n - i |\xi'|}.
\end{equation*}
Hence, functions of $t$ or $(t+i)/(t-i)$ are homogeneous in the variable $\xi = (\xi', \xi_n)$. \\
\end{remark}

It remains to consider the sum and partial sums of the factorization indices. For fixed $\xi'$, our final factorization, see equation \eqref{A0FinalFact}, is
\begin{equation*}
c A_0(\omega, t) = h (A^-_2)^{-1} d(\omega,t) A^+_2 h^{-1}.
\end{equation*}
Hence, since $c,h$ are constant matrices and $\lim_{t \to \pm \infty} A^\pm_2 = I$, we have
\begin{align}  \label{Dargdetd1}
& \Delta \arg \det \big[ (|\xi'|^2 + |\xi_n|^2)^{-\mu/2} A_\omega(\xi',\xi_n) \big ]  \big |^{\xi_n=+\infty}_{\xi_n = -\infty} \nonumber \\
&=  \Delta \arg \det A_0(\xi', \xi_n)  \big |^{\xi_n=+\infty}_{\xi_n = - \infty} \quad ( \text{see equation } \eqref{A0DefinitionStart} ) \nonumber \\
& = \Delta \arg \det d(\omega,t) \big |^{t=+\infty}_{t = - \infty}.
\end{align}
Now $d(\omega,t)$ is a lower triangular matrix, and hence its determinant is the product of the entries on its main diagonal. Thus
\begin{align*}
\det d(\omega,t) & = \displaystyle \prod^N_{k=1} \bigg ( \dfrac{t-i}{t+i} \bigg)^{\kappa_k(\omega) + \zeta_k}.
\end{align*}
Therefore, see Chapter II Section 6, p.\,88 \cite{Es}.
\begin{equation} \label{Dargdetd2}
\dfrac{1}{2 \pi} \Delta \arg \det d(\omega,t) \big |^{t=+\infty}_{t = - \infty} = \sum^N_{k=1} \kappa_k(\omega) + \sum^N_{k=1} \operatorname{Re} \zeta_k.
\end{equation}
From equations \eqref{Dargdetd1} and \eqref{Dargdetd2} we have
\begin{equation*}
\dfrac{1}{2 \pi} \Delta \arg \det \big[ (|\xi'|^2 + |\xi_n|^2)^{-\mu/2} A_\omega(\xi',\xi_n) \big ]  \big |^{\xi_n=+\infty}_{\xi_n = -\infty} = \sum^N_{k=1} \kappa_k(\omega) + \sum^N_{k=1} \operatorname{Re} \zeta_k.
\end{equation*}
The remaining assertions in Theorem \ref{ESLemma1.9}, concerning the continuity of the sum and the semicontinuity of the partial sums of the factorization indices, can be found in Theorem 3.1 p.\,113 \cite{Sh}. 
\end{proof}

\newpage
\appendix 
\appendixpage
\addappheadtotoc

\section{Proof of key theorem from Shamir} \label{AppShamir}
We shall give a proof of Theorem \ref{MainResultCase3} in two steps:
\begin{align*}
\text{(i)} \quad &E_+^{-1}E_- \text{ is similar to } \text{diag}[\lambda_1, \dots , \lambda_N] \\
\text{(ii)} \quad &\text{The general case.} 
\end{align*}
Our overall approach will be to reduce the general case to the simpler case. \\
	
We begin by establishing some simple decay estimates.
\begin{lemma} \label{Decay}
Suppose that $A_0(\xi^\prime, \xi_n) \in C_{N \times N}^{r+3}(\mathbb{S}^{n-1})$ is a matrix-valued function which is homogeneous of degree $0$. Then, for fixed $\xi^\prime \not =0$,
\begin{equation} 
D^k_{\xi_n} [ A_0(\xi^\prime, \xi_n) - E_\pm ] = O ( |\xi_n|^{-k-1} ) \quad \xi_n \rightarrow \pm \infty; \quad 0 \leq k \leq (r+3),
\end{equation}
where these estimates are uniform for $\xi' \in \mathbb{S}^{n-2}$. \\
\end{lemma}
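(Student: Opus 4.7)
The plan is to exploit the degree-zero homogeneity to rewrite $A_0(\xi',\xi_n)$ as $\tilde A^{\pm}(\xi'/|\xi_n|)$ for a smooth matrix-valued function $\tilde A^{\pm}$ defined near $\eta=0\in\mathbb{R}^{n-1}$, after which the decay estimates become a standard Taylor/chain-rule exercise.

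For $\xi_n \neq 0$, homogeneity of degree zero (for positive scalars) gives
\[
A_0(\xi',\xi_n) \;=\; A_0\big(\xi'/|\xi_n|,\, \operatorname{sgn}\xi_n\big) \;=:\; \tilde A^{\pm}\big(\xi'/|\xi_n|\big),
\]
where $\tilde A^{\pm}(\eta):= A_0(\eta,\pm 1)$ is defined and $C^{r+3}$ on an open neighbourhood of $0 \in \mathbb{R}^{n-1}$, since the $C^{r+3}$ hypothesis on $\mathbb{S}^{n-1}$ extends by homogeneity to a $C^{r+3}$ function on $\mathbb{R}^n\setminus\{0\}$. In particular $E_{\pm} = \tilde A^{\pm}(0)$. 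The case $k=0$ is then immediate: first-order Taylor at $0$ yields $\tilde A^{\pm}(\eta)-\tilde A^{\pm}(0) = O(|\eta|)$ uniformly for small $|\eta|$, whence $A_0(\xi',\xi_n)-E_{\pm} = O(|\xi'|/|\xi_n|)=O(|\xi_n|^{-1})$ uniformly for $\xi'\in\mathbb{S}^{n-2}$.

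For $1 \leq k \leq r+3$ I would apply Fa\`a di Bruno's formula to the composition $\xi_n \mapsto \tilde A^{\pm}(\xi'/|\xi_n|)$. A generic term takes the shape
\[
(D^{p}\tilde A^{\pm})\big(\xi'/|\xi_n|\big)\cdot\prod_{j=1}^{p}\partial_{\xi_n}^{m_j}\big(\xi'/|\xi_n|\big),\qquad m_1+\cdots+m_p=k,\;\;m_j\geq 1.
\]
A direct computation gives $|\partial_{\xi_n}^{m}(\xi_i'/|\xi_n|)| = m!\,|\xi_i'|\,|\xi_n|^{-m-1} = O(|\xi_n|^{-m-1})$ as $\xi_n\to\pm\infty$, so the product of $p$ such factors decays like $|\xi_n|^{-k-p}$; the slowest rate (occurring at $p=1$) is precisely the claimed $|\xi_n|^{-k-1}$. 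The outer factor $(D^{p}\tilde A^{\pm})$ is bounded uniformly because $p\leq k\leq r+3$ and $\tilde A^{\pm}\in C^{r+3}$ near $0$; uniformity in $\xi'\in\mathbb{S}^{n-2}$ follows from compactness together with $|\xi'|\leq 1$.

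The main obstacle is purely bookkeeping: organising the chain-rule expansion so that each contributing term is seen to decay at least as $|\xi_n|^{-k-1}$, with equality realised by the single-block partition. No ellipticity input is required for this lemma; the proof relies solely on homogeneity and the $C^{r+3}$ smoothness hypothesis.
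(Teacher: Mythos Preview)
Your proposal is correct and follows essentially the same approach as the paper: both exploit degree-zero homogeneity to write $A_0(\xi',\xi_n)=A_0(\xi'/\xi_n,\pm 1)$ and then read off the decay from Taylor expansion and the chain rule. The paper's proof is terser---for $k\geq 1$ it simply asserts that $D^k_{\xi_n}A_0(\xi'\xi_n^{-1},1)=O(|\xi_n|^{-k-1})$ without spelling out the combinatorics---whereas your Fa\`a di Bruno argument makes that step explicit and transparent.
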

\begin{proof}
Suppose that $\xi_n \rightarrow \infty$. Then since $A_0$ is homogeneous of degree $0$,
\begin{align*}
A_0(\xi^\prime, \xi_n) - E_+ & = A_0(\xi^\prime \xi_n^{-1}, 1) - A_0(0, 1) \\
& = \sum^{n-1}_{j=1} \dfrac{\partial A_0}{\partial \xi_j}(0,1) \dfrac{\xi_j}{\xi_n} + O(|\xi_n|^{-2}) = O(|\xi_n|^{-1}), \quad \xi_n \rightarrow \infty .
\end{align*}
This completes the proof for $k=0$. \\

For $1 \leq k \leq (r+3)$ we can ignore the constant matrix $E_+$, and we readily obtain
\begin{equation*}
D^k_{\xi_n} A_0(\xi^\prime, \xi_n) = D^k_{\xi_n} A_0(\xi^\prime \xi_n^{-1}, 1 ) = O ( |\xi_n|^{-k-1} ), \quad \xi_n \rightarrow \infty. 
\end{equation*}
Of course, estimates for the case $\xi_n \rightarrow -\infty$ follow in exactly the same way. This completes the proof of the lemma. \\
\end{proof}

Let us consider the first step. We are assuming that the invertible matrix $E_+^{-1}E_-$ is similar to diag$[\lambda_1, \dots , \lambda_N]$. In this formulation the eigenvalues $\lambda_j, \, j=1, \dots, N$ are listed according to their multiplicity and, of course, are all non-zero. \\

We now define $\zeta = (\zeta_1, \dots , \zeta_N)$ by
\begin{equation} \label{zetadef}
\zeta_j = - (\log \lambda_j)/(2 \pi i)  \quad j = 1, \dots , N. \\
\end{equation}

\begin{remark}
The definition of $\zeta$, given by equation \eqref{zetadef}, includes a multiplicative factor $(-1)$ not shown in \cite{Sh}. As will be seen, this modification allows us to correct an error in the treatment of the discontinuity across the negative real axis. (See Lemma 4.2, \cite{Sh}.) \\
\end{remark}

\begin{lemma} \label{Case2}
Suppose that $A_0(\xi^\prime, \xi_n) \in C_{N \times N}^{r+3}(\mathbb{S}^{n-1})$ is a matrix-valued function which is homogeneous of degree $0$ and elliptic. Suppose further that for some invertible constant matrix $h_1$,
\begin{equation}
E= E_+^{-1}E_-  = h_1 \text{diag}\, [\lambda_1, \dots , \lambda_N] h^{-1}_1.
\end{equation}
If $\zeta_j = - (\log \lambda_j)/ 2 \pi i$, for $j=1, \dots N, \,\, \zeta = (\zeta_1, \dots \zeta_N)$ and $c:= A^{-1}_0(0, \dots, 0,1)$, then for fixed $\xi^\prime \not =0$,
\begin{equation*}
A^*_0(\xi', \xi_n) := (\xi_n - i)^{-\zeta}  h^{-1}_1 c A_0(\xi^\prime, \xi_n) h_1  (\xi_n + i)^{\zeta} \in W^{r+2}_{N \times N}(\mathbb {R}),
\end{equation*}
and
\begin{equation*} 
\lim_{\xi_n \to \pm \infty} A^*_0(\xi', \xi_n) = I. \\
\end{equation*}
\end{lemma}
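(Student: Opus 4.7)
The plan is to reduce the claim to decay estimates on the real line and then to invoke Lemma \ref{Wrtest} entrywise. Let $M(\xi_n) := h_1^{-1} c\, A_0(\xi', \xi_n)\, h_1$ and $\phi_{jk}(\xi_n) := (\xi_n - i)^{-\zeta_j}(\xi_n+i)^{\zeta_k}$, so that $(A_0^*)_{jk} = \phi_{jk} M_{jk}$. Since $c E_+ = I$ and $c E_- = E = h_1 \Lambda h_1^{-1}$ with $\Lambda := \operatorname{diag}[\lambda_1, \dots, \lambda_N]$, Lemma \ref{Decay} gives
\[
D^k_{\xi_n}(M - I) = O(|\xi_n|^{-k-1}) \text{ as } \xi_n \to +\infty, \qquad D^k_{\xi_n}(M - \Lambda) = O(|\xi_n|^{-k-1}) \text{ as } \xi_n \to -\infty,
\]
uniformly for $0 \leq k \leq r+3$. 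Each differentiation of $\phi_{jk}$ produces a factor of the form $(\xi_n \pm i)^{-1}$ times a constant, so by induction $D^l_{\xi_n} \phi_{jk} = O(|\xi_n|^{\operatorname{Re} \zeta_k - \operatorname{Re} \zeta_j - l})$.

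For the limits at $\pm\infty$, the crucial observation is that $\log(\xi_n + i) - \log(\xi_n - i) \to 0$ as $\xi_n \to +\infty$, whereas it tends to $2\pi i$ as $\xi_n \to -\infty$, the jump being the principal-branch discontinuity across the negative real axis recorded in the limits of $\alpha_+ - \alpha_-$. Hence $\phi_{jj} \to 1$ at $+\infty$ and $\phi_{jj} \to e^{2\pi i \zeta_j} = 1/\lambda_j$ at $-\infty$; paired with $M_{jj} \to 1$ and $M_{jj} \to \lambda_j$, each diagonal entry of $A_0^*$ tends to $1$. For $j \neq k$ we have $|\phi_{jk}| = O(|\xi_n|^{\operatorname{Re} \zeta_k - \operatorname{Re} \zeta_j})$ while $M_{jk} = O(|\xi_n|^{-1})$ (both $I$ and $\Lambda$ are diagonal), so $(A_0^*)_{jk} = O(|\xi_n|^{-\delta_0})$ by Remark \ref{delta0defn}, which vanishes at infinity.

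To show $A_0^* - I \in W^{r+2}_{N\times N}(\mathbb{R})$ I apply Lemma \ref{Wrtest} to each entry. Smoothness on $\mathbb{R}$ is immediate since $A_0 \in C^{r+3}$ and the principal branch cut of $\log(\xi_n \pm i)$ avoids the real axis. The off-diagonal case follows directly from Leibniz: combining the estimates above gives $D^l_{\xi_n} ((A_0^*)_{jk}) = O(|\xi_n|^{-l - \delta_0})$ for $j \neq k$, so Lemma \ref{Wrtest} applies with $\delta = \delta_0$. For the diagonal entries, write
\[
\phi_{jj} M_{jj} - 1 = (\phi_{jj} - c_{\pm})(M_{jj} - d_{\pm}) + c_{\pm}(M_{jj} - d_{\pm}) + d_{\pm}(\phi_{jj} - c_{\pm}),
\]
where $(c_+, d_+) = (1,1)$ and $(c_-, d_-) = (1/\lambda_j, \lambda_j)$ (so $c_{\pm} d_{\pm} = 1$). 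Taylor expanding $\zeta_j[\log(\xi_n + i) - \log(\xi_n - i)]$ about $0$ (at $+\infty$) or $2\pi i$ (at $-\infty$) yields $D^l_{\xi_n}(\phi_{jj} - c_{\pm}) = O(|\xi_n|^{-l-1})$; Leibniz then gives $D^l_{\xi_n}((A_0^*)_{jj} - 1) = O(|\xi_n|^{-l - 1})$, and Lemma \ref{Wrtest} applies with $\delta = 1$.

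The main obstacle is the bookkeeping around the principal-branch jump at $-\infty$. It is precisely the corrected sign in $\zeta_j = -(\log \lambda_j)/(2\pi i)$ that makes $e^{2\pi i \zeta_j} = 1/\lambda_j$ cancel the limit $\lambda_j$ of $M_{jj}$, producing the diagonal limit $1$; and the positivity of $\delta_0$ (Remark \ref{delta0defn}) is exactly what tames the mild growth of the off-diagonal prefactors $\phi_{jk}$. Beyond these two observations the argument is a mechanical application of Leibniz.
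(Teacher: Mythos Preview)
Your proof is correct and follows essentially the same approach as the paper: reduce via $\widetilde{A}_0 = h_1^{-1} c A_0 h_1$, split each entry into the deviation from its limit $E_\pm$ plus the limiting diagonal contribution, use Lemma~\ref{Decay} for the former and a careful principal-branch computation (yielding the $e^{2\pi i\zeta_j}\lambda_j = 1$ cancellation at $-\infty$) for the latter, then invoke Lemma~\ref{Wrtest}. The only difference is organizational---you separate diagonal and off-diagonal entries explicitly, whereas the paper writes the split as a matrix identity $(\xi_n-i)^{-\zeta}[\widetilde{A}_0 - E_\pm](\xi_n+i)^{\zeta} + (\xi_n-i)^{-\zeta} E_\pm (\xi_n+i)^{\zeta}$---but the substance is identical.
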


\begin{proof}
By hypothesis, we have $E= E_+^{-1}E_-  = h_1 \text{diag} \, [\lambda_1, \dots , \lambda_N] h^{-1}_1$. If we define $\widetilde{A}_0(\xi^\prime, \xi_n) := h^{-1}_1 c A_0(\xi^\prime, \xi_n) h_1$, we may assume, without loss of generality,  that 
\begin{equation*}
E_+ = I; \quad E_- = \text{diag} [\lambda_1, \dots , \lambda_N].
\end{equation*} 

We define a new matrix-valued function
\begin{equation} \label{Astar2}
A^*_0(\xi^\prime, \xi_n) = (\xi_n - i)^{-\zeta} \widetilde{A}_0(\xi^\prime, \xi_n) (\xi_n + i)^{\zeta}
\end{equation}
Then, for $\xi_n > 0$, we can write
\begin{equation*}
A^*_0(\xi^\prime, \xi_n) = (\xi_n - i)^{-\zeta} [\widetilde{A}_0(\xi^\prime, \xi_n) - E_+] (\xi_n + i)^{\zeta} + (\xi_n - i)^{-\zeta} E_+ (\xi_n + i)^{\zeta}
\end{equation*}
and similarly for $\xi_n < 0$, we have
\begin{equation*}
A^*_0(\xi^\prime, \xi_n) = (\xi_n - i)^{-\zeta} [\widetilde{A}_0(\xi^\prime, \xi_n) - E_-] (\xi_n + i)^{\zeta} + (\xi_n - i)^{-\zeta} E_- (\xi_n + i)^{\zeta}
\end{equation*}
Since the matrices $(\xi_n - i)^{-\zeta}$ and $(\xi_n + i)^{\zeta}$ are diagonal, we can write a typical element of the first summand as
\begin{equation} \label{Firstterm}
(\xi_n - i)^{-\zeta_j} [\widetilde{A}_0(\xi^\prime, \xi_n) - E_\pm]_{jl} (\xi_n + i)^{\zeta_l} = O(|\xi_n|^{- \text{Re } \zeta_j -1 + \text{Re } \zeta_l} )= O(|\xi_n|^{-\delta_0})
\end{equation}
using Lemma \ref{Decay} and equation \eqref{delta0}. \\

Now suppose $1 \leq k \leq (r+3)$ and $\alpha \in \mathbb{C}$. Then $D^k_{\xi_n} (\xi_n \pm i)^\alpha = C_{\alpha, k} (\xi_n \pm i)^{\alpha - k}$, where $C_{\alpha, k}, \, k=1,2, \dots $ are certain constants. Moreover, from Lemma \ref{Decay}, $D^k_{\xi_n}  [\widetilde{A}_0(\xi^\prime, \xi_n) - E_\pm] = O ( |\xi_n|^{-k-1} )$ as $\xi_n \rightarrow \pm \infty$. Hence, for $k=0, 1, \dots, r+3$, we can write
\begin{align} \label{FirsttermD}
& D^k_{\xi_n} \big \{  (\xi_n - i)^{-\zeta_j} [\widetilde{A}_0(\xi^\prime, \xi_n) - E_\pm]_{jl} (\xi_n + i)^{\zeta_l} \big \} \nonumber \\
& = O(|\xi_n|^{-\text{Re} \,\zeta_j + \text{Re} \,\zeta_l - k-1}) = O(|\xi_n|^{-\delta_0-k}). 
\end{align}

We now consider the second summand, which is diagonal as it is the product of diagonal matrices. For any $\alpha \in \mathbb{C}$ and $\xi_n \rightarrow \pm \infty$, it will be useful to factorize $(\xi_n \pm i)^\alpha$ using the following identity:
\begin{equation*}
(\xi_n \pm i)^\alpha = (\xi_n \pm i0)^\alpha \, (1 \pm i \xi_n^{-1})^\alpha 
\end{equation*}
noting, as expected, that the decomposition on the right-hand side preserves the modulus and argument of the left-hand side. \\

For $\xi_n > 0$, the $(j,j)$ entry of the second summand is given by
\begin{align*}
(\xi_n-i)^{-\zeta_j} 1\, (\xi_n + i)^{\zeta_j} &= (\xi_n-i0)^{-\zeta_j} (\xi_n + i0)^{\zeta_j} 1 \,(1 - i \xi_n^{-1})^{-\zeta_j}(1 + i \xi_n^{-1})^{\zeta_j}\\
&= (1 - i \xi_n^{-1})^{-\zeta_j}(1 + i \xi_n^{-1})^{\zeta_j}
\end{align*}
since the product of the first two terms is $1$. \\

Similarly, for $\xi_n < 0$ we have
\begin{align*}
(\xi_n-i)^{-\zeta_j} \lambda_j (\xi_n + i)^{\zeta_j} &= (\xi_n-i0)^{-\zeta_j} (\xi_n + i0)^{\zeta_j} \lambda_j (1 - i \xi_n^{-1})^{-\zeta_j}(1 + i \xi_n^{-1})^{\zeta_j}\\
&= e^{-\zeta_j \log |\xi_n|} e^{i \zeta_j \pi} e^{\zeta_j \log |\xi_n|} e^{i \zeta_j \pi} \lambda_j (1 - i \xi_n^{-1})^{-\zeta_j}(1 + i \xi_n^{-1})^{\zeta_j}\\
&= e^{2 \pi i \zeta_j} \lambda_j (1 - i \xi_n^{-1})^{-\zeta_j}(1 + i \xi_n^{-1})^{\zeta_j}\\
&= (1 - i \xi_n^{-1})^{-\zeta_j}(1 + i \xi_n^{-1})^{\zeta_j}
\end{align*}
since $\lambda_j = e^{\log \lambda_j} = e^{- 2 \pi i \zeta_j}$ for $j = 1, \dots, N$ from equation \eqref{zetadef}. \\

So, for the second summand, combining the results for $\xi_n \rightarrow \pm \infty$  we have for $|\xi_n| > 1$
\begin{equation*}
(\xi_n - i)^{-\zeta} E_\pm (\xi_n + i)^{\zeta} - I  = (1 - i \xi_n^{-1})^{-\zeta}(1 + i \xi_n^{-1})^{\zeta} - I. 
\end{equation*}
So expanding the factors on the right-hand side in powers of $\xi_n^{-1}$  we have 
\begin{equation*}
(\xi_n - i)^{-\zeta} E_\pm (\xi_n + i)^{\zeta} - I = \sum^\infty_{l=1} A_l \xi_n^{-l} \quad \text{ for } |\xi_n| > 1.
\end{equation*}
Thus, on differentiating $k$ times with respect to $\xi_n$, we obtain
\begin{equation} \label{SecondtermD}
D^k_{\xi_n} \big \{(\xi_n - i)^{-\zeta} E_\pm (\xi_n + i)^{\zeta} - I \big \} = O ( |\xi_n|^{-1-k})
\end{equation}
for $k =0, 1, \dots, (r+3)$. Combining estimates \eqref{FirsttermD} and \eqref{SecondtermD}, we obtain
\begin{equation} \label{First+Second}
D^k_{\xi_n} \big \{ A^*_0(\xi^\prime, \xi_n) - I \big \} = O (|\xi_n|^{-\delta_0-k}), \quad |\xi_n| \rightarrow \infty.
\end{equation}
From Lemma \ref{Wrtest}, we have $A^*_0(\xi^\prime, \xi_n) \in W^{r+2}_{N \times N}(\mathbb {R})$. This completes the proof of the first step. \\
\end{proof}

With these preparations complete, we now turn to the general case. For convenience, we now restate Theorem \ref{MainResultCase3}.

\begin{lemma} 
Suppose that $A_0(\xi^\prime, \xi_n) \in C_{N \times N}^{r+3}(\mathbb{S}^{n-1})$ is a matrix-valued function which is homogeneous of degree $0$ and elliptic. Suppose that the Jordan form of $A_0^{-1}(0, \dots, 0,1) A_0(0, \dots, 0, -1)$ has blocks $J_k(\lambda_k)$ of size $m_k$ for $k=1, \dots, l$. Let $\zeta = (\zeta_1, \dots \zeta_N)$, where 
\begin{equation*}
\zeta_q = - (\log \lambda_j)/(2 \pi i)  \quad \text{ for } \sum^{j-1}_{p=1} m_p < q \leq \sum^{j}_{p=1} m_p, \quad q=1, \dots, N.
\end{equation*}
Let $c:= A^{-1}_0(0, \dots, 0,1)$. Then for fixed $\xi^\prime \not =0$,
\begin{equation}
A^*_0(\xi', \xi_n):= (\xi_n - i)^{-\zeta} B_-(\xi_n) h^{-1} c A_0(\xi^\prime, \xi_n) h B_+^{-1}(\xi_n) (\xi_n + i)^{\zeta} \in W^{r+2}_{N \times N}(\mathbb {R}),
\end{equation}
and
\begin{equation*} 
\lim_{\xi_n \to \pm \infty} A^*_0(\xi', \xi_n) = I. \\
\end{equation*}
\end{lemma}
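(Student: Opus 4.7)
My strategy is to mirror the proof of Lemma \ref{Case2} but to insert the matrices $B_\pm(\xi_n)$ so as to absorb the nilpotent part of each Jordan block. The structural key is that, on each $m_k \times m_k$ block, the diagonal matrices $(\xi_n \pm i)^{\pm\zeta}$ act as scalars $(\xi_n \pm i)^{\pm\zeta_k} I_{m_k}$, hence commute with $B^{m_k}(\alpha_\pm(\xi_n))$; and by \eqref{Bab} the matrices $B_-(\xi_n)$ and $B_+^{-1}(\xi_n)$ commute with each other on each block. I would split at $\xi_n = 0$. Noting that $c E_+ = I$ and $h^{-1} c E_- h = h^{-1} E h = K$, I decompose
\[
A^*_0(\xi',\xi_n) - I = R_1^\pm(\xi',\xi_n) + \bigl(R_2^\pm(\xi_n) - I\bigr),
\]
with
\[
R_1^\pm = (\xi_n - i)^{-\zeta} B_-(\xi_n) \bigl[h^{-1} c A_0(\xi',\xi_n) h - E^\sharp_\pm\bigr] B_+^{-1}(\xi_n) (\xi_n + i)^\zeta,
\]
\[
R_2^\pm = (\xi_n - i)^{-\zeta} B_-(\xi_n) E^\sharp_\pm B_+^{-1}(\xi_n) (\xi_n + i)^\zeta,
\]
where $E^\sharp_+ = I$ and $E^\sharp_- = K$. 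The goal is to show $D^q(A^*_0 - I) = O(|\xi_n|^{-q - \delta})$ for some $\delta > 0$ and all $q = 0, 1, \dots, r+3$, and then invoke Lemma \ref{Wrtest}.

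For $R_1^\pm$ I would use Lemma \ref{Decay} to obtain $D^q[h^{-1} c A_0 h - E^\sharp_\pm] = O(|\xi_n|^{-q-1})$; the entries of $B_\pm$ are polynomials in $\log(\xi_n \pm i)$ and therefore contribute at most $O((\log|\xi_n|)^{C}) = O(|\xi_n|^\epsilon)$ for any $\epsilon > 0$; and since $\zeta$ is constant on each block, the $(j,l)$-entry of $R_1^\pm$ carries a factor of magnitude $|\xi_n|^{-\operatorname{Re}\zeta_j + \operatorname{Re}\zeta_l} \leq |\xi_n|^{1 - \delta_0}$ by \eqref{delta0}. The combined rate is $D^q R_1^\pm = O(|\xi_n|^{-q - \delta_0 + \epsilon})$.

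For $R_2^\pm - I$ the block structure reduces everything to scalar identities. Setting $\beta(\xi_n) := \alpha_+(\xi_n) - \alpha_-(\xi_n)$ and using \eqref{Bab}--\eqref{B-a} together with the block commutations above,
\[
(R_2^+)_{[k]} = (\xi_n - i)^{-\zeta_k}(\xi_n + i)^{\zeta_k} \, B^{m_k}(-\beta),
\qquad
(R_2^-)_{[k]} = \lambda_k (\xi_n - i)^{-\zeta_k}(\xi_n + i)^{\zeta_k} \, B^{m_k}(1 - \beta).
\]
Using the principal branch, $(\xi_n - i)^{-\zeta_k}(\xi_n + i)^{\zeta_k} = \exp(2\pi i \zeta_k \beta(\xi_n))$. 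A direct expansion yields $\beta = O(1/\xi_n)$ as $\xi_n \to +\infty$ and $1 - \beta = O(1/|\xi_n|)$ as $\xi_n \to -\infty$, with $D^q\beta = O(|\xi_n|^{-q-1})$ for $q \geq 1$. The scalar factor tends to $1$ in the first case and to $e^{2\pi i \zeta_k} = \lambda_k^{-1}$ in the second, so the prefactor $\lambda_k$ cancels exactly. In both cases $R_2^\pm - I$ is an analytic function of the small parameter ($\beta$ or $1 - \beta$) that vanishes at zero, giving $D^q(R_2^\pm - I) = O(|\xi_n|^{-q-1})$. Combining both estimates yields $D^q(A^*_0 - I) = O(|\xi_n|^{-q - \delta_0/2})$ on each half-line for $q = 0, 1, \dots, r+3$; Lemma \ref{Wrtest} then places $A^*_0$ in $W^{r+2}_{N \times N}(\mathbb{R})$, and the limits follow immediately.

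The principal obstacle is the careful bookkeeping in the third paragraph: the cancellation $\lambda_k \cdot e^{2\pi i \zeta_k} = 1$ as $\xi_n \to -\infty$ is precisely what fixes the sign convention $\zeta_q = -(\log \lambda_j)/(2\pi i)$ flagged in the remark after Theorem \ref{MainResultCase3}, and verifying it requires careful tracking of the principal branch of $\log$ across the negative real axis, where $\arg(\xi_n + i) \to \pi^-$ while $\arg(\xi_n - i) \to -\pi^+$.
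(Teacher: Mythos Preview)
Your proposal is correct and follows essentially the same approach as the paper: the decomposition $A^*_0 - I = R_1^\pm + (R_2^\pm - I)$ into an ``error'' term controlled by Lemma~\ref{Decay} and an ``algebraic'' term handled block-by-block via \eqref{Bab}--\eqref{B-a}, followed by an appeal to Lemma~\ref{Wrtest}, is exactly the paper's strategy. Your packaging of the $R_2^\pm$ computation through $\beta = \alpha_+ - \alpha_-$ and the identity $(\xi_n-i)^{-\zeta_k}(\xi_n+i)^{\zeta_k} = \exp(2\pi i\zeta_k\beta)$ is a slightly cleaner bookkeeping of the same calculation the paper carries out explicitly in Lemma~\ref{Case2} for $\xi_n > 0$ and $\xi_n < 0$ separately, but the mathematical content, including the absorption of logarithmic factors at the cost of an arbitrarily small $\epsilon$ in the exponent, is identical.
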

\begin{proof} 
By hypothesis, and using equation \eqref{EB1}, we have 
\begin{equation*}
E= E_+^{-1}E_-  = h  \, \text{diag} \, \big [ \lambda_1 B^{m_1}(1), \dots, \lambda_l B^{m_l}(1) \big ] h^{-1}, 
\end{equation*}
for some invertible matrix $h$. If we define $\widetilde{A}_0(\xi^\prime, \xi_n) := h^{-1} c A_0(\xi^\prime, \xi_n) h$ we may assume, without loss of generality,  that 
\begin{equation*} 
E_+ = I; \quad E_-= \text{diag} \, \big [ \lambda_1 B^{m_1}(1), \dots, \lambda_l B^{m_l}(1) \big ]. \\
\end{equation*}
Mimicing the approach in the first case, we define
\begin{equation*}
\zeta^\prime_j = - (\log \lambda_j)/(2 \pi i); \quad -1/2 \leq \text{Re } \zeta^\prime_j < 1/2 \quad \text{where } j=1, \dots, l.
\end{equation*}
Moreover, we calculate
\begin{equation} 
\min_{1 \leq j,k \leq l} ( 1 - \text{Re } \zeta^\prime_k + \text{Re } \zeta^\prime_j) = \delta_0 > 0. \\
\end{equation}
We now define
\begin{equation*}
\zeta_q = \zeta^\prime_j  \quad \text{ for } \sum^{j-1}_{p=1} m_p < q \leq \sum^{j}_{p=1} m_p, \quad q=1, \dots, N.
\end{equation*}
Now we can set $\zeta = (\zeta_1, \dots \zeta_N)$, exactly as in the first case. \\

\begin{remark} 
When regarded as a function of $z \in \mathbb{C}$, the matrix-valued functions $B_\pm(z)$ are analytic in the regions Im $z > 0$ and Im $z < 0$ respectively. Note also that, by construction, the matrix-valued functions $B_\pm(\xi_n)$ commute with the diagonal matrix $(\xi_n \pm i )^\zeta$. [To see this, choose an arbitrary block $J_k$. On this block, $(\xi_n \pm i )^\zeta$ acts like a scalar, since the relevant components of the vector $\zeta$ are all equal to $- (\log \lambda_k)/(2 \pi i)$]. \\
\end{remark}

As previously, see equation \eqref{Astar2}, we define a new matrix-valued function
\begin{equation} \label{Astar3}
A^*_0(\xi^\prime, \xi_n) = (\xi_n - i)^{-\zeta} B_-(\xi_n) \widetilde{A}_0(\xi^\prime, \xi_n) B^{-1}_+(\xi_n)(\xi_n + i)^{\zeta}.
\end{equation}
Now, since $E_+ = I$, using the established properties of $B^m(\alpha_\pm)$ we have 
\begin{align*}
 \lim_{\xi_n \rightarrow \infty} & B_-(\xi_n) \widetilde{A}_0(\xi^\prime, \xi_n) B^{-1}_+(\xi_n) \\
& =  \lim_{\xi_n \rightarrow \infty} B_-(\xi_n) B^{-1}_+(\xi_n) \\
& =  \lim_{\xi_n \rightarrow \infty} B_-(\xi_n) \text{ diag }\big [ B^{m_1}(-\alpha_+(\xi_n)), \dots, B^{m_l}(-\alpha_+(\xi_n)) \big ]\\
& =  \lim_{\xi_n \rightarrow \infty} \text{diag }\big [ B^{m_1}(\alpha_-(\xi_n)  - \alpha_+(\xi_n)), \dots, B^{m_l}(\alpha_-(\xi_n) - \alpha_+(\xi_n)) \big ] \\
& = \text{diag }\big [ B^{m_1}(0), \dots, B^{m_l}(0) \big ] \\
& = I.
\end{align*}

On the other hand, since $E_-= \text{diag }\big [ \lambda_1 B^{m_1}(1), \dots, \lambda_l B^{m_l}(1) \big ]$,
\begin{align*}
\lim_{\xi_n \rightarrow -\infty} & B_-(\xi_n) \widetilde{A}_0(\xi^\prime, \xi_n) B^{-1}_+(\xi_n) \\
& =  \lim_{\xi_n \rightarrow -\infty} B_-(\xi_n) \text{ diag }\big [ \lambda_1 B^{m_1}(1), \dots, \lambda_l B^{m_l}(1) \big ]B^{-1}_+(\xi_n) \\
& =  \lim_{\xi_n \rightarrow -\infty} B_-(\xi_n) \text{ diag }\big [ \lambda_1 B^{m_1}(1-\alpha_+), \dots, \lambda_l B^{m_l}(1-\alpha_+) \big ]\\
& = \lim_{\xi_n \rightarrow -\infty} \text{diag }\big [ \lambda_{1}B^{m_1}(1-\alpha_+ + \alpha_-), \dots, \lambda_{l}B^{m_l}(1-\alpha_+ + \alpha_-) \big ]\\
& = \text{diag }\big [ \lambda_{1}B^{m_1}(0), \dots, \lambda_{l}B^{m_l}(0) \big ]\\
& = \text{diag }\big [ \lambda_{1}I^{m_1}, \dots, \lambda_{l}I^{m_l} \big ],
\end{align*}
where $I^m$ is an $m \times m$ block identity matrix. \\

So, as in Lemma \ref{Case2}, we see that
\begin{equation} \label{A*lim}
\lim_{\xi_n \rightarrow \pm \infty} A_0^*(\xi^\prime, \xi_n) = \lim_{\xi_n \rightarrow \pm \infty} (\xi_n - i)^{-\zeta} B_-(\xi_n) \widetilde{A}_0(\xi^\prime, \xi_n) B^{-1}_+(\xi_n)(\xi_n + i)^{\zeta} = I.
\end{equation}

To show that $D^k_{\xi_n} A_0^*(\xi^\prime, \xi_n), \, k = 1, \dots , (r+3)$ satisfies estimates of the form given in equation \eqref{First+Second}, we follow exactly the approach taken in Lemma \ref{Case2}. 
\begin{align*} 
A_0^*(\xi^\prime, \xi_n) = & (\xi_n - i)^{-\zeta}  B_-(\xi_n) \widetilde{A}_0(\xi^\prime, \xi_n) B^{-1}_+(\xi_n)(\xi_n + i)^{\zeta} \\
= & (\xi_n - i)^{-\zeta} B_-(\xi_n) (\widetilde{A}_0(\xi^\prime, \xi_n)- E_\pm)  B^{-1}_+ (\xi_n)(\xi_n + i)^{\zeta} \\
& + (\xi_n - i)^{-\zeta} B_-(\xi_n)  E_\pm  B^{-1}_+(\xi_n)(\xi_n + i)^{\zeta},
\end{align*}
where $B_-(\xi_n)  E_\pm  B^{-1}_+(\xi_n)$
\begin{equation*}
= E_\pm \, \text{diag} \bigg[ B^{m_j} \bigg( \log \dfrac{\xi_n - i}{\xi_n+i}\bigg) \bigg] = E_\pm \, \text{diag} \bigg[ B^{m_j} \bigg( \log \dfrac{1 - i / \xi_n }{1+ i/ \xi_n}\bigg) \bigg].
\end{equation*}

The presence of the logarithmic terms in the matrices $B_\pm$ adds only a minor complication. For any fixed positive integer $m$ we have
\begin{equation*}
D_{\xi_n} [\log (\xi_n \pm i)]^m = m [\log (\xi_n \pm i)]^{m-1} (\xi_n \pm i)^{-1}.
\end{equation*}
But since 
\begin{equation*}
\lim_{\xi_n \rightarrow \pm \infty} \dfrac{ [\log (\xi_n \pm i)]^p}{(\xi_n \pm i)^\epsilon} = 0,
\end{equation*}
for any fixed integer $p$ and any $\epsilon > 0$, we can effectively repeat the proof of Lemma \ref{Case2} with any $\delta^\prime$ satisfying $0 < \delta^\prime < \delta_0$. \\

From Lemma \ref{Wrtest}, we have $A^*_0(\xi^\prime, \xi_n) \in W^{r+2}_{N \times N}(\mathbb {R})$.  This completes the proof of the general case. \\
\end{proof}

\newpage
\section{Function approximation in $W^r(\mathbb{R})$} \label{AppWrApprox}
The goal in this appendix is prove that the Fourier transforms of smooth functions which have compact support and are zero in a neighbourhood of $x=0$, are dense in the space $W^r(\mathbb{R})$. To show this, we use the standard approach of cut-off functions and convolution with a mollifier. In simple terms, this analysis is required because we are effectively working in a \textit{weighted} Sobolev space. (See, for example, \cite{Ad}).\\

\begin{lemma}
Suppose $f \in W^r(\mathbb{R})$ and $0 \leq j \leq k \leq r$. Then 
\begin{equation*}
D^k f, \,\, t^j D^k f \text{  and  } D^k(t^j f) \in W(\mathbb{R}).
\end{equation*}
In addition, if $f = \widehat{g}$ then $\| x^k g \|_{L_1} = \| D^k f \|_W$, 
\begin{equation*}
\| D^j  (x^k g)  \|_{L_1} = \| t^j D^k f \|_W \quad \text{and} \quad \| x^k D^j g \|_{L_1} = \| D^k (t^j f) \|_W.
\end{equation*}
\end{lemma}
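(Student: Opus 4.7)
The plan is to extract everything from the defining hypothesis $(1-it)^k D^k f \in W(\mathbb{R})$, first by multiplication inside the Banach algebra $W(\mathbb{R})$ to obtain the three membership claims, and then by Fourier duality to produce the norm identities.

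\textbf{Step 1.} Since $1-it = -i(t+i)$, each rational function $t^j/(1-it)^k$ with $0\le j\le k$ has its only pole at $t=-i\notin\dot{\mathbb{R}}$, and numerator degree at most denominator degree. Thus it is a bounded rational function with no real poles and, by Lemma~\ref{w0ralg}, lies in the $R$-algebra $W(\mathbb{R})$.

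\textbf{Step 2.} Multiplying the hypothesis inside the Banach algebra $W(\mathbb{R})$ by $(1-it)^{-k}$ and by $t^j/(1-it)^k$ respectively yields
\[
D^k f \;=\; \frac{1}{(1-it)^k}\cdot(1-it)^k D^k f \in W(\mathbb{R}), \qquad
t^j D^k f \;=\; \frac{t^j}{(1-it)^k}\cdot(1-it)^k D^k f \in W(\mathbb{R}),
\]
for $0\le j\le k\le r$. The Leibniz expansion
\[
D^k(t^j f) \;=\; \sum_{p=0}^{\min(j,k)}\binom{k}{p}\frac{j!}{(j-p)!}\,t^{j-p}\,D^{k-p}f
\]
then places $D^k(t^j f)$ in $W(\mathbb{R})$, since each summand has $0\le j-p\le k-p\le r$.

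\textbf{Step 3.} For the norm identities, write $f=\widehat g$. Iterating the tempered-distribution identities $D_t\widehat g = \widehat{ixg}$ and $t\widehat g = i\widehat{D_x g}$ gives
\[
D^k f \;=\; i^k\widehat{x^k g},\qquad
t^j D^k f \;=\; i^{j+k}\widehat{D^j(x^k g)},\qquad
D^k(t^j f) \;=\; i^{j+k}\widehat{x^k D^j g}
\]
as equalities in $\mathcal{S}'(\mathbb{R})$. Since the left-hand sides lie in $W(\mathbb{R})=\widehat{L_1(\mathbb{R})}\oplus\mathbb{C}$ by Step~2, and the Fourier transform is injective on tempered distributions, each right-hand preimage equals an $L_1$ function plus a possible Dirac mass at the origin. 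Once the Dirac contributions are ruled out one has $x^k g$, $D^j(x^k g)$, and $x^k D^j g$ all in $L_1(\mathbb{R})$, and the three norm identities follow from the defining isometry $\|\widehat h\|_W = \|h\|_{L_1}$.

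\textbf{Main obstacle.} The delicate step is showing that the additive constants in the $W$-decomposition of $D^k f$, $t^j D^k f$, and $D^k(t^j f)$ all vanish, equivalently that no Dirac singularities appear in the distributional preimages. For $D^k f$ this is handled by induction on $k$: $f=\widehat g$ vanishes at infinity by Riemann--Lebesgue, and a $W$-function whose $k$-th derivative (also in $W$) tended to a nonzero constant at infinity would force the preceding antiderivative to grow polynomially, contradicting its boundedness. The cases $t^j D^k f$ and $D^k(t^j f)$ are cleanest to handle through the tempered-distribution identity $(1-it)^k D^k f = i^k\widehat{(I+D)^k(x^k g)}$ obtained from $(1-it)\widehat u = \widehat{(I+D)u}$: vanishing of the constant part of $(1-it)^k D^k f$ forces $(I+D)^k(x^k g)\in L_1$, and unrolling the binomial expansion $(I+D)^k=\sum_p\binom{k}{p}D^p$ then yields $D^p(x^k g)\in L_1$ for $0\le p\le k$, from which the remaining integrabilities follow by the same bookkeeping applied to $x^k D^j g$.
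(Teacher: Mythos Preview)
Your Steps 1 and 2 match the paper exactly: multiply $(1-it)^kD^kf\in W(\mathbb{R})$ by the rational function $t^j/(1-it)^k\in W(\mathbb{R})$, then apply Leibniz. For the norm identities the paper simply quotes the Fourier rules $D^k\widehat g=\widehat{(ix)^kg}$ and $t^j\widehat h=\widehat{(iD)^jh}$ and writes down the equalities; your Step~3 and your ``Main obstacle'' paragraph actually go further than the paper by recognising that one must check the constant parts in the $W$-decompositions vanish (equivalently, that no Dirac masses appear on the $x$-side).

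There is, however, a gap in how you resolve that obstacle. Your antiderivative-growth induction correctly shows the constant part of $D^kf$ vanishes, but you then \emph{assert} without argument that the constant part of $(1-it)^kD^kf$ vanishes, and this is a different statement: $D^kf\to 0$ says nothing about $(-it)^kD^kf$ at infinity. A separate downward induction is needed here: if $(1-it)^kD^kf\to c\ne 0$ then $D^kf\sim c(1-it)^{-k}$, and integrating from $t$ to $\infty$ (using $D^{k-1}f\to 0$, already established) gives $(1-it)^{k-1}D^{k-1}f\to c'\ne 0$; iterating down to $k=1$ forces $Df\sim c/(1-it)$ and hence $f\sim\log t$, contradicting $f\in W$. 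Your subsequent ``unrolling'' step is also not self-contained as written: the single relation $\sum_p\binom{k}{p}D^p(x^kg)\in L_1$ does not by itself split into $D^p(x^kg)\in L_1$ for each $p$. Once the constant of $(1-it)^kD^kf$ is known to vanish, it is cleaner to read off directly that $t^jD^kf\to 0$ for all $0\le j\le k$ (for $j<k$ from $|D^kf|\le C|t|^{-k}$; for $j=k$ since $t^kD^kf$ and $(-i)^{-k}(1-it)^kD^kf$ differ by lower-order terms that tend to $0$), which immediately gives $D^j(x^kg)\in L_1$ with no unrolling required.
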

\begin{proof}
Since $W(\mathbb{R})$ is an R-algebra and $0 \leq j \leq k$,
\begin{equation} \label{rationallk}
\frac{t^j}{(1 - it)^k} \in W(\mathbb{R}).
\end{equation}

By definition, $(1 - it)^k D^k f \in W(\mathbb{R})$ and it follows from \eqref{rationallk} that
\begin{equation} \label{tlDkf}
t^j D^k f  = \frac{t^j}{(1 - it)^k}\, (1 - it)^k D^k f \in W(\mathbb{R}).
\end{equation}

Moreover, from \eqref{tlDkf}
\begin{equation*} 
D^k t^j f = \sum_{l = 0}^j c_l t^{j - l} D^{k - l} f \in W(\mathbb{R}),
\end{equation*}
where $c_l$ are some constants. (Note that $j \le k$ implies that $j-l \leq k-l$.) \\

From Proposition 2.2.11, p.\,100 \cite{Gr} 
\begin{equation*}
\mathcal{F}_{x \to t} \big [ (ix)^k g(x) \big ] = D^k \widehat{g}.
\end{equation*}
Hence, $\| x^k g \|_{L_1} = \| D^k \widehat{g} \|_W = \| D^k f \|_W$. \\

Let $h_k(x) := x^k g(x)$. Then, again from Proposition 2.2.11, p.\,100 \cite{Gr} 
\begin{equation*}
\mathcal{F}_{x \to t}  D^j h_k = (-it)^j \widehat{h_k},
\end{equation*}
and thus $\| D^j (x^k g) \|_{L_1} = \| t^j \widehat{x^k g} \|_{W}  = \| t^j D^k f \|_W$. \\

Finally, $\| x^k D^j g \|_{L_1} = \| \widehat{x^k D^j g} \|_W = \| D^k \widehat{D^j g} \|_W = \| D^k (t^j f )\|_W$.  \\

This completes the proof of the lemma. \\
\end{proof}

Suppose $\widehat{g}(t) \in W^{r}(\mathbb{R})$. Then, by definition,
\begin{equation*}
\| \widehat{g} \|_{W^r} = \| g \|_{L_1} + \sum^r_{k=1} \| (D_x +1)^k (x^k g(x)) \|_{L_1}.
\end{equation*}
Hence
\begin{equation*}
\| \widehat{g} \|_{W^r} \leq \| g \|_{L_1} + \sum^r_{k=1} \bigg \{ \|x^k g \|_{L_1}+ \sum^k_{j=1} \binom{k}{j} \| D^j_x (x^k g) \|_{L_1} \bigg \}.
\end{equation*}

Thus, to show convergence to $\widehat{g}$ in $\| \cdot \|_{W^r}$, it is sufficient to show convergence to $g, x^kg$ and $D^j_x(x^kg)$ in $\| \cdot \|_{L_1}$ for all $1 \leq j \leq k \leq r$. \\

Note that, for $j \geq 1$
\begin{equation*}
D^j_x (x^k g) = \sum^j_{l=0} \binom{j}{l} (D^l_x x^k) (D^{j-l}_x g).
\end{equation*}
Hence, we have
\begin{equation} \label{gstarnorm}
\| \widehat{g} \|_{W^r} \leq C_r \sum_{0 \leq j \leq k \leq r} \| x^k D^j_x  g \|_{L_1} := C_r \| g \|_*,
\end{equation}
where $C_r$ is a constant that \textit{only} depends on $r$. So, an alternative sufficient condition for the convergence to $\widehat{g}$ in $\| \cdot \|_{W^r}$ is the convergence to $x^k D^j_x g$ in $\| \cdot \|_{L_1}$ for all $0 \leq j \leq k \leq r$. \\

We now define a cut-off function that is zero in a neighbourhood of $x=0$, and is also equal to zero when $|x|$ is sufficiently large. Firstly, we define two smooth functions
\[
 \alpha(x) =
  \begin{cases}
   0 & \text{if } |x| \leq 1/2 \\
   1 & \text{if } |x| \geq 1,
  \end{cases}
\]
and 
\[
 \beta(x) =
  \begin{cases}
   1 & \text{if } |x| \leq 1\\
   0 & \text{if } |x| \geq 2.
  \end{cases}
\]
Then, for $0<  \epsilon < 1$, we define the smooth cut-off function $\phi_\epsilon$ by
\begin{equation*}
\phi_\epsilon(x) = \alpha(x/\epsilon) \beta (\epsilon x).
\end{equation*} 
Notice that, by construction, 
\[
\phi_\epsilon(x) = 
\begin{cases} 
1 & \mbox{if } |x| \in [\epsilon, \tfrac{1}{\epsilon}]  \\ 
0 & \mbox{if } |x| \in [0, \tfrac{\epsilon}{2})  \cup [\tfrac{2}{\epsilon}, \infty). 
\end{cases} 
\]
In particular, for each $0 < \epsilon <1$, the function $\phi_\epsilon$ has compact support, and is identically zero in a neighbourhood of $0$. \\

Therefore, for $j=1,2, \dots$, the support of  $D^j_x \phi_\epsilon(x)$ is contained in $E_\epsilon$, where
\begin{equation*}
E_\epsilon := [-\tfrac{2}{\epsilon}, -\tfrac{1}{\epsilon}] \cup [- \epsilon, -\tfrac{\epsilon}{2}] \cup[\tfrac{\epsilon}{2}, \epsilon] \cup [\tfrac{1}{\epsilon}, \tfrac{2}{\epsilon}].
\end{equation*} 

For any positive integer $k$, we have $D^k_x \alpha(x/\epsilon) = (1/\epsilon)^k  \alpha^{(k)}(x/\epsilon)$ and $D^k_x \beta(\epsilon x) = \epsilon^k \beta^{(k)}(\epsilon x)$. Hence, for $l=1,2, \dots$,
\begin{equation*} 
x^l D^l_x \phi_\epsilon  =  \sum^l_{k=0} c_k \bigg[  \bigg ( \dfrac{x}{\epsilon} \bigg)^k \alpha^{(k)} \bigg ( \dfrac{x}{\epsilon} \bigg) \bigg ] \bigg[  (\epsilon x)^{l-k} \beta^{(l-k)}(\epsilon x )\bigg] ,
\end{equation*}
for certain constants $c_k$ that only depend on $k$. Moreover, $\alpha^{(k)}(y) = 0$ unless $\tfrac{1}{2} \leq |y| \leq 1$ and $\beta^{(k)}(y) = 0$ unless $1 \leq |y| \leq 2$.\\

Hence, for $l=1,2, \dots$,
\begin{equation} \label{Cabl}
\sup_{x \in \mathbb{R}, \, 0< \epsilon<1} |x^l D^l_x \phi_\epsilon | \leq C_{\alpha, \beta, l},
\end{equation}
where $C_{\alpha, \beta, l}$ is a (finite) constant that depends \textit{only} on the smooth functions $ \alpha, \beta$ and the index $l$.\\

\begin{lemma} \label{L1gapprox}
Suppose $\widehat{g} \in W^{r}(\mathbb{R})$. Then $\| \widehat{\phi_\epsilon g} - \widehat{g} \|_{W^r} \to 0$ as $\epsilon \searrow 0$. \end{lemma}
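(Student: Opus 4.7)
The plan is to reduce the problem to an $L_1$-approximation statement via the estimate \eqref{gstarnorm}, and then verify the $L_1$-convergence term-by-term using the Leibniz rule together with the support and size properties of $\phi_\epsilon$ recorded just above.

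First, by \eqref{gstarnorm} applied to $\phi_\epsilon g - g$, we have
\begin{equation*}
\| \widehat{\phi_\epsilon g} - \widehat{g} \|_{W^r} \leq C_r \sum_{0 \leq j \leq k \leq r} \| x^k D^j_x (\phi_\epsilon g - g) \|_{L_1},
\end{equation*}
so it suffices to prove that each summand on the right tends to $0$ as $\epsilon \searrow 0$. Note that by the lemma preceding \eqref{gstarnorm}, the hypothesis $\widehat{g} \in W^r(\mathbb{R})$ guarantees that $x^{k-l} D^{j-l}_x g \in L_1(\mathbb{R})$ for all $0 \leq l \leq j \leq k \leq r$, which is the regularity we will need.

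Second, I would apply the Leibniz rule to write
\begin{equation*}
x^k D^j_x(\phi_\epsilon g - g) = (\phi_\epsilon - 1)\, x^k D^j_x g + \sum_{l=1}^{j} \binom{j}{l} \bigl( x^l D^l_x \phi_\epsilon \bigr) \bigl( x^{k-l} D^{j-l}_x g \bigr).
\end{equation*}
For the ``main'' term $(\phi_\epsilon - 1)\, x^k D^j_x g$, observe that $\phi_\epsilon \to 1$ pointwise a.e. on $\mathbb{R}$ as $\epsilon \searrow 0$, while $|\phi_\epsilon - 1| \leq C$ uniformly, so $|(\phi_\epsilon - 1)\, x^k D^j_x g| \leq C\, |x^k D^j_x g| \in L_1$. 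The dominated convergence theorem then gives $L_1$-convergence to $0$.

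Third, for the remaining terms with $l \geq 1$, I would use the uniform bound \eqref{Cabl}, which says $|x^l D^l_x \phi_\epsilon| \leq C_{\alpha,\beta,l}$ independently of $\epsilon$, together with the fact that $D^l_x \phi_\epsilon$ is supported in
\begin{equation*}
E_\epsilon = [-\tfrac{2}{\epsilon}, -\tfrac{1}{\epsilon}] \cup [-\epsilon, -\tfrac{\epsilon}{2}] \cup [\tfrac{\epsilon}{2}, \epsilon] \cup [\tfrac{1}{\epsilon}, \tfrac{2}{\epsilon}].
\end{equation*}
This yields the estimate
\begin{equation*}
\bigl\| \bigl( x^l D^l_x \phi_\epsilon \bigr)\bigl( x^{k-l} D^{j-l}_x g \bigr) \bigr\|_{L_1} \leq C_{\alpha,\beta,l} \int_{E_\epsilon} \bigl| x^{k-l} D^{j-l}_x g \bigr|\, dx.
\end{equation*}
Since $x^{k-l} D^{j-l}_x g \in L_1(\mathbb{R})$, the integral tends to $0$ as $\epsilon \searrow 0$: the contribution from the inner components $[\pm\epsilon/2, \pm\epsilon]$ vanishes by absolute continuity of the Lebesgue integral, and the contribution from the outer components $[\pm 1/\epsilon, \pm 2/\epsilon]$ vanishes because they retreat to infinity and the $L_1$-tail of an integrable function is arbitrarily small.

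The only real subtlety is the presence of weights $x^l$ against the derivatives of the cut-off, but this is exactly why \eqref{Cabl} was engineered: the scaling of $\phi_\epsilon(x) = \alpha(x/\epsilon)\beta(\epsilon x)$ ensures that each derivative costs a factor $1/\epsilon$ at the origin end (compensated by $x \sim \epsilon$ there) and a factor $\epsilon$ at the infinity end (compensated by $x \sim 1/\epsilon$ there), producing a scale-invariant bound. With the bound and support analysis in hand, the proof is a direct assembly of the three observations above. No step should present serious difficulty; the bookkeeping with the Leibniz rule and keeping track of the two distinct mechanisms (dominated convergence on all of $\mathbb{R}$ versus vanishing integrals over $E_\epsilon$) is the only place where care is required.
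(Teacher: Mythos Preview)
Your proposal is correct and follows essentially the same approach as the paper: reduce to $L_1$-convergence of $x^k D^j_x(\phi_\epsilon g - g)$ via \eqref{gstarnorm}, apply the Leibniz rule, handle the $l=0$ term by dominated convergence, and control the $l\ge 1$ terms using the support set $E_\epsilon$ together with the scale-invariant bound \eqref{Cabl}. The only cosmetic difference is that the paper treats the cases $j=0$ and $j\ge 1$ separately before combining, whereas you handle all $0\le j\le k\le r$ uniformly.
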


\begin{proof}
Suppose $\widehat{g} \in W(\mathbb{R})$. Then, by definition, $ g(x) \in L_1(\mathbb{R})$ and $\| \widehat{g} \|_W = \| g \|_{L_1}$. It is immediately clear, from the definition of the $L_1$ norm, that  $ \phi_\epsilon g \in L_1(\mathbb{R})$ and $\| \phi_\epsilon g - g \|_{L_1} \to 0$ as $\epsilon \searrow 0$. That is, $\| \widehat{\phi_\epsilon g} - \widehat{g} \|_W \to 0$ as $\epsilon \searrow 0$, as required. \\

Now suppose $\widehat{g} \in W^{r}(\mathbb{R})$, and that $1 \leq k \leq r$. Then 
\begin{equation*}
\| x^k ( \phi_\epsilon g) - x^k g \|_{L_1} = \| \phi_\epsilon (x^k g) - x^k g \|_{L_1}  \to 0, \quad \text{ as } \epsilon \searrow 0. 
\end{equation*} 

Further suppose that $\widehat{g} \in W^{r}(\mathbb{R})$, and $1 \leq j \leq k \leq r$. Then
\begin{equation*}
D^j_x( \phi_\epsilon g) =  \phi_\epsilon (D^j_x g) + \sum^j_{l=1} \binom{j}{l} D^l_x \phi_\epsilon \cdot D^{j-l}_x g.
\end{equation*}
We now show that, for $1 \leq l \leq j \leq k \leq r$, 
\begin{align*}
\| x^k D^l_x \phi_\epsilon \cdot D^{j-l}_x g \|_{L_1} &= \int_{\mathbb{R}} | x^k D^l_x \phi_\epsilon \cdot D^{j-l}_x g | \, dx \\
& = \int_{E_\epsilon} | x^k D^l_x \phi_\epsilon \cdot D^{j-l}_x g | \, dx \quad \text{ since } D^l_x \phi_\epsilon = 0 \text{ outside } E_\epsilon \\
& \leq C_{\alpha, \beta, l} \int_{E_\epsilon} | x^{k-l} D^{j-l}_x g | \, dx \quad \text{ from } \eqref{Cabl}\\
& \to 0 \quad \text{ as } \epsilon \searrow 0,
\end{align*}
since $x^{k-l} D^{j-l}_x g \in L_1(\mathbb{R})$. Hence, $\| x^k D^j_x ( \phi_\epsilon g) - x^k D^j_x g \|_{L_1}$
\begin{align*}
 & \leq \| x^k \phi_\epsilon (D^j_x g) - x^k D^j_x g \|_{L_1}  + \sum^j_{l=1} \binom{j}{l} \| x^k D^l_x \phi_\epsilon \cdot D^{j-l}_x g \|_{L_1}\\
& = \|  \phi_\epsilon (x^k D^j_x g) - x^k D^j_x g \|_{L_1} + \sum^j_{l=1} \binom{j}{l} \| x^k D^l_x \phi_\epsilon \cdot D^{j-l}_x g \|_{L_1}\\ 
& \to 0, \text{ as } \epsilon \searrow 0.
\end{align*}
That is, $\| \widehat{\phi_\epsilon g} - \widehat{g} \|_{W^r} \to 0$ as $\epsilon \searrow 0$, as required. 
\end{proof}

\begin{remark}
The significance of Lemma \ref{L1gapprox} is that we can effectively assume, for the ensuing density arguments, that any function in $L_1(\mathbb{R})$ has both compact support and is also identically zero in a neighbourhood of the origin. (To see this, simply approximate $g \in L_1(\mathbb{R})$ by $h= \phi_\epsilon g$.) \\
\end{remark}

Following, for example \cite{Ad}, we now introduce the concept of a \textit{mollifier}. Let $J$ be a nonnegative, real-valued function in $C^\infty_0(\mathbb{R})$ satisfying the two conditions,  $J(x) = 0$ \text{ if } $|x| \geq 1$, and $\int_{\mathbb{R}} J(x) \, dx = 1$. \\

For $\delta > 0$, we define $J_\delta(x)  = \delta^{-1} J(x/ \delta)$. Then $J_\delta(x) \in C^\infty_0(\mathbb{R})$ and:
\begin{enumerate} [(a)]
{\setlength\itemindent{50pt}\item $J_\delta (x) = 0$ \text{ if } $|x| \geq \delta \text{ and }$} 
{\setlength\itemindent{50pt}\item $\int_{\mathbb{R}} J_\delta (x) \, dx = 1$.} 
\end{enumerate} 
As $\delta \searrow 0$, the mollifier $J_\delta(x)$ approaches the delta-function supported on $x=0$. Formally, we define the convolution
\begin{equation*}
(J_\delta * u)(x) = \int_{\mathbb{R}} J_\delta(x-y) u(y) \, dy.
\end{equation*}
Suppose $v \in L_1(\mathbb{R})$ and has compact support. Then:
\begin{enumerate} [(i)]
{\setlength\itemindent{50pt}\item $(J_\delta *v) \in C^\infty_0(\mathbb{R})$}
{\setlength\itemindent{50pt}\item $(J_\delta *v) \in L_1(\mathbb{R})$ and $\|J_\delta *v \|_{L_1} \leq \|v \|_{L_1}$} 
{\setlength\itemindent{50pt}\item \text{If } $Dv \in L_1(\mathbb{R})$, \text{ then } $D(J_\delta *v) = J_\delta *Dv$}
{\setlength\itemindent{50pt}\item $\lim_{\delta \searrow 0} \| J_\delta *v - v\|_{L_1} = 0$.}
\end{enumerate} 
As a simple consequence of the above, we observe that if $D^j v \in L_1(\mathbb{R})$ then
\begin{align*}
\| D^j (J_\delta *v) - D^j v\|_{L_1} & = \| (J_\delta *D^j v) - D^j v\|_{L_1} \\
& \to 0, \text{ as } \delta \searrow 0, 
\end{align*}
and, thus, $\| J_\delta *v -  v \|_{W^{r,1}} \to 0$ as $\delta \searrow 0$ in the (unweighted) Sobolev space $W^{r,1} (\mathbb{R})$.


\begin{lemma} \label{L1happrox}
Suppose $\widehat{h}(t) \in W^{r}(\mathbb{R})$, and further that $h(x)$ has compact support and is identically zero in a neighbourhood of $x=0$. Then $\| \widehat{J_\delta * h} -  \widehat{h} \|_{W^r} \to 0$ as $\delta \searrow 0$. 
\end{lemma}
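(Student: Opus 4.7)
The plan is to transfer the convergence question from the Fourier side to the spatial side using the bound \eqref{gstarnorm},
\begin{equation*}
\| \widehat{u} \|_{W^r} \leq C_r \sum_{0 \leq j \leq k \leq r} \| x^k D^j u \|_{L_1},
\end{equation*}
and then to verify that $J_\delta * h \to h$ in each of the weighted seminorms appearing on the right. Applying this bound with $u = J_\delta * h - h$ reduces the task to showing $\| x^k D^j(J_\delta * h) - x^k D^j h \|_{L_1} \to 0$ as $\delta \searrow 0$ for each pair $0 \leq j \leq k \leq r$. A preliminary observation, from the lemma preceding the mollifier discussion, is that $\widehat{h} \in W^r$ forces $x^k D^j h \in L_1$ for $0 \leq j \leq k \leq r$; combined with the hypothesis that $h$ is compactly supported away from the origin, this will let me conclude that $D^j h \in L_1$ for $0 \leq j \leq r$ (bound $|D^j h|$ by $a^{-j}|x^j D^j h|$ on the support, where $|x| \geq a > 0$), and hence that $x^m D^j h \in L_1$ for all integers $m \geq 0$ and all $0 \leq j \leq r$.

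Next, I would commute the derivative past the convolution using property (iii), obtaining $D^j(J_\delta * h) = J_\delta * D^j h$. The main obstacle is that multiplication by $x^k$ does \emph{not} commute with convolution by $J_\delta$, so the target quantity cannot be reduced directly to a classical mollifier estimate of the form (iv). To get around this, I would apply the binomial identity $x^k = ((x-y) + y)^k$ inside the convolution integral, producing the decomposition
\begin{equation*}
x^k (J_\delta * D^j h)(x) = \big( J_\delta * (x^k D^j h) \big)(x) + \sum_{l=1}^{k} \binom{k}{l} \big( [(\cdot)^l J_\delta] * [(\cdot)^{k-l} D^j h] \big)(x),
\end{equation*}
where $(\cdot)^l J_\delta$ denotes the function $u \mapsto u^l J_\delta(u)$.

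The leading term on the right converges to $x^k D^j h$ in $L_1$ by property (iv), since $x^k D^j h \in L_1$. For each of the remaining $k$ error terms, a change of variables gives $\| (\cdot)^l J_\delta \|_{L_1} = \delta^l \int_{\mathbb{R}} |u|^l J(u)\, du = O(\delta^l)$, while $\| x^{k-l} D^j h \|_{L_1}$ is finite by the preliminary observation, so Young's convolution inequality bounds the $l$-th error term by $O(\delta^l)$ and it vanishes as $\delta \searrow 0$. Summing the finitely many contributions over $0 \leq j \leq k \leq r$ and invoking \eqref{gstarnorm} yields the desired conclusion.
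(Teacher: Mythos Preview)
Your argument is correct. It differs from the paper's route, so a brief comparison is in order.

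The paper does not decompose $x^k(J_\delta*D^jh)$ via the binomial identity. Instead it exploits the support hypothesis more directly: since $\operatorname{supp} h \subset \{\epsilon \le |x| \le R\}$, for $\delta \le \min\{\epsilon/2,1\}$ the difference $J_\delta*h - h$ is supported in the fixed annulus $\{\epsilon/2 \le |x| \le R+1\}$. On functions $H$ supported there one has the two-sided bound
\[
(\epsilon/2)^r \|H\|_{W^{r,1}} \;\le\; \|H\|_{*} \;\le\; (r+1)(R+1)^r \|H\|_{W^{r,1}},
\]
so the weighted norm $\|\cdot\|_*$ and the unweighted Sobolev norm $\|\cdot\|_{W^{r,1}}$ are equivalent, and the standard mollifier convergence $\|J_\delta*h - h\|_{W^{r,1}} \to 0$ (from properties (iii) and (iv)) finishes the proof via \eqref{gstarnorm}.

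Your approach trades this norm-equivalence observation for an explicit commutator computation: the terms $[(\cdot)^l J_\delta]*[(\cdot)^{k-l}D^jh]$ quantify exactly how far multiplication by $x^k$ fails to commute with $J_\delta*$, and the $O(\delta^l)$ bound on $\|(\cdot)^l J_\delta\|_{L_1}$ kills them. This is slightly longer but gives an explicit rate on the error and makes transparent that the only role of the support hypotheses is to guarantee $x^m D^j h \in L_1$ for the needed range of $m$ and $j$. The paper's version is shorter and more conceptual; yours is more hands-on and would adapt more readily to other weights.
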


\begin{proof}
Suppose that $h(x)$ has compact support and is identically zero in a neighbourhood of $x=0$. Then, there exist positive real numbers $\epsilon$ and $R$ such that supp $h \subseteq \{ x \in \mathbb{R} : \epsilon \leq |x| \leq R \} $. Suppose $\delta \leq \min  \{ \epsilon/2, 1 \}$. Then supp $J_\delta * h \subseteq \{ x \in \mathbb{R} : \epsilon/2 \leq |x| \leq (R+1) \} $. \\

Now let $H(x)$ be any function with supp $H \subseteq \{ x \in \mathbb{R} : \epsilon/2 \leq |x| \leq (R+1) \}$, and $\widehat{H}(t) \in W^{r}(\mathbb{R})$.  Then, for any integers $j,k$ such that $0 \leq j \leq k\leq r$, we have
\begin{equation*}
(\epsilon/2)^k \| D^j H \|_{L_1} \leq \| x^k D^j H \|_{L_1} \leq (R+1)^k \| D^j H \|_{L_1}.
\end{equation*}
Therefore
\begin{equation*}
(\epsilon/2)^r \| D^j H \|_{L_1} \leq \| x^k D^j H \|_{L_1} \leq (R+1)^r \| D^jH \|_{L_1},
\end{equation*}
and summing over all $0 \leq j \leq k\leq r$ we have
\begin{equation*}
(\epsilon/2)^r \| H \|_{W^{r,1}} \leq \| H \|_* \leq (r+1)(R+1)^r \| H \|_{W^{r,1}}, \\
\end{equation*}
where $\| \cdot \|_{W^{r,1}}$ denotes the (unweighted) Sobolev norm, and $\| \cdot \|_*$ is defined by equation \eqref{gstarnorm}. \\

Thus, since $\| J_\delta *h -  h \|_{W^{r,1}} \to 0$ as $\delta \searrow 0$, we have $\| J_\delta *h -  h \|_*$ as $\delta \searrow 0$. Finally, from equation \eqref{gstarnorm}, $\| \widehat{J_\delta * h} -  \widehat{h} \|_{W^r} \to 0$ as $\delta \searrow 0$, as required. \\
\end{proof}

\newpage
\section{Matrix factor estimates from Duduchava} \label{AppMatrixFactors}
In this appendix, we follow the approach taken by Duduchava \cite{Du}, and derive some asymptotic estimates for certain matrices arising during factorization. \\

Given $A^*_0(\xi', \xi_n) \in W^{r+2}_{N \times N}( \mathbb{R})$, we have the factorization
\begin{equation} \label{Astar0Factorization}
A^*_0(\omega, t) = (A^*_-(\omega,t))^{-1} \text{diag } \bigg ( \dfrac{t- i}{t+i}\bigg)^{\kappa(\omega)} A^*_+(\omega,t),
\end{equation}
where $A^*_\pm \in W^{r+2}_{N \times N}( \mathbb{R})$, and have analytic extensions, with respect to $\xi_n$, to the upper half-plane and lower half-plane respectively. Moreover, see p.\,37 \, \cite{GetAl}, since $\lim_{t \to \pm \infty} A^*_0(\omega,t) =I$, there exist factors $A^*_\pm \in W^{r+2}_{N \times N}( \mathbb{R})$ such that
\begin{equation} \label{limAstarpm}
\lim_{t \to \pm \infty} A^*_\pm (\omega,t) =I. \\
\end{equation}

We now define
\begin{equation} \label{ApmD1definition}
A^\pm_1(\omega,t) = (t \pm i )^\zeta A^*_\pm(\omega,t) (t \pm i)^{-\zeta}.
\end{equation}

\vspace{5mm}

We begin with two technical lemmas that will be useful later. Let $\mathbb{T}$ denote the unit circle in the complex plane. \\

\begin{lemma} \label{PhikHolder}
Let $0 < \nu < 1$. Suppose $\phi(t) \in W^{r+2}(\mathbb{R})$ and $\phi_k(t) := t^k D^k_t \phi(t) = O(|t|^{-\nu})$ as $|t| \to \infty$, for $k=0,1, \dots, r+2$. Define
\begin{equation*}
\Phi_k(z) := \phi_k \bigg( i\dfrac{1+z}{1-z} \bigg), \quad z \in \mathbb{T} \setminus \{ 1 \}; \quad \Phi_k(1) := \lim_{z \to 1} \Phi_k(z).
\end{equation*}
Then, for $j=0,1, \dots, r+1, \, \, \Phi_j \in H_\nu(\mathbb{T})$, where $H_\nu(\mathbb{T})$ denotes the H\"{o}lder space of order $\nu$. Moreover, $\Phi_j(1)=0$.\\
\end{lemma}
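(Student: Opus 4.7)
The plan is to pull the analysis onto the real line via the Cayley transform $z \mapsto t = w(z) := i(1+z)/(1-z)$, which maps $\mathbb{T}\setminus\{1\}$ bijectively onto $\mathbb{R}$ and satisfies $|1-z| \sim 2/|w(z)|$ as $z \to 1$. The decay hypothesis $\phi_k(t) = O(|t|^{-\nu})$ therefore gives $|\Phi_k(z)| = O(|1-z|^\nu)$ as $z \to 1$, which in particular forces $\Phi_k(1) = 0$ for every admissible $k$ and handles the second assertion immediately. Away from $z = 1$ the Cayley transform is real-analytic with bounded derivatives on compact subsets of $\mathbb{T}\setminus\{1\}$, and the assumption $\phi \in W^{r+2}(\mathbb{R})$ makes $D^k\phi$ continuous for $k=0,1,\dots,r+2$, so each $\Phi_j$ is continuous on all of $\mathbb{T}$ and $C^1$ on $\mathbb{T}\setminus\{1\}$.

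The heart of the argument is to upgrade the pointwise decay of $\Phi_j$ near $z=1$ into a genuine H\"older estimate by controlling $\Phi_j'$. Differentiating,
\begin{equation*}
\Phi_j'(z) = \phi_j'(t) \cdot \frac{2i}{(1-z)^2}, \qquad t = w(z),
\end{equation*}
and the identity $\phi_j'(t) = (j/t)\phi_j(t) + (1/t)\phi_{j+1}(t)$ lets me exploit the decay of \emph{both} $\phi_j$ and $\phi_{j+1}$. The latter is available precisely when $j \leq r+1$; this is the structural reason the conclusion is asserted one index short of the hypothesis. I would conclude $\phi_j'(t) = O(|t|^{-1-\nu})$, and then combining this with $|1-z|^{-2} \sim |t|^2/4$ near $z=1$ yields the key estimate
\begin{equation*}
|\Phi_j'(z)| \leq C|1-z|^{\nu - 1}, \qquad z \in \mathbb{T} \setminus \{1\}.
\end{equation*}

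With this in hand I would parameterize $z = e^{i\theta}$ (so that $t = -\cot(\theta/2)$), use $|1-e^{i\theta}| \asymp |\theta|$ and $|e^{i\theta_1} - e^{i\theta_2}| \asymp |\theta_1 - \theta_2|$, and integrate along the arc. For $z_1, z_2$ both close to $1$,
\begin{equation*}
|\Phi_j(z_1) - \Phi_j(z_2)| \leq C \int_{\theta_2}^{\theta_1} |\theta|^{\nu - 1}\, d\theta \leq C'|\theta_1 - \theta_2|^\nu \leq C''|z_1 - z_2|^\nu,
\end{equation*}
where the middle inequality uses the elementary fact $||b|^\nu - |a|^\nu| \leq |b-a|^\nu$ for real $a, b$ and $0 < \nu < 1$, with a short case split when $\theta_1, \theta_2$ have opposite signs (the integral then splits at $0$ and $\theta_1 - \theta_2 = |\theta_1| + |\theta_2|$). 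For $z_1, z_2$ both bounded away from $1$ the composition is Lipschitz, and when one is close to $1$ and the other far away $|z_1 - z_2|$ is bounded below, so boundedness of $\Phi_j$ makes the estimate trivial.

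The main obstacle is exactly the delicate regime where $z_1, z_2$ are close to each other while both lie near $1$: a naive argument using only the pointwise decay of $\Phi_j$ would give the weaker bound $C(|1-z_1|^\nu + |1-z_2|^\nu)$, which fails when $|z_1 - z_2|$ is much smaller than $|1-z_k|$. Integrating the controlled blow-up of $\Phi_j'$ is what bridges this gap, and it is the reason one is forced to assume the decay of $\phi_{j+1}$ in addition to $\phi_j$.
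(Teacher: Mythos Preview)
Your proposal is correct and follows essentially the same route as the paper: pull back via the Cayley transform, use the identity $\phi_j'(t)=(j/t)\phi_j(t)+(1/t)\phi_{j+1}(t)$ together with the decay of $\phi_j$ and $\phi_{j+1}$ to obtain $|\Phi_j'(z)|\le C|1-z|^{\nu-1}$, and then convert this derivative blow-up into a H\"older bound. The only cosmetic difference is in the case split: the paper organises the argument by comparing $|z_1-z_2|$ against $|z_1-1|$ and $|z_2-1|$ (using the Mean Value Theorem when $|z_1-z_2|$ is the smallest, and the pointwise bound $|\Phi_j(z)|\le C|1-z|^{\nu}$ otherwise), whereas you integrate $|\theta|^{\nu-1}$ directly and invoke $\big||b|^{\nu}-|a|^{\nu}\big|\le|b-a|^{\nu}$, which is arguably cleaner and dispenses with the same-sign restriction the paper needs for the MVT step.
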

\begin{proof}
Choose any $z \in \mathbb{T}$. Then $z=e^{i \theta}$, for some $\theta \in [-\pi, \pi)$, and it is straightforward to show that
\begin{equation*}
i\dfrac{1+z}{1-z}  = - \cot \bigg ( \dfrac{\theta}{2} \bigg ).
\end{equation*}
Hence, for $j=0,1,\dots,r+1$, we can write 
\begin{equation*}
\Phi_j(z) = \phi_j(- \cot (\tfrac{\theta}{2})):= \psi_j (\theta).
\end{equation*}
In particular, as $\theta \to 0$ so $z \to 1$ and we obtain $\Phi_j(1)=0$.  Now

\begin{equation*}
\dfrac{d\psi_j}{d\theta} = \dfrac{d\phi_j}{d\tau} \, \dfrac{d\tau}{d\theta}, \quad \text{ where } \tau := - \cot (\tfrac{\theta}{2}).
\end{equation*}
By hypothesis,
\begin{equation*}
\dfrac{d\phi_j}{d\tau} = O(|\tau|^{-\nu-1}) = O(|\theta|^{\nu +1}), \quad \text{ as } |\theta| \to 0.
\end{equation*}
Moreover, by direct calculation
\begin{equation*}
\dfrac{d\tau}{d\theta} = \dfrac{1}{2 \sin^2 (\tfrac{\theta}{2}))} = O( |\theta|^{-2}), \quad \text{ as } |\theta| \to 0.
\end{equation*}
Combining these results,
\begin{equation} \label{DPhiTheta}
\dfrac{d\psi_j}{d\theta} = O( |\theta|^{\nu-1}), \quad \text{ as } |\theta| \to 0. 
\end{equation} \\

Now suppose $z_1, z_2 \in \mathbb{T}$. Then, by relabelling if necessary, we can suppose
\begin{equation*}
| z_1 - 1| \leq |z_2 -1|, 
\end{equation*} 
and we consider three cases:
\begin{itemize}[leftmargin=25mm]
\item [Case 1:] $|z_1 - z_2 | < | z_1 - 1| \leq |z_2 -1|$;
\item [Case 2:] $| z_1 - 1| \leq |z_1 - z_2 |  \leq |z_2 -1|$;
\item [Case 3:] $ | z_1 - 1| \leq |z_2 -1| < |z_1 - z_2 |$. \\
\end{itemize}

We begin with Case 1, and apply the Mean Value Theorem to $\psi_j$: 
\begin{align*}
|\Phi_j(z_1) - \Phi_j(z_2)| & = |\psi_j(\theta_1) - \psi(\theta_2)| \\
& =\bigg | \dfrac{d\psi_j}{d\theta}(\theta^*) \bigg | \cdot |\theta_1 - \theta_2| \quad (|\theta_1| \leq |\theta^*| \leq |\theta_2|),
\end{align*}
where, due to constraints applicable in this case, $\theta_1$ and $\theta_2$ must have the same sign. Hence, from \eqref{DPhiTheta},
\begin{align*}
|\Phi_j(z_1) - \Phi_j(z_2)| & \leq C' | z^*-1|^{\nu-1} \cdot |z_1- z_2| \quad \text{for some constant }C' \\
& = C' \bigg ( \dfrac{|z_1-z_2|}{|z^*-1|}\bigg )^{1-\nu} \, |z_1-z_2|^\nu \\
& \leq C' \bigg ( \dfrac{|z_1-z_2|}{|z_1-1|}\bigg )^{1-\nu} \, |z_1-z_2|^\nu \\
& \leq C'  \,  |z_1-z_2|^\nu.
\end{align*}

For Case 2,
\begin{align*}
|\Phi_j(z_1) - \Phi_j(z_2)| & \leq |\Phi_j(z_1)| +  |\Phi_j(z_2)| \\
& = |\phi_j(-\cot (\tfrac{\theta_1}{2})| +  |\phi_j(-\cot \tfrac{\theta_2}{2})| \\
& \leq C | \cot (\tfrac{\theta_1}{2})|^{-\nu} +  C | \cot (\tfrac{\theta_2}{2})|^{-\nu} \\
& \leq 2 \,C |\theta_1|^\nu + 2 \, C |\theta_2|^\nu \\
& \leq 4 \, C |z_1-1|^\nu + 4 \, C |z_2-1|^\nu
\end{align*}
But, in this case, $|z_1 -1 | \leq |z_1-z_2|$ and moreover,
\begin{equation*}
|z_2-1| \leq |z_2-z_1|+|z_1-1| \leq 2 |z_1-z_2|.
\end{equation*}
Therefore,
\begin{equation*}
|\Phi_j(z_1) - \Phi_j(z_2)| \leq 12 \, C |z_1-z_2|^\nu. 
\end{equation*} \\

Finally, turning to Case 3,
\begin{align*}
|\Phi_j(z_1) - \Phi_j(z_2)| & \leq |\Phi_j(z_1)| +  |\Phi_j(z_2)| \\
& = |\phi_j(-\cot (\tfrac{\theta_1}{2})| +  |\phi_j(-\cot \tfrac{\theta_2}{2})| \\
& \leq C | \cot (\tfrac{\theta_1}{2})|^{-\nu} +  C | \cot (\tfrac{\theta_2}{2})|^{-\nu} \\
& \leq 2 \, C |\theta_1|^\nu + 2 \, C |\theta_2|^\nu \\
& \leq 4 \, C |z_1-1|^\nu + 4 \, C |z_2-1|^\nu \\
& \leq 8 \, C |z_1-z_2|^\nu. 
\end{align*}
\end{proof}

\begin{lemma} \label{tkDkSR}
Let $0 < \nu < 1$ and $\phi(t) \in W^{r+2}(\mathbb{R})$. Suppose $\phi_k(t) := t^k D^k_t \phi(t) = O(|t|^{-\nu})$ as $|t| \to \infty$, for $k=0,1, \dots, r+2$. Then $ t^k D^k_t S_{\mathbb{R}} \phi(t) = O(|t|^{-\nu})$ as $|t| \to \infty$, for $k=0,1,\dots, r+1$.
\end{lemma}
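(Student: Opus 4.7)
The plan is to reduce the assertion to its counterpart on the unit circle via the M\"obius change of variable $t = i(1+z)/(1-z)$, where the behaviour at $|t| = \infty$ becomes behaviour at $z = 1$, and then to invoke the classical Plemelj--Privalov theorem: if $\Psi \in H_\nu(\mathbb{T})$, then $S_{\mathbb{T}}\Psi \in H_\nu(\mathbb{T})$. Lemma \ref{PhikHolder} is tailored precisely for this: the transports $\Phi_k$ of $\phi_k$ lie in $H_\nu(\mathbb{T})$ and vanish at $z = 1$ for $k = 0, 1, \dots, r+1$, which is exactly the input the theorem requires.

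My first step would be to establish the identity $t^k D_t^k S_{\mathbb{R}}\phi(t) = S_{\mathbb{R}}(\phi_k)(t)$. Lemma \ref{Commute}, applied repeatedly, gives $D_t^k S_{\mathbb{R}}\phi = S_{\mathbb{R}}(D^k\phi)$. The elementary factorisation $\tfrac{t^k}{\tau-t} = \tfrac{\tau^k}{\tau-t} - \sum_{j=0}^{k-1} t^j\tau^{k-1-j}$, substituted under the principal-value integral, then yields
\[
t^k S_{\mathbb{R}}(D^k\phi)(t) = S_{\mathbb{R}}(\phi_k)(t) - \sum_{j=0}^{k-1} \frac{t^j}{\pi i} \int_{-\infty}^\infty \tau^{k-1-j} D^k\phi(\tau)\,d\tau.
\]
Because $\phi_m(t) = O(|t|^{-\nu})$ forces $D^m\phi(t) = O(|t|^{-m-\nu})$ for $m \le r+2$, iterated integration by parts (with the boundary terms $\tau^{m}D^{k-1}\phi(\tau) = O(|\tau|^{-j-\nu})$ vanishing at $\pm\infty$ at every stage) collapses every correction integral to zero.

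Next, for $0 \le k \le r+1$, I would transport $S_{\mathbb{R}}(\phi_k)$ to $\mathbb{T}$. Setting $\tau = i(1+\zeta)/(1-\zeta)$ and $z = (t-i)/(t+i)$ and computing $\tau - t = 2i(\zeta-z)/[(1-\zeta)(1-z)]$, $d\tau = 2i\,d\zeta/(1-\zeta)^2$, together with the partial fraction $\tfrac{1-z}{(1-\zeta)(\zeta-z)} = \tfrac{1}{\zeta-z} + \tfrac{1}{1-\zeta}$, produces
\[
S_{\mathbb{R}}(\phi_k)(t) = S_{\mathbb{T}}(\Phi_k)(z) + \frac{1}{\pi i}\int_{\mathbb{T}} \frac{\Phi_k(\zeta)}{1-\zeta}\,d\zeta.
\]
Because $\Phi_k(1) = 0$ the kernel $\Phi_k(\zeta)/(\zeta-1)$ is H\"older continuous on $\mathbb{T}$, so $\lim_{z \to 1} S_{\mathbb{T}}(\Phi_k)(z)$ exists and exactly cancels the added constant. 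Plemelj--Privalov then gives $S_{\mathbb{T}}(\Phi_k) \in H_\nu(\mathbb{T})$, and combining these two facts shows the right-hand side is $O(|z-1|^\nu)$ as $z \to 1$. Since $|1-z| \asymp 2/|t|$ as $|t| \to \infty$, this is precisely $S_{\mathbb{R}}(\phi_k)(t) = O(|t|^{-\nu})$, which combined with step one completes the proof.

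The only genuinely technical points are the change-of-variables computation relating $S_{\mathbb{R}}$ to $S_{\mathbb{T}}$ up to an explicit constant, and the bookkeeping verifying that the correction integrals in the first step all vanish by integration by parts using the polynomial-rate decay of $D^m\phi$. Both are mechanical once the decay rates are recorded; the essential analytic content is carried by the Plemelj--Privalov theorem, whose hypotheses Lemma \ref{PhikHolder} has already arranged.
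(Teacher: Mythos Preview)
Your proposal is correct and follows essentially the same route as the paper: reduce $t^k D_t^k S_{\mathbb{R}}\phi$ to $S_{\mathbb{R}}(\phi_k)$ via the algebraic identity $t^k = \tau^k + (t-\tau)\sum t^j\tau^{k-1-j}$ together with integration by parts, then transfer to the circle by $z = (t-i)/(t+i)$ to obtain $(S_{\mathbb{T}}\Phi_k)(z) - (S_{\mathbb{T}}\Phi_k)(1)$ and conclude by boundedness of $S_{\mathbb{T}}$ on $H_\nu(\mathbb{T})$. One small remark: your aside that ``$\Phi_k(\zeta)/(\zeta-1)$ is H\"older continuous'' is not literally true (it is only $O(|\zeta-1|^{\nu-1})$), but you do not actually need it---the existence of $(S_{\mathbb{T}}\Phi_k)(1)$ and the estimate $|(S_{\mathbb{T}}\Phi_k)(z) - (S_{\mathbb{T}}\Phi_k)(1)| = O(|z-1|^\nu)$ both follow directly from Plemelj--Privalov, which you invoke anyway.
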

\begin{proof}
By definition, for $k=0,1, \dots, r+2$,
\begin{align*}
t^k D^k_t S_\mathbb{R} \phi(t) & := \dfrac{t^k}{\pi i} D^k_t \int^\infty_{-\infty} \dfrac{\phi(\tau)}{\tau-t} \, d\tau\\
&= \dfrac{t^k}{\pi i}  \int^\infty_{-\infty} \dfrac{(D^k_\tau \phi)(\tau)}{\tau-t} \, d\tau \quad (\text{see}  \text{ Chapter I, Section 4.4 p.\,31} \, \cite{Ga}) \\
& = \dfrac{1}{\pi i} \int^\infty_{-\infty} \dfrac{\phi_k(\tau)}{\tau-t} \, d\tau, 
\end{align*}
where, in the last step, we use the identity
\begin{equation*}
t^k = \tau^k + (t - \tau) (t^{k-1} + t^{k-2} \tau + \dots + t \tau^{k-2} + \tau^{k-1}) ,
\end{equation*}
and repeated integration by parts.\\

For any $t \in \mathbb{R}$ we define the change of variable
\begin{equation*}
z:= \dfrac{t-i}{t+i} \quad \bigg (\text{or equivalently } t= i\dfrac{1+z}{1-z} \bigg ),
\end{equation*}
where, of course, $z \in \mathbb{T}$. Note that, as $t \to \pm \infty$ we have $z \to 1$. As previously, we define
\begin{equation*}
\Phi_k(z) := \phi_k \bigg( i\dfrac{1+z}{1-z} \bigg).
\end{equation*}
By Lemma \ref{PhikHolder},  $\Phi_k \in H_\nu(\mathbb{T})$ with $\Phi_k(1)=0$, for $k=0,1, \dots, r+1$.  \\

With this change of variable,
\begin{align*}
t^k D^k_t S_\mathbb{R} \phi(t) & = \dfrac{1}{\pi i} \int^\infty_{-\infty} \dfrac{\phi_k(\tau)}{\tau-t} \, d\tau \\
& =  \dfrac{1}{\pi i} \int_{|w|=1} \dfrac{1-z}{1-w} \dfrac{\Phi_k(w)}{w-z} \, dw \\
& =  \dfrac{1}{\pi i} \int_{|w|=1} \dfrac{\Phi_k(w)}{w-z} \, dw - \dfrac{1}{\pi i} \int_{|w|=1} \dfrac{\Phi_k(w)}{w-1} \, dw \\
& = (S_\mathbb{T} \Phi_k)(z) - (S_\mathbb{T} \Phi_k)(1).
\end{align*}
But the operator $S_\mathbb{T}$ is bounded on $H_\nu(\mathbb{T})$, and hence
\begin{equation*}
t^k D^k_t S_\mathbb{R} \phi(t) = O(|z-1|^\nu) = O(|t|^{-\nu}),
\end{equation*}
as $t \to \pm \infty \,\, (z \to 1)$. This completes the proof of the lemma. \\
\end{proof}

Our first task is to obtain some asymptotic estimates for the non-diagonal elements of $A^\pm_1$, Due to the similarity of the calculations, it is enough to prove this result for the matrix $A^+_1$. For brevity, we will ignore any constant terms that do affect the proof. \\

\begin{lemma} \label{lemmatAlog}
Suppose $1 \leq j,k \leq N$ with $j \not = k$. Then
\begin{equation*} 
D^q_t (A^\pm_1)_{j,k}(\omega,t) = O(|t|^{- \sigma -q }),
\end{equation*}
for $q=0,1, \dots, r+2$, and some $\sigma >0$. 
\end{lemma}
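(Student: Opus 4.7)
The plan is to analyze the off-diagonal entries of $A^\pm_1$ via the definition \eqref{ApmD1definition}. Treating the $+$ case only (the $-$ case is symmetric, as noted in the preamble), since $(t+i)^{\zeta}$ is diagonal we have, for $j \neq k$,
\[
(A^+_1)_{j,k}(\omega, t) = (t+i)^{\zeta_j - \zeta_k}\,(A^*_+)_{j,k}(\omega, t).
\]
The first factor grows at most like $|t|^{\operatorname{Re}(\zeta_j - \zeta_k)}$ with $|\operatorname{Re}(\zeta_j - \zeta_k)| < 1$ by Remark \ref{delta0defn}, and its $l$-th derivative behaves like $|t|^{\operatorname{Re}(\zeta_j - \zeta_k) - l}$. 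So the task reduces to showing that each entry $(A^*_+)_{j,k}$ and its first $r+2$ derivatives decay at a rate strictly better than the growth rate of $(t+i)^{\zeta_j - \zeta_k}$.

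The first real step is to record the element-wise decay of $A^*_0 - I$ already visible in the proof of Theorem \ref{MainResultCase3}. Inspecting \eqref{Firstterm} and \eqref{First+Second} at the level of individual entries, and setting $\nu_{jk} := 1 + \operatorname{Re}\zeta_j - \operatorname{Re}\zeta_k > 0$, we obtain
\[
t^q D^q_t\bigl[(A^*_0)_{j,k}(\omega,t) - \delta_{j,k}\bigr] = O\bigl(|t|^{-\nu_{jk}}\bigr), \quad q = 0,1,\ldots,r+3.
\]
The second step is to transfer this decay to the factors $A^*_\pm - I$ using the splitting structure of $W^{r+2}(\mathbb{R})$. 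The Wiener--Hopf construction builds $A^*_\pm - I$ by applying the projections $\Pi^\pm = \tfrac{1}{2}(I \pm S_{\mathbb{R}})$ to $A^*_0 - I$, together with finitely many products and inverses needed to absorb the diagonal factor $D$ (which arises when $\kappa(\omega) \neq 0$) and the nonlinearity of the factorization identity \eqref{Astar0Factorization}. Lemma \ref{tkDkSR} applied entry-wise shows that $S_{\mathbb{R}}$, hence $\Pi^\pm$, preserves the estimate $t^q D^q_t \phi = O(|t|^{-\nu})$ at the cost of one derivative order. Following the bookkeeping of Duduchava \cite{Du} through each product and inversion, each off-diagonal entry inherits a bound
\[
t^q D^q_t (A^*_+)_{j,k}(\omega,t) = O\bigl(|t|^{-\nu'_{jk}}\bigr), \quad q = 0,1,\ldots,r+2,
\]
for some $\nu'_{jk} > 0$ (possibly smaller than $\nu_{jk}$, but strictly positive). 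The final step is routine Leibniz bookkeeping:
\[
D^q_t (A^+_1)_{j,k} = \sum_{l=0}^{q} \binom{q}{l} c_l\,(t+i)^{\zeta_j - \zeta_k - l}\,D^{q-l}_t (A^*_+)_{j,k},
\]
and each summand is $O\bigl(|t|^{\operatorname{Re}(\zeta_j - \zeta_k) - l - \nu'_{jk} - (q-l)}\bigr) = O\bigl(|t|^{-\sigma_{jk} - q}\bigr)$ with $\sigma_{jk} := \nu'_{jk} - \operatorname{Re}(\zeta_j - \zeta_k) > 0$. Setting $\sigma := \min_{j \neq k} \sigma_{jk}$ gives the claim.

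The main obstacle is the transfer from $A^*_0$ to $A^*_\pm$: Lemma \ref{tkDkSR} furnishes only the \emph{linear} step through $\Pi^\pm$, whereas the Wiener--Hopf factorization is nonlinear. Tracking which entries retain strictly positive decay after multiplication by other factors (and by $D$ when $\kappa(\omega) \neq 0$), and ensuring that the final residual decay $\nu'_{jk}$ is not swallowed by the growth $|t|^{\operatorname{Re}(\zeta_j - \zeta_k)}$, is the essential content of Duduchava's analysis and where the care has to be spent.
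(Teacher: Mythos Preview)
Your overall framework is right — write $(A^+_1)_{j,k} = (t+i)^{\zeta_j-\zeta_k}(A^*_+)_{j,k}$, feed the entry-wise decay of $A^*_0-I$ through $\Pi^+$ via Lemma~\ref{tkDkSR}, then Leibniz — and you correctly flag that the passage from $A^*_0$ to $A^*_\pm$ is the crux. But at that crux you only gesture (``following the bookkeeping of Duduchava'') and then simply assert $\sigma_{jk}=\nu'_{jk}-\operatorname{Re}(\zeta_j-\zeta_k)>0$. Nothing in what you have written pins $\nu'_{jk}$ down tightly enough to justify that inequality when $\operatorname{Re}(\zeta_j-\zeta_k)\geq 0$; a merely positive $\nu'_{jk}$ is not enough, since $\operatorname{Re}(\zeta_j-\zeta_k)$ can be arbitrarily close to $1$. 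Moreover, describing the construction as ``applying $\Pi^\pm$ to $A^*_0-I$ together with finitely many products and inverses'' is not how the factors actually arise, and gives no handle on the precise decay exponent.

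The paper closes this gap with one algebraic observation that you are missing. Once $A^*_\pm\in W^{r+2}_{N\times N}$ are known to exist (by Theorem~\ref{thm:ralg}), the factorization identity \eqref{Astar0Factorization} can be rearranged to
\[
A^*_+ - A^*_- \;=\; \Bigl[\operatorname{diag}\Bigl(\tfrac{t+i}{t-i}\Bigr)^{\kappa(\omega)}-I\Bigr]A^*_- \;+\; \operatorname{diag}\Bigl(\tfrac{t+i}{t-i}\Bigr)^{\kappa(\omega)}A^*_-\,(A^*_0-I).
\]
The right-hand side is now a \emph{linear} expression in quantities whose decay is already known: the first summand has $D^q$ of order $|t|^{-q-1}$, and in the second, the $(j,k)$ entry picks up precisely the rate $|t|^{-q-\operatorname{Re}(\zeta_j-\zeta_k)-\delta_0+\epsilon}$ from the column index $k$ in \eqref{FirsttermD}. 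Since $A^*_\pm-I$ are $\pm$-functions vanishing at infinity, $\Pi^+(A^*_+-A^*_-)=(A^*_+ - I)$, and Lemma~\ref{tkDkSR} transfers the decay through $\Pi^+$ to give $D^q_t(A^*_+)_{j,k}=O(|t|^{-\operatorname{Re}(\zeta_j-\zeta_k)-(\delta_0-\epsilon)-q})$. This yields $\sigma=\delta_0-\epsilon>0$ uniformly in $j,k$, which is exactly the quantitative control your argument lacks. (When $\operatorname{Re}(\zeta_j-\zeta_k)<0$ the paper observes the result is immediate from $A^*_+\in W^{r+2}$, with no need for this identity.)
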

\begin{proof}
 We begin by noting that, from the definition of $A^+_1$,
\begin{equation*}
(A^+_1)_{j,k} = (t+i)^{\zeta_j-\zeta_k} (A^*_+)_{j,k}. 
\end{equation*} 

Firstly, we suppose that $\operatorname{Re} \, (\zeta_j - \zeta_k) < 0$. Then, in this case we can simply take 
\begin{equation*}
\sigma = - \operatorname{Re} \, (\zeta_j - \zeta_k),
\end{equation*}
so that $\sigma > 0$. Since $(A^*_+)_{j,k} \in W^{r+2}(\mathbb{R})$, the required result follows immediately.  We note that, taking $q=0$,
\begin{equation} \label{limtAlog}
\lim_{t \to \pm \infty} (A^+_1)_{j,k}(\omega,t)  = 0. 
\end{equation} 

Secondly, suppose that $j \not = k$ and $\operatorname{Re} \, (\zeta_j - \zeta_k) \geq 0$. From equation \eqref{Astar0Factorization}
\begin{align*}
A^*_+ - A^*_- & = \text{diag } \bigg ( \dfrac{t+ i}{t-i}\bigg)^{\kappa(\omega)} A^*_- A^*_0 - A^*_- \\
& = \bigg [  \text{diag } \bigg ( \dfrac{t+ i}{t-i}\bigg)^{\kappa(\omega)} - I \bigg] A^*_- + \text{diag } \bigg ( \dfrac{t+ i}{t-i}\bigg)^{\kappa(\omega)} (A^*_- A^*_0 - A^*_-) \\
& = \bigg [  \text{diag } \bigg ( \dfrac{t+ i}{t-i}\bigg)^{\kappa(\omega)} - I \bigg] A^*_- + \text{diag } \bigg ( \dfrac{t+ i}{t-i}\bigg)^{\kappa(\omega)} A^*_- (A^*_0 - I) \\
&:= b_1(t) + b_2(t) (A^*_0- I).
\end{align*}

Now consider $b_1(t) := \bigg [  \text{diag } \bigg ( \dfrac{t+ i}{t-i}\bigg)^{\kappa(\omega)} - I \bigg] A^*_-(\omega,t)$. We note that, as $t \to \pm \infty$,
\begin{align*}
D^q_t \bigg [  \text{diag } \bigg ( \dfrac{t+ i}{t-i}\bigg)^{\kappa(\omega)} - I \bigg] & = O(|t|^{-q -1}) \\
D^q_t [A^*_-(\omega,t)] &= O(|t|^{-q}) \quad \text{(since  } A^*_- \in W^{r+2}_{N \times N} (\mathbb{R})),
\end{align*}
for $q=0,1, \dots, r+2$. Hence, as $t \to \pm \infty$,
\begin{equation} \label{Dqtb1}
D^q_t b_1(t) = O( |t|^{-q-1}). \\
\end{equation} 

We now consider the second term,
\begin{equation*}
\big [b_2(A^*_0-I) \big ]_{j,k} = \sum^N_{s=1} (b_2)_{j,s}(t) (A^*_0(\omega,t) - I)_{s,k}(t)
\end{equation*}
where, by definition, $b_2(t) := \bigg ( \dfrac{t+ i}{t-i}\bigg)^{\kappa(\omega)} A^*_-(\omega,t)$. Since $A^*_- \in W^{r+2}_{N \times N}(\mathbb{R})$ we immediately have 
\begin{equation} \label{Dqtb2}
D^q_t (b_2)(t) = O(|t|^{-q}).
\end{equation}
Moreover, from estimates \eqref{FirsttermD} and \eqref{SecondtermD}
\begin{equation} \label{DqtA0I}
D^q_t (A^*_0 - I)_{s,k} = O(|t|^{-q - \text{Re} \, \zeta_s + \text{Re} \, \zeta_k + \epsilon - 1}).
\end{equation}
where $\epsilon$ is an arbitrarily small positive number that takes account of the logarithmic terms in the matrices $B_\pm(t)$ used in the construction of $A^*_0$. (See \eqref{A*lim}.)  Using estimates \eqref{Dqtb2} and \eqref{DqtA0I}
\begin{align} \label{Dqtb2A0}
D^q_t \big [b_2(A^*_0-I)] \big ]_{j,k} &= \sum^N_{s=1} O(|t|^{-q - \text{Re} \, \zeta_s + \text{Re} \, \zeta_k + \epsilon - 1}) \nonumber \\
&= \sum^N_{s=1} O(|t|^{\text{Re} \, (\zeta_k-\zeta_j) -q + \epsilon - \{ \text{Re} \, (\zeta_s -  \zeta_j)  + 1) \} }) \nonumber \\
&= O(|t|^{- \text{Re} \, (\zeta_j-\zeta_k) - q + \epsilon - \delta_0}).
\end{align}

Let $\nu = \text{Re} \, (\zeta_j - \zeta_k) + \delta_0 - \epsilon$. By assumption, $\text{Re} \, (\zeta_j - \zeta_k) \geq 0$ and hence we can choose any $\epsilon$ such that $0< \epsilon < \delta_0$ to ensure that $\nu >0$. Moreover, $\text{Re} \, (\zeta_j - \zeta_k) + \delta_0 \leq 1$, and hence $\nu < 1$.\\

Combining estimates \eqref{Dqtb1} and \eqref{Dqtb2A0}
\begin{equation*}
D^q_t (A^*_+ - A^*_-)_{j,k}(\omega,t) =O(|t|^{-\nu - q}),
\end{equation*}
for $q=0,1, \dots, r+2$, where $0< \nu < 1$. \\

For fixed $\omega$, we can now apply Lemma \ref{tkDkSR} with 
\begin{equation*}
\phi(t) = (A^*_+ - A^*_-)_{j,k}(\omega,t).
\end{equation*}
Let $\sigma := \delta_0 - \epsilon$. Then 
\begin{align*} 
D^q_t (A^*_+)_{j,k}(\omega,t) &= D^q_t \tfrac{1}{2}(I + S_\mathbb{R}) (A^*_+ - A^*_-)_{j,k}(\omega,t)  \\
& = O(|t|^{-\operatorname{Re} \, (\zeta_j - \zeta_k) - \sigma -q }).
\end{align*}
So, finally
\begin{equation} \label{DqtA+1}
D^q_t (A^+_1)_{j,k}(\omega,t) = O(|t|^{- \sigma -q }),
\end{equation}
for $q=0,1, \dots, r+1$, and $\sigma >0$. 
\end{proof}

\begin{lemma} \label{Apm1jnotk}
Suppose $1 \leq j,k \leq N$ with $j \not = k$. Then
\begin{equation*}
\lim_{t \to \pm \infty} (A^\pm_1(\omega,t))_{j,k} = 0.
\end{equation*}
\end{lemma}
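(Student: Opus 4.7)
The plan is to deduce the result directly from the immediately preceding Lemma \ref{lemmatAlog}. That lemma establishes, for every pair of indices $j \not = k$, the estimate
\begin{equation*}
(A^\pm_1)_{j,k}(\omega,t) = O(|t|^{-\sigma}) \quad \text{as } |t| \to \infty,
\end{equation*}
for some $\sigma > 0$ (taking $q=0$ in the stated decay). Since $\sigma > 0$, this $O$-bound forces $(A^\pm_1(\omega,t))_{j,k} \to 0$ as $t \to \pm \infty$, which is exactly the claim.

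A useful sanity check is to look at the two regimes separately, matching the case-split in the proof of Lemma \ref{lemmatAlog}. If $\operatorname{Re}(\zeta_j - \zeta_k) < 0$, then writing $(A^\pm_1)_{j,k} = (t \pm i)^{\zeta_j - \zeta_k} (A^*_\pm)_{j,k}$, the prefactor already tends to zero and $(A^*_\pm)_{j,k}$ is bounded (it lies in $W^{r+2}(\mathbb{R})$, and indeed $\lim_{t \to \pm\infty}A^*_\pm = I$ by \eqref{limAstarpm}). If instead $\operatorname{Re}(\zeta_j - \zeta_k) \geq 0$, then the prefactor does not decay, and we genuinely need the refinement obtained in Lemma \ref{lemmatAlog} via the commutator argument through $S_{\mathbb{R}}$: there it is shown that $(A^*_\pm)_{j,k}$ itself decays like $|t|^{-\operatorname{Re}(\zeta_j - \zeta_k) - \sigma}$, so the product still tends to zero.

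There is no real obstacle here; the lemma is essentially a corollary of Lemma \ref{lemmatAlog}. The only thing worth emphasizing in the write-up is that the case $\operatorname{Re}(\zeta_j - \zeta_k) \geq 0$ is the nontrivial one, where the vanishing of the limit is not visible from the explicit prefactor $(t \pm i)^{\zeta_j - \zeta_k}$ alone but relies on the extra decay $\sigma = \delta_0 - \epsilon > 0$ (cf.\ \eqref{delta0}) extracted from the bound on $A^*_+ - A^*_-$ in the proof of Lemma \ref{lemmatAlog}.
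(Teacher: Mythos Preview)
Your proposal is correct and matches the paper's own proof essentially verbatim: the paper simply cites the two estimates \eqref{limtAlog} and \eqref{DqtA+1} from Lemma~\ref{lemmatAlog} (the first for the case $\operatorname{Re}(\zeta_j-\zeta_k)<0$, the second with $q=0$ for the complementary case) and concludes. Your sanity-check discussion of the two regimes is exactly the content behind those two references.
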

\begin{proof}
The proof of the lemma follows directly from the estimates \eqref{limtAlog} and \eqref{DqtA+1}. (Of course, on using estimate \eqref{DqtA+1} we take $q=0$.) \\
\end{proof}

\begin{remark} \label{A1minusI}
From equation \eqref{limAstarpm}, $\lim_{t \to \pm \infty} (A^*_\pm)_{j,j} =1$, and hence
\begin{equation*}
\lim_{t \to \pm \infty} (A^\pm_1(\omega, t))_{j,j} = 1, \quad 1 \leq j \leq N.
\end{equation*}
Thus, the proof of Lemma \ref{lemmatAlog} can readily be extended to obtain (c.f. \eqref{First+Second})
\begin{equation*}
D^q_t (A^\pm_1(\omega,t) - I) = O(|t|^{-\sigma - q})
\end{equation*}
for $q=0,1, \dots, r+1$ and $\sigma > 0$. \\
\end{remark}

\begin{lemma} \label{Apm2inWr}
Suppose $1 \leq j,k \leq N$. Let
\begin{equation*}
A^\pm_2(\omega,t) = B^{-1}_\pm(t) (A^\pm_1(\omega,t) - I) B_\pm(t) + I 
\end{equation*} 
Then
\begin{equation*}
 (A^\pm_2(\omega,t))_{j,k} \in W^r(\mathbb{R}).
\end{equation*}
\end{lemma}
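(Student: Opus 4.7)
The plan is to expand $(A^\pm_2 - I)_{j,k}$ as a finite sum of products of polynomial expressions in $\log(t\pm i)$ with entries of $(A^\pm_1 - I)$, and then combine the decay estimates of Remark \ref{A1minusI} with the $W^r$-membership criterion of Lemma \ref{Wrtest}.

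First I would exploit the algebraic structure of $B_\pm(t)$. By \eqref{Bpmdef} and the definition of $B^m(z)$, $B_\pm(t)$ is block diagonal with blocks $B^{m_k}((2\pi i)^{-1}\log(t\pm i))$, each of which is a polynomial matrix in $(2\pi i)^{-1}\log(t\pm i)$ of degree at most $m_k - 1$. The identity \eqref{B-a} gives $B^m(z)^{-1} = B^m(-z)$, so $B_\pm^{-1}(t)$ has the identical structure. Writing out $A^\pm_2 - I = B^{-1}_\pm (A^\pm_1 - I) B_\pm$ entrywise therefore yields
\begin{equation*}
(A^\pm_2(\omega,t) - I)_{j,k} = \sum_{j',k'} P^\pm_{j,k;j',k'}\bigl(\log(t\pm i)\bigr) \, (A^\pm_1(\omega,t) - I)_{j',k'},
\end{equation*}
where each $P^\pm_{j,k;j',k'}$ is a polynomial whose degree is bounded by the Jordan block sizes $m_1,\dots,m_l$.

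Next I would obtain pointwise derivative estimates. Fixing any $q$ with $0 \le q \le r+1$ and applying the Leibniz rule, each differentiation of a factor $[\log(t\pm i)]^s$ produces terms of the form $[\log(t\pm i)]^{s'}(t\pm i)^{-p}$ with $s' \le s$ and $p \ge 1$, while Remark \ref{A1minusI} supplies
\begin{equation*}
D_t^{q'}(A^\pm_1(\omega,t) - I)_{j',k'} = O(|t|^{-\sigma - q'}), \qquad 0 \le q' \le r+1,
\end{equation*}
for some $\sigma > 0$. Since $\log(t\pm i) = O(\log|t|) = o(|t|^\epsilon)$ as $|t| \to \infty$ for every $\epsilon > 0$, combining these bounds gives
\begin{equation*}
D_t^q (A^\pm_2(\omega,t) - I)_{j,k} = O\bigl(|t|^{-\sigma - q + \epsilon}\bigr), \qquad 0 \le q \le r+1,
\end{equation*}
for any $\epsilon > 0$. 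Choosing $\epsilon = \sigma/2$ yields $D_t^q(A^\pm_2 - I)_{j,k} = O(|t|^{-q - \sigma/2})$ on the whole range.

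Finally, since $A^*_\pm \in W^{r+2}_{N\times N}(\mathbb{R})$ embeds in $C^{r+1}(\mathbb{R})$ and $\log(t\pm i)$ is smooth on $\mathbb{R}$, the entry $(A^\pm_2 - I)_{j,k}$ belongs to $C^{r+1}(\mathbb{R})$. Applying Lemma \ref{Wrtest} with $\delta = \sigma/2$ then concludes $(A^\pm_2 - I)_{j,k} \in W^r(\mathbb{R})$, and hence $(A^\pm_2(\omega,t))_{j,k} \in W^r(\mathbb{R})$. The one point demanding care is the combinatorial bookkeeping in the Leibniz expansion, ensuring the $o(|t|^\epsilon)$ absorption of the logarithmic factors holds uniformly for $0 \le q \le r+1$; once this is in place, the conclusion is an essentially immediate consequence of Remark \ref{A1minusI} and Lemma \ref{Wrtest}.
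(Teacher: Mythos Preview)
Your proposal is correct and follows essentially the same route as the paper: invoke Remark~\ref{A1minusI} for the decay of $A^\pm_1 - I$, absorb the logarithmic factors coming from $B_\pm^{\pm 1}$ by passing from $\sigma$ to a strictly smaller exponent, and then apply Lemma~\ref{Wrtest}. The paper compresses your polynomial-in-$\log(t\pm i)$ bookkeeping into the single line ``from the definition of $A^\pm_2$, $D^q_t(A^\pm_2 - I) = O(|t|^{-\sigma'-q})$ for any $0<\sigma'<\sigma$'', but the content is the same.
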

\begin{proof}
From Remark \ref{A1minusI} 
\begin{equation*}
D^q_t (A^\pm_1(\omega,t) - I) = O(|t|^{-\sigma - q}),
\end{equation*}
for $\sigma > 0$. Hence, from the definition of $A^\pm_2$,
\begin{equation*}
D^q_t (A^\pm_2(\omega,t) - I) = O(|t|^{-\sigma' - q}),
\end{equation*}
for $q=0,1, \dots, r+1$ and any $\sigma'$ such that $0< \sigma' < \sigma$. The required result now follows from Lemma \ref{Wrtest}.
\end{proof}

\newpage

\end{document}